\theoremstyle{plain}
\numberwithin{equation}{section}
\newtheorem{theorem}{Theorem}[section]
\newtheorem{proposition}[theorem]{Proposition}
\newtheorem{corollary}[theorem]{Corollary}
\newtheorem{lemma}[theorem]{Lemma}
\newtheorem{conjecture}[theorem]{Conjecture}
\theoremstyle{definition}
\newtheorem{definition}[theorem]{Definition}
\newtheorem{remark}[theorem]{Remark}
\def\rank{\mathop{\mathrm{rank}}\nolimits}
\def\dim{\mathop{\mathrm{dim}}\nolimits}
\def\Ker{\mathop{\mathrm{Ker}}\nolimits}
\def\Hom{\mathop{\mathrm{Hom}}\nolimits}
\def\<{{\langle}}
\def\>{{\rangle}}
\def\Aut{\mathop{\mathrm{Aut}}\nolimits}
\def\Stab{\mathop{\mathrm{Stab}}\nolimits}
\def\Stabd{\mathop{\mathrm{Stab}^{\dagger}}\nolimits}
\def\tGL{\mathop{\widetilde{\mathrm{GL}}}\nolimits}
\def\+{\mathop{\oplus}\nolimits}
\def\Bigast{\mathop{\mbox{\huge $\ast$}}\nolimits}
\DeclareMathOperator*{\bigast}{\Bigast}
\newcommand{\mf}[1]{{\mathfrak{#1}}}
\newcommand{\bb}[1]{{\mathbb{#1}}}
\newcommand{\mca}[1]{{\mathcal{#1}}}
\newcommand{\mr}[1]{{\mathrm{#1}}}
\title[Hyperbolic metric and stability conditions]{A hyperbolic metric and stability conditions on K3 surfaces with $\rho=1$}
\author{Kotaro Kawatani}
\date{\today, version 5}
\address{Department of Mathematics, 
Nagoya University, 
Furoch, Chikusaku, Nagoya, Japan}
\email{kawatani@math.nagoya-u.ac.jp}
\subjclass[2010]{Primary 14F05; Secondly 14J28, 18E30, 32Q45}
\begin{document}
\maketitle

\section{Introduction}

In this article we introduce a hyperbolic metric on the (normalized) space of stability conditions on projective K3 surfaces $X$ with Picard rank $\rho (X) =1$. 
And we show that all walls are geodesic in the normalized space with respect to the hyperbolic metric. 
Furthermore we demonstrate how the hyperbolic metric is helpful for us by discussing mainly three topics. 
We first make a study of so called Bridgeland's conjecture. 
In the second topic we prove a famous Orlov's theorem without the global Torelli theorem. 
In the third topic we give an explicit example of stable complexes in large volume limits by using the hyperbolic metric. 
Though Bridgeland's conjecture may be well-known for algebraic geometers, we would like to start from the review of it.

\subsection{Bridgeland's conjecture}

In \cite{Bri07} Bridgeland introduced the notion of \textit{stability conditions} on arbitrary triangulated categories $\mca D$. 
By virtue of this we could define the notion of ``$\sigma$-stability'' for objects $E \in \mca D$ with respect to a stability condition $\sigma$ on $\mca D$.

Bridgeland also showed that each connected component of the space $\Stab (\mca D)$ consisting of stability conditions on $\mca D$ is a complex manifold 
unless $\Stab (\mca D)$ is empty.  
Hence the non-emptiness of $\Stab (\mca D)$ is one of the biggest problem. 
Many researchers study this problem in various situations. 
For instance suppose $\mca D$ is the bounded derived category $D(M)$ of coherent sheaves on a projective manifold $M$. 
In the case of $\dim M=1$, the non-emptiness of $\Stab(D(M))$ was proven in the original article \cite{Bri07}. 
Furthermore the space $\Stab(D(M))$ was studied in detail by \cite{Oka06} (the genus is $0$), \cite{Bri07} (the genus is $1$) and \cite{Mac07} (the genus is greater than $1$). 
In the case of $\dim M=2$, the non-emptiness was proven by \cite{Bri} (K3 or abelian surfaces) and \cite{ABL07} (other surfaces). 
In the case of $\dim M=3$ it is discussed by \cite{BMT11}. 
These are just a handful of many studies.

As we stated before, the space $\Stab (X)$ of stability conditions on the derived category $D(X)$ of a projective K3 surface $X$ is not empty by \cite{Bri}. 
This fact is proven by finding a distinguished  connected component $\Stabd (X)$. 
For $\Stabd (X)$
Bridgeland conjectured the following:

\begin{conjecture}[Bridgeland]\label{CONJ}
The space $\Stab (X)$ is connected, that is, $\Stab(X)= \Stabd(X)$. 
Furthermore the distinguished component $\Stabd(X)$ is simply connected. 
\end{conjecture}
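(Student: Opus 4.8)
The plan is to deduce both assertions from a close analysis of the \emph{normalized} stability space, which for $\rho(X)=1$ is only two real dimensions and, by the construction recalled above, is a connected surface carrying a complete hyperbolic metric all of whose walls are geodesics. The starting point is that, since $\rho(X)=1$, the algebraic Mukai lattice $\mca{N}(X)$ has signature $(2,1)$; hence the period domain $\mca{P}_0^+(X)$ of admissible central charges, divided by the $\tGL^+(2,\bb R)$-action, is identified with the upper half-plane $\bb H$ minus the set $\Delta$ of points at which the central charge vanishes on a spherical class $v$ with $\<v,v\>=-2$, and $\Delta$ is discrete in $\bb H$. By Bridgeland's theorem the forgetful map $\Stabd(X)\to\mca{P}_0^+(X)$ is a covering onto its (open) image, so after normalization it becomes a covering $p\colon\Sigma\to\bb H\setminus\Delta$, where $\Sigma$ is the normalized distinguished component. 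I would then reduce simple connectedness to showing $p$ is the universal covering, and connectedness to showing that every stability condition already lies in $\Stabd(X)$.

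\textbf{Simple connectedness.} Because every wall of $\Sigma$ is a geodesic of the hyperbolic metric, the walls cut $\Sigma$ into chambers that are geodesically convex, hence contractible, so $\Sigma$ is homotopy equivalent to the nerve of its (locally finite) covering by closed chambers, which is a graph; it remains to see this graph is a tree. A cycle in it would be a finite closed chain of adjacent chambers enclosing a puncture of $\Sigma$, i.e.\ a point of $p^{-1}(\delta)$ for some $\delta\in\Delta$; the monodromy of $p$ around such a puncture is, up to shift, the spherical twist $\mr{ST}_{S_\delta}$ attached to the spherical object $S_\delta$ with $v(S_\delta)=v$. This autoequivalence acts on $\mca{N}(X)$ as the reflection in the hyperplane $v^\perp$ (in particular $v\mapsto -v$), which cannot preserve the nondegenerate central charge $Z_\sigma$ of any $\sigma$ lying over a point of $\bb H\setminus\Delta$, since there $Z_\sigma(v)\neq 0$; hence $\mr{ST}_{S_\delta}$ is fixed-point-free on $\Stabd(X)$, the monodromy around the puncture has infinite order, and the enclosing chain cannot close up. Therefore the nerve is a tree, $\Sigma$ is simply connected, $p$ is the universal covering, and since the $\tGL^+(2,\bb R)$-fibre over $\Sigma$ is contractible, $\Stabd(X)$ is simply connected.

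\textbf{Connectedness.} Let $\sigma\in\Stab(X)$ be arbitrary. Its central charge satisfies the support property (part of the definition used for K3 surfaces), so it does not vanish on any spherical class and defines, after the $\tGL^+(2,\bb R)$-action, a point $z\in\bb H\setminus\Delta$. The plan is to connect $\sigma$ to a geometric (large-volume) stability condition by a path of finite hyperbolic length whose projection is a geodesic from $z$ towards the large-volume cusp: the heart of $\sigma$ is controlled by a bounded torsion pair, along such a geodesic one meets only geodesic walls, each of which is a wall of $\Stabd(X)$, and the wall-and-chamber structure---locally finite with convex chambers---lets the deformation be carried through the whole way; the endpoint lies in $\Stabd(X)$, hence so does $\sigma$, and $\Stab(X)=\Stabd(X)$.

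\textbf{Main obstacle.} The genuine difficulty is this last step: a priori a second component of $\Stab(X)$ could cover $\bb H\setminus\Delta$ disjointly from $\Sigma$, and to rule this out one must classify, up to shift and the known autoequivalences, the hearts that occur in an arbitrary stability condition---equivalently, control how a stability condition degenerates as $z$ tends to $\partial\bb H$ or to a point of $\Delta$. The hyperbolic metric is exactly what makes deformations propagable (walls are geodesics, chambers are convex, and ``distance to the boundary and to $\Delta$'' becomes a usable quantity), but identifying the hearts near the boundary of $\bb H$ is the hard input, and is where the bulk of the work will go.
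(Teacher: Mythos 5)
You are attempting to prove Conjecture \ref{CONJ}, which in this paper is an \emph{open conjecture}: the author explicitly states that he could not prove the simple connectedness, and the paper's actual content (Theorem \ref{thm2}) is only an equivalence — $\Stabd(X)$ is simply connected if and only if $\Stab^{\mr{n}}(X)\cong\bb H$, if and only if $W(X)=\bigast_A(\bb Z\cdot T_A^2)$ is free. Any genuine proof must therefore settle precisely the point your argument glosses over: the injectivity of $\varphi\colon\pi_1(\mca P^+_0(X))\to\mr{Cov}(\pi)$, i.e.\ the absence of relations in $\Aut(D(X))$ among the squared spherical twists $T_A^2$ for distinct spherical sheaves $A$. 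Your proposal does not touch this.

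Concretely, the simple-connectedness argument breaks at three places. First, the geodesy of walls (Theorem \ref{thm3}) does not give a global, locally finite decomposition of $\Stab^{\mr{n}}(X)$ into geodesically convex chambers whose nerve is a graph: the wall-and-chamber structure is only defined relative to a bounded-mass family $\mca S$ and a compact $B$, chambers need not be convex, and the nerve of a covering of a surface by two-dimensional pieces is not a graph, so "the nerve is a tree" has no basis. Second, the monodromy of the covering around a puncture $p(\delta)$ is $T_A^2$, not $T_A$ (Proposition \ref{grouphom}), and $T_A^2$ acts \emph{trivially} on $\mca N(X)$; your claim that the monodromy acts as the reflection in $v^\perp$ and hence "cannot preserve the nondegenerate central charge" is false — deck transformations of $\pi$ preserve the central charge by definition, so no contradiction arises. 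Third, and decisively, even a correct proof that each individual monodromy has infinite order would say nothing about relations among monodromies around \emph{different} punctures; ruling those out is exactly the unproven freeness of $W(X)$, and this is where the conjecture lives. As for connectedness, your own "Main obstacle" paragraph concedes that nothing excludes a second component of $\Stab(X)$ covering $\mca P^+_0(X)$ disjointly from $\Stabd(X)$; the paper likewise assumes, never proves, this. So the proposal proves neither assertion of the statement.
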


As was proven by \cite{Bri} and \cite{HMS}, if the conjecture holds then we can determine the group structure of $\Aut (D(X))$ as follows:
We have the covering map $\pi \colon \Stabd (X) \to \mca P^+_0(X)$ by \cite[Theorem 1.1]{Bri} (See also Theorem \ref{covering}). 
Here $\mca P^+_0(X)$ is a subset of $H^*(X, \bb C)$ (See also Section \ref{termi}). 
By virtue of \cite{Bri} and \cite{HMS}, if Conjecture \ref{CONJ} holds we have the exact sequence of groups:
\begin{equation}
1 \to \pi _1 (\mca P^+_0(X)) \to \Aut (D(X)) \stackrel{\kappa}{\to} O^{+}_{\mr{Hodge}} (H^*(X, \bb Z)) \to  1, \label{exactseq} 
\end{equation}
where $O^{+}_{\mr{Hodge}} (H^*(X, \bb Z))$ is the Hodge isometry group of $H^*(X, \bb Z)$ preserving the orientation of $H^*(X,\bb Z)$. 
Hence Conjecture \ref{CONJ} predicts that the kernel $\Ker (\kappa)$ of the representation $\kappa$ is given by the fundamental group $\pi _1 (\mca P^+_0(X))$ and that $\Aut (D(X))$ is given by an extension of $\pi _1(\mca P^+_0(X))$ and $O^+_{\mr{Hodge}}(H^*(X, \bb Z))$.

\subsection{First theorem}

Recall the right $\tGL^+(2, \bb R) $-action on $\Stab (X)$ 
where $\tGL^+(2, \bb R)$ is the universal cover of $\mr{GL}^+(2, \bb R)$. 
We define $\Stab ^{\mr{n}}(X)$ by the quotient of $\Stabd (X)$ by the right $\tGL^+(2, \bb R)$ action. 
We call it a \textit{normalized stability manifold}. 
For a projective K3 surface with $\rho (X) =1$, 
we first introduce a hyperbolic metric on $\Stab ^{\mr{n}}(X)$. 
We also show that the hyperbolic metric is independent of the choice of Fourier-Mukai partners of $X :$

\begin{theorem}[=Theorem \ref{thm1}]
Assume that $\rho (X) =1$. 
\begin{itemize}
\item[(1)] $\Stab ^{\mr{n}} (X)$ is a hyperbolic 2 dimensional manifold. 
\item[(2)] Let $Y$ be a Fourier-Mukai partner of $X$ and $\Phi \colon D(Y) \to D(X)$ an equivalence which preserves the distinguished component $\Stabd(X)$. 
Then the induced morphism $\Phi _*^{\mathrm{n}} \colon \Stab ^{\mr{n}}(Y) \to \Stab ^{\mr{n}} (X)$ is an isometry with respect to the hyperbolic metric. 
\end{itemize}
\end{theorem}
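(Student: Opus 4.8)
The plan is to exploit the covering map $\pi \colon \Stabd(X) \to \mca P^+_0(X)$ from \cite[Theorem 1.1]{Bri} together with the known description of the $\tGL^+(2,\bb R)$-action, which acts freely and properly on $\Stabd(X)$. First I would recall that, by definition, a stability condition $\sigma = (Z, \mca P)$ determines a central charge $Z \in \Hom(H^*(X,\bb Z), \bb C)$ which, via the Mukai pairing, is identified with a vector $\Omega \in H^*(X,\bb C)$; when $\rho(X) = 1$ the relevant lattice $H^*(X,\bb Z)$ has signature $(2, \rho) = (2,3)$ and the algebraic part has signature $(2,1)$, so the projectivized positive cone $\mca P^+_0(X)$ is (a connected component of) the period domain of an $O(2,1)$-lattice. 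For part (1), I would show that quotienting by $\tGL^+(2,\bb R)$ on the source corresponds, under $\pi$, to quotienting by the scaling/rotation action $\bb C^* $ on $\Omega \in H^*(X,\bb C)$ together with the $\bb Z$-ambiguity from passing to the universal cover; the resulting quotient $\Stab^{\mr n}(X)$ then maps (via a covering) onto the space of positive-definite oriented $2$-planes in $H^*_{\mr{alg}}(X,\bb R) \otimes \bb R$ modulo the relevant structure, which is the symmetric space $O(2,1)/SO(2)$, i.e. the hyperbolic plane $\bb H^2$. Pulling back the hyperbolic metric from $\bb H^2$ along this covering (and checking it descends, i.e. is invariant under whatever deck group survives) equips $\Stab^{\mr n}(X)$ with a hyperbolic metric, proving (1). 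Concretely, in the upper-half-plane coordinates coming from a choice of ample generator, the central charge takes the familiar form $Z(\beta + i\omega)$ and the metric is the standard $\tfrac{d\beta^2 + d\omega^2}{\omega^2}$, which one checks is exactly the metric induced by the Mukai form on the period.

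For part (2), the key point is that any autoequivalence $\Phi$ acts on $\Stabd(X)$ (when it preserves this component) compatibly with the covering maps: there is an induced Hodge isometry $\Phi^H_* \colon H^*(Y,\bb Z) \to H^*(X,\bb Z)$ (the cohomological Fourier–Mukai transform), and the square relating $\pi_Y$, $\pi_X$, $\Phi_*$ on stability manifolds and $\Phi^H_*$ on periods commutes. Since the hyperbolic metric on each normalized stability manifold is, by construction, pulled back from the Mukai form on the period domain, and since $\Phi^H_*$ is by definition an isometry of the Mukai lattices (hence of the associated symmetric spaces $O(2,1)/SO(2)$), the induced map $\Phi^{\mr n}_*$ intertwines the two hyperbolic structures. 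The only thing to verify carefully is that $\Phi_*$ commutes with the $\tGL^+(2,\bb R)$-action — but this is immediate because $\Phi_*$ acts on $(Z,\mca P)$ by post-composing $Z$ with a fixed linear isomorphism and relabeling slicings, which commutes with the right action that modifies the $\bb R^2$-structure of $Z$. Hence $\Phi^{\mr n}_*$ is well-defined on the quotients and is an isometry.

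The main obstacle I anticipate is a bookkeeping one rather than a conceptual one: making the identification in part (1) between the $\tGL^+(2,\bb R)$-quotient and the period-domain quotient genuinely precise, including tracking the universal-cover $\bb Z$-factor and ruling out that $\Stab^{\mr n}(X)$ is merely a covering of $\bb H^2$ rather than $\bb H^2$ itself (or, if it is a proper quotient/cover, identifying exactly which one). This requires knowing that the restriction of $\pi$ is compatible with the respective group actions and that the image $\mca P^+_0(X)$, after the $\bb C^*$-quotient, is all of $\bb H^2$ — equivalently that every positive oriented $2$-plane arises from some stability condition in the distinguished component, which for K3 surfaces with $\rho = 1$ should follow from the explicit description of $\Stabd(X)$ in \cite{Bri} (the "geometric" stability conditions together with their $\tGL^+$-translates cover the relevant region). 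Once this structural identification is pinned down, both (1) and (2) follow formally, since everything reduces to the statement that a Hodge isometry of an $O(2,1)$-lattice induces an isometry of the associated hyperbolic plane.
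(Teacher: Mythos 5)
Your construction of the metric in part (1) --- transporting the Poincar\'e metric from the period--domain side through the covering --- is the same basic mechanism as the paper's, but the structural identification you build it on is wrong at a decisive point. You assert that after quotienting, the image of $\mca P^+_0(X)$ is ``all of $\bb H^2$'', i.e.\ that every oriented positive $2$-plane arises from a stability condition in the distinguished component. This is false: by definition $\mca P^+_0(X)=\mca P^+(X)\setminus\bigcup_{\delta\in\Delta(X)}\<\delta\>^{\perp}$, and under $\mca P^+(X)/\mr{GL}^+(2,\bb R)\cong\mf H(X)\cong\bb H$ the removed hyperplanes descend to the (nonempty) set of spherical points $p(\delta)$, so the base of the normalized covering is $\mf H_0(X)=\bb H\setminus\{p(\delta)\mid\delta\in\Delta(X)\}$, not $\bb H$. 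Worse, the issue you dismiss as ``bookkeeping'' --- ruling out that $\Stab^{\mr n}(X)$ is merely a covering of $\bb H^2$ rather than $\bb H^2$ itself --- is not bookkeeping: since $\bb H$ is simply connected, a connected covering of $\bb H$ is $\bb H$, so establishing your identification would prove $\Stab^{\mr n}(X)\cong\bb H$ and hence the simple connectedness of $\Stabd(X)$, which is precisely (part of) Conjecture \ref{CONJ} and is exactly what Theorem \ref{thm2} shows to be open. Theorem \ref{thm1}(1) needs much less: one pulls back the metric $ds^2=(dx^2+dy^2)/y^2$ along $\pi^{\mr n}\colon\Stab^{\mr n}(X)\to\mf H_0(X)$ (Corollary \ref{covering2}), and for this one needs to know that $\mf H_0(X)$ is open in $\bb H$, which requires the discreteness of the spherical points (Lemma \ref{key lemma}) --- a step your argument skips entirely because you have replaced the true base by all of $\bb H$. (Also note the metric is pulled \emph{up} the covering, so no deck-invariance check is needed, contrary to what you suggest.)

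For part (2) your route differs from the paper's and is viable in principle: rather than computing, as the paper does via Lemmas \ref{3.1} and \ref{3.2}, that the induced map $\mf H_0(Y)\to\mf H_0(X)$ is an explicit real fractional linear transformation (or a translation $z\mapsto z+n$ when $v(\Phi(\mca O_y))=\pm(0\+0\+1)$), you argue that the cohomological transform is an isometry of the signature-$(2,1)$ lattice and hence acts on the space of oriented positive $2$-planes as a hyperbolic isometry. To complete this you must actually carry out the ``one checks'' step that $(x,y)\mapsto\exp((x+\sqrt{-1}y)L)$ identifies the Poincar\'e metric with the invariant metric (the normalization involves $L^2=2d$, but $d$ is the same for $X$ and $Y$ since they are Fourier--Mukai partners, so this is harmless), and you must address orientation of positive planes --- in the paper this is where \cite{HMS} enters, though it can alternatively be extracted from the assumption that $\Phi_*$ preserves $\Stabd(X)$ together with the commuting square of covering maps. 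As written, however, part (1) rests on a false identification and on a claim equivalent to an open conjecture, so the proposal does not yet constitute a proof.
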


Clearly if $\Stab(X)$ is connected it is unnecessary to assume that $\Phi$ preserves the distinguished component. 

We remark that there is another study by Woolf which focuses on the metric on $\Stab (\mca D)$ (not normalized!). 
In \cite{Woo}, he showed that $\Stab (\mca D)$ is complete with respect to the original metric introduced by Bridgeland. 
Our study is the first work which focuses on a different structure from Bridgeland's original framework.

\subsection{Second theorem}

Next, by using the hyperbolic structure, we observe the simply connectedness of $\Stabd (X)$ :

\begin{theorem}[=Theorem \ref{thm2}]
Let $X$ be a projective K3 surface with $\rho (X)=1$. 
The following three conditions are equivalent. 
\begin{itemize}
\item[(1)] $\Stabd(X)$ is simply connected. 
\item[(2)] $\Stab ^{\mr{n}}(X)$ is isomorphic to the upper half plane $\bb H$. 
\item[(3)] Let $W(X)$ be the subgroup of $\Aut (D(X))$ generated by two times compositions of the spherical twist $T_A$ by spherical locally free sheaves $A$. 
$W(X)$ is isomorphic to the free group generated by $T_A^2 $:
\[
W(X)= \bigast_{A} (\bb Z\cdot T_A^2)  ,  
\]
where $A$ runs through all spherical locally free sheaves and $\bigast$ is the free product. 
\end{itemize}
\end{theorem}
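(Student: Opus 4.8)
The plan is to transport the question to Bridgeland's covering map and to phrase simple connectivity as a statement about a free group; I would first dispose of $(1)\Leftrightarrow(2)$. The right $\tGL^+(2,\bb R)$-action on $\Stabd(X)$ is free and proper, so $\Stabd(X)$ is a principal $\tGL^+(2,\bb R)$-bundle over $\Stab^{\mr n}(X)$; since $\mr{GL}^+(2,\bb R)$ is homotopy equivalent to $S^1$, its universal cover $\tGL^+(2,\bb R)$ is contractible, so this bundle is trivial over the paracompact base and the projection $\Stabd(X)\to\Stab^{\mr n}(X)$ is a homotopy equivalence. Hence $(1)$ is equivalent to $\Stab^{\mr n}(X)$ being simply connected. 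By Theorem~\ref{thm1}, $\Stab^{\mr n}(X)$ is a complete hyperbolic surface, hence a hyperbolic Riemann surface, and by the uniformization theorem a simply connected hyperbolic Riemann surface is biholomorphic, and isometric, to $\bb H$; conversely $\bb H$ is simply connected. This gives $(1)\Leftrightarrow(2)$.

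For $(2)\Leftrightarrow(3)$ I would use Bridgeland's covering $\pi\colon\Stabd(X)\to\mca P^+_0(X)$ of Theorem~\ref{covering}. It is equivariant for the $\tGL^+(2,\bb R)$- and $\mr{GL}^+(2,\bb R)$-actions, which are free and proper, so it descends to a covering map
\[
\pi^{\mr n}\colon\Stab^{\mr n}(X)\longrightarrow Y:=\mca P^+_0(X)/\mr{GL}^+(2,\bb R).
\]
Since $\rho(X)=1$, the algebraic lattice $\mca N(X)=H^0(X,\bb Z)\oplus\mr{NS}(X)\oplus H^4(X,\bb Z)$ has signature $(2,1)$, and $\mca P^+(X)/\mr{GL}^+(2,\bb R)$ is the space of oriented positive-definite $2$-planes in $\mca N(X)_{\bb R}$, which is a copy of $\bb H$; a $(-2)$-class $\delta$ contributes to this quotient a single removed point $p_{\delta}$ (the positive-definite plane $\delta^{\perp}$). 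Thus $Y=\bb H\setminus D$ with $D=\{p_{\delta}\}$. Every $(-2)$-class has nonzero rank, and the arithmetic of $\mca N(X)$ forces $D$ to be discrete in $\bb H$, accumulating only on $\partial\bb H$ (for $\delta=(r,dH,s)$ the equation $\delta^2=-2$ pushes $p_{\delta}$ toward $\partial\bb H$ as $|d|\to\infty$); hence $\pi_1(Y)$ is free, freely generated by small loops $\gamma_{\delta}$, one per $(-2)$-class up to sign. Moreover $(-2)$-classes up to sign correspond bijectively to spherical locally free sheaves $A$ — a $(-2)$-class of positive rank is the Mukai vector of a unique rigid $\mu$-stable sheaf, which is locally free because $\rho(X)=1$ — and I write $A_{\delta}$ for the sheaf attached to $\delta$.

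Next, each $T_A^2$ acts trivially on $H^*(X,\bb Z)$ and commutes with the $\tGL^+(2,\bb R)$-action, so it descends to an isometry of $\Stab^{\mr n}(X)$ lying over $\mathrm{id}_Y$; thus $W(X)$ acts on $\Stab^{\mr n}(X)$ by deck transformations of $\pi^{\mr n}$. The key input is a local analysis at each puncture, showing that near $p_{\delta}$ the wall-and-chamber structure of $\Stab^{\mr n}(X)$ is controlled by the spherical object $A_{\delta}$, the single twist $T_{A_{\delta}}$ covering the half-turn of $\bb H$ about $p_{\delta}$, so that the monodromy of $\pi^{\mr n}$ around $\gamma_{\delta}$ is exactly $T_{A_{\delta}}^2$. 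Together with a rigidity statement — that no nontrivial element of $W(X)$ fixes a stability condition modulo $\tGL^+(2,\bb R)$ — this identifies $\pi^{\mr n}$ with the covering classified by $\Ker\psi\le\pi_1(Y)$, where $\psi\colon\pi_1(Y)\to W(X)$, $\gamma_{\delta}\mapsto T_{A_{\delta}}^2$, is the tautological surjection; equivalently $\Stab^{\mr n}(X)\cong\widetilde Y/\Ker\psi$ with $\widetilde Y\cong\bb H$ the universal cover of $Y$. Granting this, $(2)\Leftrightarrow(3)$ is formal: $\Stab^{\mr n}(X)\cong\bb H=\widetilde Y$ iff $\Ker\psi=1$ iff $\psi$ is an isomorphism iff the $T_A^2$ freely generate $W(X)$, i.e. $W(X)=\bigast_A(\bb Z\cdot T_A^2)$.

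The hard part will be justifying that last paragraph, i.e. that $\pi^{\mr n}$ is classified precisely by $\Ker\psi$. The inclusion $\Ker\psi\subseteq\pi^{\mr n}_*\pi_1(\Stab^{\mr n}(X))$ — which alone yields $(2)\Rightarrow(3)$ — follows from the local monodromy computation: a loop in $Y$ killed by $\psi$ has monodromy a product of $T_A^{\pm2}$'s equal to the identity autoequivalence, so it lifts to a loop. The reverse inclusion, needed for $(3)\Rightarrow(2)$, is the rigidity of stabilizers of geometric stability conditions. Both steps rest on Bridgeland's description of $\Stabd(X)$ as the union of the translates $\Phi(\overline{U(X)})$ of the closure of the geometric chamber, on checking that normalization by $\tGL^+(2,\bb R)$ trivializes the inessential autoequivalences ($[1]$, $[2]$, and the central-charge involution all act trivially on $\Stab^{\mr n}(X)$, being realized inside $\tGL^+(2,\bb R)$), and on the precise description of the walls of $U(X)$ through which spherical twists act, which is what makes the monodromy around each $p_{\delta}$ equal to $T_{A_{\delta}}^2$. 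Equivalently, one can argue combinatorially that $\Stab^{\mr n}(X)$ is homotopy equivalent to the dual graph of its geodesic wall-and-chamber decomposition and that this graph is a tree precisely when the $T_A^2$ satisfy no relation — but unwinding that dictionary needs the same structural input. Finally one should record the routine points used above: the relevant actions are free and proper, $Y$ is a genuine surface with free fundamental group, and the metric of Theorem~\ref{thm1} is complete so that uniformization applies.
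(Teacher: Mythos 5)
Your route is the paper's own: (1)$\Leftrightarrow$(2) via the $\tGL^+(2,\bb R)$-bundle $\Stabd(X)\to\Stab^{\mr{n}}(X)$ with contractible fibre plus uniformization, and (1)/(2)$\Leftrightarrow$(3) by viewing $\Stab^{\mr{n}}(X)$ as a Galois cover of $\bb H$ minus a discrete set of spherical points, whose fundamental group is a free product, and matching monodromy with squared spherical twists (this is Corollary \ref{covering2}, Lemma \ref{key lemma}, Proposition \ref{2.9} and the proof of Theorem \ref{thm2}). The problem is that the whole argument hinges on the two statements you yourself flag as ``the key input'' and ``the hard part'', and for these you offer only a heuristic, so the proof is incomplete exactly where the real work lies. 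Concretely: (i) the deck transformation group of the covering must be identified with $W(X)$ (upstairs, with $W(X)\times\bb Z\cdot[2]$); in the paper this comes from Proposition \ref{3.3} and \cite[Theorem 13.3]{Bri}, together with Remark \ref{spherical sheaf} ($\mu$-stability of spherical sheaves, so that every element of $W(X)$ acts trivially on cohomology and preserves $\Stabd(X)$, and the action on fibres is free and transitive). (ii) The monodromy identity: the lift of the small loop $\ell_A$ around $p(v(A))$ ends at $T_{A*}^2\sigma_0$. Your phrase ``the single twist $T_{A_\delta}$ covering the half-turn about $p_\delta$'' is not an argument; in the paper this is Proposition \ref{grouphom}, and its proof requires the analysis of $\partial U(X)$: Lemma \ref{2.3} (every boundary point is in general position, so the loop crosses exactly one wall, exactly once), Lemma \ref{proposition} (the explicit description of the components $W_A^{\pm}$ of $\partial U(X)$), and the argument excluding type $(A^-)$ at the crossing — without which the monodromy could a priori be $T_A^{-2}$ or involve a different spherical object. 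Until (i) and (ii) are proved, neither inclusion in your classification of $\pi^{\mr{n}}$ by $\Ker\psi$ is established, so neither direction of (2)$\Leftrightarrow$(3) is proved.

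A smaller point: the hyperbolic metric of Theorem \ref{thm1} is \emph{not} complete — it is pulled back along $\pi^{\mr{n}}$ from the restriction to $\mf{H}_0(X)$ of the metric of $\bb H$, and the removed spherical points lie at finite distance — so your appeal to ``complete hyperbolic surface, hence uniformization applies'' rests on a false premise. This is harmless: completeness is not needed, since $\Stab^{\mr{n}}(X)$ is a Riemann surface admitting a holomorphic covering map onto $\mf{H}_0(X)\subset\bb H$, hence of hyperbolic type, and the uniformization theorem then gives $\Stab^{\mr{n}}(X)\cong\bb H$ precisely when it is simply connected, which is all that (1)$\Leftrightarrow$(2) requires; but as written your justification of that step should be corrected.
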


We give two remarks on Theorem \ref{thm2}. 
Firstly we could not prove the simply connectedness. 
However by using the hyperbolic structure on $\Stab ^{\mr{n}}(X)$, we can deduce the global geometry not only of $\Stab ^{\mr{n}}(X)$ but also of $\Stabd(X)$ as follows. 
Since $\Stabd (X)$ is a $\tGL^+(2, \bb R)$-bundle on $\Stab ^{\mr{n}}(X)$, 
and we see $\Stabd (X)$ is simply connected if and only if it is a $\tGL^+(2, \bb R)$-bundle over the upper half plane $\bb H$. 

Secondly, if Conjecture \ref{CONJ} holds then we see the kernel $\Ker (\kappa)$ is generated by $W(X)$ and the double shift $[2]$. 
Since the double shift $[2]$ commutes with any equivalence, the freeness of $W(X)$ implies $\Ker (\kappa) / \bb Z[2]$ is free. 
However in higher Picard rank cases, it is thought that the generators of $\Ker (\kappa)/ \bb Z [2]$ have relations (See  also Remark \ref{4.3}). 
Hence the freeness of $W(X)$ is a special phenomena.

\subsection{Third theorem}
 
In the third theorem, we study chamber structures on $\Stabd(X)$ in terms of the hyperbolic structure on $\Stab^{\mr{n}}(X)$. 
Before we state the third theorem, let us recall chamber structures. 

For a set $\mca S \subset D(X)$ of objects which has bounded mass and an arbitrary compact subset $B \subset \Stabd(X)$, we can define a finite collection of real codimension $1$ submanifolds $\{ W_{\gamma} \}_{\gamma \in \Gamma}$ satisfying the following property:
\begin{itemize}
\item Let $C \subset B \setminus \bigcup _{\gamma \in \Gamma}W_{\gamma}$ be an arbitrary connected component. If $E \in \mca S$ is $\sigma$-semistable for some $\sigma \in  C$ then $E$ is $\tau$-semistable for all $\tau \in C$. 
\end{itemize}
Each $W_{\gamma}$ is said to be a \textit{wall} and each connected component $C$ is said to be a \textit{chamber}. 
In this paper we call all data of chambers and walls a \textit{chamber structure}. 
We have to remark that chamber structures on $\Stabd(X)$ descend to the normalized stability manifold $\Stab^{\mr{n}}(X)$. 
Namely $C/\tGL^+(2, \bb R)$ and $\{ W_{\gamma}/\tGL^+(2, \bb R) \}$ also define a chamber structure on $\Stab^{\mr{n}}(X)$. 
Our third theorem is the following:

\begin{theorem}[=Theorem \ref{thm3}]
All walls of chamber structures of $\Stab^{\mr{n}}(X)$ are geodesic. 
\end{theorem}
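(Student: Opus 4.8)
The plan is to transport the whole question to the period domain. Recall from Theorem~\ref{thm1} (and its proof) that the hyperbolic structure on $\Stab^{\mr{n}}(X)$ is induced, via the period map $\sigma\mapsto P_\sigma$, from the symmetric space $\mca{Q}$ of positive-definite real $2$-planes in $\mca{N}(X)\otimes\bb{R}$; here $\mca{N}(X)=H^0\oplus\mr{NS}(X)\oplus H^4$ is the algebraic Mukai lattice, $P_\sigma$ is the plane spanned by $\mr{Re}\,\Omega_\sigma$ and $\mr{Im}\,\Omega_\sigma$, and $Z_\sigma=\langle\Omega_\sigma,-\rangle$ is the central charge of $\sigma$. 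Since $\rho(X)=1$, $\mca{N}(X)$ has rank $3$ and signature $(2,1)$, so $\mca{Q}$ is (after normalising the curvature) the hyperbolic plane, and the induced map $\bar\pi\colon\Stab^{\mr{n}}(X)\to\mca{Q}$ is a local isometry. The feature of $\mca{Q}$ that matters is that $P\mapsto P^{\perp}$ identifies it with the Klein (Beltrami) model: the interior $\{[x]\in\bb{P}(\mca{N}(X)\otimes\bb{R}):\langle x,x\rangle<0\}$ of the absolute conic, in which the geodesics are precisely the intersections with projective lines. Since $\bar\pi$ is a local isometry, it is enough to show that every wall of $\Stab^{\mr{n}}(X)$ is mapped into a geodesic of $\mca{Q}$; and as these walls descend from the walls $W_\gamma$ of $\Stabd(X)$, this reduces to proving that the ``numerical walls'' underlying the chamber structure become straight chords in $\mca{Q}$.

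Accordingly, the next step is to make a wall explicit. By the construction of the chamber structure recalled before the theorem (only finitely many potential Harder--Narasimhan or Jordan--H\"older factors occur over a compact set), each $W_\gamma$ is contained in a finite union of \emph{numerical walls}: for two linearly independent Mukai vectors $v,w\in\mca{N}(X)$, arising as the classes of a semistable object and one of its sub- or quotient objects, the associated numerical wall is $\mca{W}_H=\{\sigma:Z_\sigma(v)\text{ and }Z_\sigma(w)\text{ are }\bb{R}\text{-proportional}\}$, where $H:=\langle v,w\rangle\subseteq\mca{N}(X)$ is the rank-$2$ sublattice they span. The elementary but decisive point is that $Z_\sigma(v)$ and $Z_\sigma(w)$ are real-proportional if and only if the $\bb{R}$-linear map $Z_\sigma|_{H\otimes\bb{R}}\colon H\otimes\bb{R}\to\bb{C}$ has nonzero kernel, i.e.\ there is $0\neq h\in H$ with $\langle\Omega_\sigma,h\rangle=0$; and $\langle\Omega_\sigma,h\rangle=0$ means exactly $h\perp\mr{Re}\,\Omega_\sigma$ and $h\perp\mr{Im}\,\Omega_\sigma$, i.e.\ $h\in P_\sigma^{\perp}$. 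As $P_\sigma^{\perp}$ is one-dimensional (the Mukai form is non-degenerate and $P_\sigma$ is positive-definite), this reads $P_\sigma^{\perp}\subseteq H$. Hence, under $\bar\pi$, the numerical wall $\mca{W}_H$ becomes exactly $\{P\in\mca{Q}:P^{\perp}\subseteq H\}$, which in the Klein model is the chord cut out by the projective line $\bb{P}(H\otimes\bb{R})$.

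It remains to see that this chord is genuinely a geodesic whenever it is nonempty, and to dispose of the $(-2)$-classes. Since $\mca{N}(X)\otimes\bb{R}$ has signature $(2,1)$ it contains no negative-definite $2$-plane, so if (the Mukai form on) $H\otimes\bb{R}$ is positive-definite or degenerate then $\bb{P}(H\otimes\bb{R})$ misses the interior of the conic and $\mca{W}_H=\emptyset$; thus a nonempty numerical wall forces $H\otimes\bb{R}$ to have signature $(1,1)$. In that case let $u$ span the positive line $H^{\perp}$; then $u^{\perp}=H$, and the reflection $s_u=\mr{id}_{u^{\perp}}\oplus(-\mr{id}_{\bb{R}u})$ is an isometry of $\mca{N}(X)\otimes\bb{R}$, hence of $\mca{Q}$, whose fixed locus in $\mca{Q}$ is exactly $\{P:u\in P\}=\{P:P^{\perp}\subseteq H\}=\mca{W}_H$. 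The fixed locus of an isometric involution is totally geodesic, and being $1$-dimensional it is a geodesic. Finally, passing from $\mca{P}^{+}(X)$ to $\mca{P}^{+}_0(X)$ deletes from $\mca{Q}$ only the isolated points $[\delta]$ attached to $(-2)$-classes $\delta$, so each numerical wall inside the stability domain is such a geodesic with at most countably many points removed, and every connected component remains a geodesic arc. Pulling this back along the local isometry $\bar\pi$, every $W_\gamma$, and therefore every wall of $\Stab^{\mr{n}}(X)$, is totally geodesic.

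The geometric core---numerical walls are straight chords---is thus the one-line linear-algebra identity of the second paragraph. What requires care is the two reductions framing it: verifying, from the explicit construction of the metric in Theorem~\ref{thm1}, that $\bar\pi$ really is a local isometry onto a dense open subset of the Klein model; and the standard but slightly fiddly bookkeeping that a wall $W_\gamma$ of a chamber structure is contained in a finite union of numerical walls $\mca{W}_H$ with $H$ of signature $(1,1)$. I expect this last point---correctly matching the abstractly defined walls to the numerical ones---to be the main obstacle.
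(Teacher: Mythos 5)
Your proposal is correct in substance, but it proves the theorem by a genuinely different route than the paper. The paper argues in coordinates: following Bridgeland, each wall is cut out by the condition $Z(v_i)/Z(v_j)\in\bb R_{>0}$ for a pair of non-proportional Mukai vectors from a finite set; after applying the cohomological twist $T^H_{mL}$ for $m\gg 0$ so that all the relevant vectors have nonzero rank, the rewriting (\ref{key}) of the central charge from Remark \ref{Kaw4.0} shows that the equation (\ref{circle}), $\mf{Im}\,Z(v_i)\overline{Z(v_j)}=0$, defines a vertical line or a semicircle centred on the real axis in $\mf H(X)$, i.e.\ a Poincar\'e geodesic, and Theorem \ref{thm1} (the twist acts isometrically on $\Stab^{\mr n}(X)$) transports this back to the original wall. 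Your argument replaces this computation by the observation that real-proportionality of $Z(v)$ and $Z(w)$ means $\Ker\big(Z|_{H\otimes\bb R}\big)\neq 0$ for $H=\<v,w\>$, i.e.\ the negative line $P_\sigma^{\perp}$ lies in $H\otimes\bb R$, so that in the Klein model the numerical wall is the chord cut by $\bb P(H\otimes\bb R)$, equivalently the fixed locus of the reflection in the positive vector spanning $H^{\perp}$, hence totally geodesic; this is cleaner, avoids the rank-zero nuisance (so no $T^H_{mL}$ trick and no appeal to Theorem \ref{thm1} at this point), and makes the signature dichotomy for $H$ transparent. What your write-up leaves unverified --- and what the paper's coordinate computation supplies implicitly --- is the step you yourself flag: that the metric of Theorem \ref{thm1}, defined as the Poincar\'e metric on $\mf H_0(X)\subset\bb H$, agrees up to a constant factor with the invariant metric on the space of positive $2$-planes under $x+\sqrt{-1}y\mapsto \exp\big((x+\sqrt{-1}y)L\big)$, so that Klein-model chords really are Poincar\'e geodesics. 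This is a standard equivariance argument (the period map intertwines $SO^+(\mca N(X)\otimes\bb R)$ with M\"obius transformations, as Lemma \ref{3.2} already illustrates for integral isometries, so the pulled-back invariant metric is a multiple of $ds^2=(dx^2+dy^2)/y^2$), and since rescaling does not change geodesics it closes the argument; but as it stands it is the one missing verification, and the paper's explicit computation of (\ref{circle}), in the spirit of Lemma \ref{6.1}, is precisely the concrete substitute for it.
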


\subsection{Revisit of Orlov's theorem}\label{motivation}
 
Generally speaking Fourier-Mukai transformations on $X$ may change chamber structures (This does not mean Fourier-Mukai transformations just permute chambers). 
By Theorems \ref{thm1} and \ref{thm3}, we see that the image of walls by Fourier-Mukai transformations is also geodesic in $\Stab^{\mr{n}}(X)$. 
Applying this observation we show the following:

\begin{proposition}[=Proposition \ref{Orlov}]
Let $X$ be a projective K3 surface with $\rho(X)=1$ and $Y$ a Fourier-Mukai partner of $X$ with an equivalence $\Phi \colon D(Y) \to D(X)$. 

If the induced morphism $\Phi_* \colon\Stab(Y) \to \Stab(X)$ preserves the distinguished component, then $Y$ is isomorphic to the fine moduli space of Gieseker stable torsion free sheaves.  
\end{proposition}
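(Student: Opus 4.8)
The plan is to transport the geometric (large volume) chamber of $\Stab^{\mr n}(Y)$ across to $\Stab^{\mr n}(X)$ by the isometry of Theorem~\ref{thm1}, to use the geodesicity of walls (Theorem~\ref{thm3}) to see that its image still carries a large volume end, and then to recognise the objects $\Phi(\mca O_y)$ near that end as Gieseker stable sheaves on $X$, so that $Y$ is identified with the moduli space of the latter.

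First I would transport the chamber. Since $\Phi_*$ preserves the distinguished component, Theorem~\ref{thm1} gives an isometry $\Phi^{\mr n}_* \colon \Stab^{\mr n}(Y) \to \Stab^{\mr n}(X)$ of hyperbolic surfaces, and it carries $\sigma$-stable objects to $\Phi(\sigma)$-stable objects, hence the chamber structure of $\Stab^{\mr n}(Y)$ to that of $\Stab^{\mr n}(X)$; moreover, since an isometry carries geodesics to geodesics and, by Theorem~\ref{thm3}, all walls on both sides are geodesic, $\Phi^{\mr n}_*$ carries the walls of $\Stab^{\mr n}(Y)$ precisely onto those of $\Stab^{\mr n}(X)$. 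Let $U(Y) \subseteq \Stab^{\mr n}(Y)$ be the geometric chamber, i.e.\ the image of the locus of stability conditions on $Y$ for which every skyscraper $\mca O_y$ is stable of one common phase; for $\rho=1$ this is a concrete region of $\bb H$ bounded by the walls attached to spherical locally free sheaves on $Y$, noncompact, with its large volume limit a (cuspidal) end $\mf e_Y$. Put $V := \Phi^{\mr n}_*(U(Y))$. As $\mca O_y$ is $\sigma$-stable for every $\sigma \in U(Y)$, with primitive isotropic Mukai vector $v(\mca O_y)=(0,0,1)$, the objects $E_y := \Phi(\mca O_y)$ are $\tau$-stable for every $\tau \in V$, with primitive isotropic Mukai vector $v_0 := \Phi^H(0,0,1)$.

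Next I would identify the end of $V$. Since $\Phi^{\mr n}_*$ is an isometry matching walls with walls, $V$ inherits a noncompact end $\mf e := \Phi^{\mr n}_*(\mf e_Y)$ around which its local geometry (end together with incoming geodesic walls) is that of the large volume region of $U(Y)$, so that from the classification of such ends in $\Stab^{\mr n}(X)$ one concludes that $\mf e$ is itself a large volume limit there. After replacing $\Phi$ by $\Phi[1]$ we may assume $v_0=(r_0,c_0H,s_0)$ has $r_0\geq 0$; since $\rho=1$ and $v_0^2=0$ force $c_0=0$ once $r_0=0$, either $r_0>0$ or $v_0=\pm(0,0,1)$, the latter degenerate case being treated separately and giving $Y\cong X$. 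Assume $r_0>0$. Approaching $\mf e$ along a path $\tau_t\in V$ and applying the standard description of stable objects near a large volume limit, for $t\gg 0$ the $\tau_t$-stable objects of class $v_0$ are precisely the Gieseker stable torsion free sheaves on $X$ of Mukai vector $v_0$ (twisted by a line bundle if the $B$-field at $\mf e$ requires it, which we absorb into $\Phi$, affecting neither $\Stabd(X)$ nor $Y$); since $\tau_t$ crosses no wall of $\Stab^{\mr n}(X)$ inside the chamber $V$, the set of $\tau$-stable objects of class $v_0$ is independent of $\tau\in V$. Hence the moduli space $M:=M^H_X(v_0)$ --- a smooth projective K3 surface, and a \emph{fine} moduli space because $\Phi^H$ is a lattice isometry, so that $\langle v_0,-\rangle$ is surjective onto $\bb Z$ --- coincides with the moduli space of $\tau$-stable objects of class $v_0$ for every $\tau\in V$.

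To conclude, $Y$ is tautologically the fine moduli space of $\sigma$-stable objects of class $(0,0,1)$ for $\sigma\in U(Y)$, with universal object $\mca O_{\Delta}$ on $Y\times Y$; applying the Fourier--Mukai functor $\Phi$, together with its relative versions over a base, to flat families identifies this moduli functor with that of $\tau$-stable objects of class $v_0$ on $X$ for $\tau\in V$, and since $\Phi$ is an equivalence the identification is an isomorphism of functors, whence $Y\cong M$ (in the degenerate case $Y\cong X\cong M^H_X(1,0,0)$, the fine moduli space of ideal sheaves of points). The main obstacle is the middle step: making rigorous, in the \emph{normalized} space, that the end $\mf e=\Phi^{\mr n}_*(\mf e_Y)$ of $V$ is a \emph{large volume} limit of $\Stab^{\mr n}(X)$ --- this is exactly where Theorems~\ref{thm1} and~\ref{thm3} enter essentially, since without the hyperbolic metric and the geodesicity of walls one could only assert that $V$ has some noncompact end, not that it is an end along which stable objects of a primitive isotropic class become Gieseker stable sheaves. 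One must also carry out the large volume limit analysis near $\mf e$ with care, handle the Mukai vector (and line-bundle twist) bookkeeping, and perform the routine verification that $y\mapsto\Phi(\mca O_y)$ is induced by an isomorphism of moduli functors.
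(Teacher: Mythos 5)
There is a genuine gap, and it sits exactly at the step you yourself flag as the ``main obstacle'': the claim that the end $\mf e=\Phi^{\mr n}_*(\mf e_Y)$ of $V=\Phi^{\mr n}_*(U(Y))$ ``is itself a large volume limit'' of $\Stab^{\mr n}(X)$, deduced from a ``classification of such ends''. No such classification is available, and the assertion is in fact false as stated: by Lemma \ref{3.2}, the map induced by $\Phi$ on $\mf H$ is a linear fractional transformation which (when $r_0=\rank v_0\neq 0$) sends the cusp at infinity of $\mf H(Y)$ to the \emph{finite} boundary point $n/r$, i.e.\ to the associated point $p(v_0)$ of the isotropic vector $v_0$. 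So the image of the large volume end of $U(Y)$ is a cusp at a rational point of $\partial\bb H$, not the large volume end of $X$, and Bridgeland's description of stable objects in the large volume limit (\cite[Proposition 14.2]{Bri}) — which is what you invoke to recognise the stable objects of class $v_0$ as Gieseker stable sheaves — applies only along $\omega^2\to\infty$ on $X$. Asserting that near $\mf e$ the stable objects of class $v_0$ are Gieseker stable sheaves is essentially equivalent to the statement to be proved, so nothing has been gained; the isometry and geodesicity of walls by themselves do not tell you that the chamber $V$ reaches the genuine large volume limit of $X$.

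The paper closes precisely this gap quantitatively. Using Lemma \ref{proposition} (each component of $\partial U(Y)$ is $\tGL^+(2,\bb R)$ times a hyperbolic segment) together with Theorem \ref{thm1}, it shows that $\pi_{\mf H}(\Phi_*\partial U(Y))$ consists of hyperbolic segments whose half-circles have explicitly bounded diameter (Lemmas \ref{6.1}, \ref{6.2} and Proposition \ref{6.3}), hence lie in an explicit bounded region $R(Y,\Phi)$ (Corollary \ref{6.4}). One then picks a base point $(\beta_0,\omega_0)$ with $\beta_0$ to the left of that region and $\omega_0^2>2$, uses Proposition \ref{3.3} to compose with some $\Psi\in W(X)\times\bb Z[2]$ so that the corresponding stability condition lies in $U(X)\cap\Phi'_*(U(Y))$, and observes that the vertical ray $\sigma_{(\beta_0,t\omega_0)}$, $t\to\infty$, avoids $R(Y,\Phi')$ and hence never meets $\Phi'_*(\partial U(Y))$. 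Only then can Bridgeland's large volume limit argument be applied to the fixed objects $\Phi'(\mca O_y)$, which stay stable along the ray, yielding Gieseker stability and the identification of $Y$ with the fine moduli space. Your outline would become a proof only if you supplied an argument of this kind (or some substitute controlling where the images of the walls of $\partial U(Y)$ can sit); as written, the decisive analytic input is missing. The remaining bookkeeping in your proposal (shifting to arrange $r_0>0$, fineness of the moduli space from surjectivity of $\langle v_0,-\rangle$, the moduli-functor identification) is in line with the paper, though the degenerate case $v_0=\pm(0\+0\+1)$ is handled there more simply by composing with $T_{\mca O_X}$ and a shift to force positive rank rather than by a separate argument.
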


We have to mention that a more stronger statement was already proven by Orlov in \cite{OrlK3}; 
Any Fourier-Mukai partner of projective K3 surfaces is isomorphic to the fine moduli space of Gieseker stable sheaves. 
Our proof never needs the global Torelli theorem which was essential for Orlov's proof. 
Hence our proof gives a new feature of stability condition;
The theory of stability conditions substitutes for the global Torelli theorem. 
Since the strategy of Proposition \ref{Orlov} is technical, 
we will explain it in \S \ref{strategy}.

\subsection{Stable complexes in the large volume limit}

We also discuss the stability of complexes in large volume limits by using Lemma \ref{3.2} which is crucial for Theorem \ref{thm1}. 
More precisely in Corollary \ref{D3} we prove that the complexes $T_A(\mca O_x)$ are stable in the large volume limit where $T_A(\mca O_x)$ is a spherical twist of $\mca O_x$ by a spherical locally free sheaf. 
Originally it was expected that the $\sigma$-stability in the large volume limit is equivalent to Gieseker twisted stability (See also \cite[\S 14]{Bri}). 
However the possibility of stable complexes in the large volume limit is referred in \cite{Bay09}. 
We give an answer to this problem.

\subsection{Contents}

In Section \ref{2} we prepare some basic terminologies. 
In Section \ref{3} we prove the first main theorem. 
In Section \ref{4} we prove the second main theorem. 
The third theorem will be proven in Section \ref{5}. 
The analysis of $\partial U(X)$, which is necessary for Theorem \ref{thm2}, will be also done in Section \ref{5}. 
In Section \ref{6} we revisit Orlov's theorem. 
In Section \ref{7} we discuss the stability of $T_A^{-1}(\mca O_x)$ in the large volume limit.

\section{Preliminaries}\label{2}

In this section we prepare basic notations and lemmas. 
Let $(X,L)$ be a pari of a projective K3 surface with $\mr{NS}(X) = \bb Z L$. 
Almost all notions are defined for general projective K3 surfaces. 
To simplify the explanations we focus on K3 surfaces with $\rho (X) =1$. 

\subsection{Terminologies}\label{termi}

The abelian category of coherent sheaves on $X$ is denoted by $\mr{Coh}(X)$. 
Note that the numerical Grothendieck group $\mca N(X)$ is isomorphic to 
\[
H^0(X, \bb Z) \+ \mr{NS}(X) \+ H^4(X, \bb Z).
\] 
We put $v(E) = ch (E) \sqrt{td_X}$ for $E \in D(X)$. 
Then we see 
\[
v(E) = r_E \+ c_E \+ s_E \in \mca N(X).
\] 
One can easily check that $r_E= \rank E$, $c_E $ is the first Chern class $c_1(E)$ and $s_E = \chi (X, E)- \rank E$. 
Hence for a vector $v = r\+ c\+ s \in \mca N(X)$, the component $r$ is called the \textit{rank} of $v$.

The Mukai pairing $\<, \>$ on $H^*(X, \bb Z) $ is given by 
\[
\< r\+c \+ s , r'\+ c'\+ s' \> = c c' -r s' - r's. 
\]
By Riemann-Roch theorem we see 
\[
\chi(E,F) = \sum_{i} (-1)^i\dim \Hom_{D(X)}^i(E,F) = - \< v(E), v(F) \>. 
\]

An object $A \in D(X)$ is said to be \textit{spherical} if $A$ staisfies
\[
\Hom_{D(X)}^i(A,A) = \begin{cases} \bb C & (i=0,2) \\ 0 & (\text{otherwise}). \end{cases}
\]
We note that $v(A)^2=-2$ if $A$ is spherical. 
By the effort of \cite{ST}, for a spherical object $A$ we could define the autoequivalence $T_A$ called a \textit{spherical twist} (See also \cite[Chapter 8]{Huy}). 
By the definition of $T_A$ we have the following distinguished triangle for $E \in D(X)$:
\begin{equation}
\begin{CD}
\Hom_{D(X)}^*(A, E) \otimes A @>\rm{ev}>> E @>>> T_A(E) ,
\end{CD}\label{spherical triangle}
\end{equation}
where $\mr{ev}$ is the evaluation map. 
We call the above triangle a \textit{spherical triangle}. 
We note that the vector of $T_A(E)$ can be calculated as follows
\[
v(T_A(E)) = v(E)  + \< v(E), v(A)  \> v(A). 
\] 
Let $\Delta(X)$ be the set of $(-2)$-vectors:
\[
\Delta (X) =\{ \delta \in \mca N(X) | \delta ^2 =-2 \}
\]
and let $\Delta ^+(X) $ be the set $
 \{ \delta \in \Delta (X) |\delta = r \+ c \+ s, r>0  \}$.

Following \cite{Bri}, we put 
\[
\mca P(X) = \{ v \in \mca N(X)\otimes \bb C  | \mf{Re}(v) \mbox{ and }\mf{Im}(v) \mbox{ span a positive 2 plane} \}
\]
Since $\mca P(X)$ has two connected components, we define $\mca P^+(X)$ by the connected component containing $\exp (\sqrt{-1}\omega)$ where $\omega$ is an ample class. 
Then $\mca P^+(X)$ has the right $\mr{GL}^+(2, \bb R)$ action as the change of basis of the planes. 
This action is free. 
Hence there exists the quotient $\mca P^+(X) \to \mca P^+(X) /\mr{GL}^+(2, \bb R)$ which gives a principle $\mr{GL}^+(2, \bb R)$-bundle with a global section. 

Under the assumption $\rho(X)=1$,  
$\mca P^+(X)/\mr{GL}^+(2, \bb R)$ is isomorphic to the set $\mf{H}(X)$ where 
\[
\mf{H}(X) =  \{   (\beta,\omega)=(xL, yL) | x+\sqrt{-1} \in \bb H\}. 
\] 
Clearly $\mf{H}(X)$ is canonically isomorphic to $\bb H$. 
Then the global section $\mf{H}(X) \to \mca P^+(X)$ is given by 
\[
\mf{H}(X)  \ni (x,y) \mapsto \exp(\beta + \sqrt{-1}\omega) \in \mca P^+(X). 
\]
In particular $\mca P^+(X)$ is isomorphic to $\bb H \times GL^+(2, \bb R)$. 
We put $\mca P^+_0(X)$ by 
\[
\mca P^+_0(X) = \mca P^+(X) \setminus \bigcup _{\delta \in \Delta (X) }\< \delta \>^{\perp}
\]
where $\< \delta \>^{\perp}$ is the orthogonal complement of $\delta$ with respect to the Mukai pairing on $H^*(X, \bb Z)$\footnote{We remark that the definition of $\mca P^+_0(X)$ is independent of the assumption $\rho(X)=1$. }. 
Define
\[
\mf{H}_0(X) = \{ v \in \mf{H} (X) | \< \exp(v), \delta \> \neq 0\ (\forall \delta \in \Delta (X)) \}. 
\]
Then we see $\mca P^+_0(X)$ is isomorphic to $\mf{H}_0(X) \times \mr{GL}^+(2, \bb R)$.

\subsection{Stability conditions on K3 surfaces}\label{SK3}

Let $\Stab (X)$ be the set of numerical locally finite stability conditions on $D(X)$. 
We put $\sigma = (\mca A, Z) \in \Stab (X)$ where $\mca A$ is the heart of a bounded t-structure on $\mca D$ and $Z$ is a central charge. 
Since the Mukai paring is non-degenerate on $\mca N(X)$ we have the natural map:
\[
\pi \colon \Stab (X) \to \mca N(X)\otimes \bb C,\ \pi(\sigma)= Z^{\vee}
\]
where $Z(E) = \< Z^{\vee}, v(E)\>$.

In $\Stab (X)$, there is a connected component $\Stabd (X)$ which contains the set $U(X):$  
\begin{multline*}
U(X) =\{ \sigma  = (\mca A, Z) \in \Stab (X) |Z^{\vee} \in \mca P(X)\setminus \bigcup _{\delta \in \Delta (X) }\< \delta \>^{\perp}, \\ \mca O_x \mbox{ is $\sigma$-stable in the same phase for all }x \in X  \}. 
\end{multline*}
Let $\bar U(X)$ be the closure of $U(X)$ in $\Stab (X)$. 
Then we see that $\bar U(X)$ be the set of stability conditions $\sigma$ such that $\mca O_x$ ($\forall x\in X$) is $\sigma$-semistable in the same phase with $Z^{\vee} \in \mca P(X) \setminus \bigcup _{\delta \in \Delta (X) }\< \delta \>^{\perp}$. 
Define $\partial U(X)$ by $\bar U(X) \setminus U(X)$ and call it the \textit{boundary of $U(X)$}. 

We define the set $V(X)$ by
\[
V(X) = \{ \sigma= (\mca A, Z) \in U(X) | Z(\mca O_x)=-1, \ \mca O_x\mbox{ is $\sigma$-stable with phase $1$}  \}. 
\]

One can see $U(X) = V(X) \cdot \tGL^+(2, \bb R) \cong V(X) \times \tGL^+(2, \bb R)$ by \cite[Proposition 10.3]{Bri}. 
Furthermore the set $V(X)$ is parametrized by $(\beta, \omega) \in \mf{H}(X)$ in the following way:

For the pair $(\beta, \omega)$, put $\mca A_{(\beta, \omega)}$ and $Z_{(\beta, \omega)}$ as follows $:$
\begin{eqnarray*}
\mca A_{(\beta,\omega )} &:=&   \big\{ E^{\bullet} \in D(X) \big| 
H^i(E^{\bullet})	\begin{cases}
					\in \mca T_{(\beta,\omega)} & (i=0) \\ 
					\in \mca F_{(\beta,\omega)} & (i=-1) \\ 
					= 0 & (\text{otherwise}) 
					\end{cases} \big\} \\	
Z_{(\beta, \omega)} (E)	&:=& \< \exp(\beta+ \sqrt{-1}\omega), v(E) \>,
\end{eqnarray*}
where 
\begin{eqnarray*}
\mca T_{(\beta,\omega )} &:=& \{ E \in \mr{Coh}(X) | E\mbox{ is a torsion sheaf or }
\mu _{\omega}^-(E/\mr{torsion} ) > \beta \omega  \} \mbox{ and }\\
\mca F_{(\beta,\omega)} &:=& \{ E \in \mr{Coh}(X) | E\mbox{ is torsion free and } 
\mu _{\omega}^+ (E) \leq \beta \omega  \}. 
\end{eqnarray*}
Here $\mu_{\omega}^+ (E)$ (respectively $\mu_{\omega}^-(E)$) is the maximal slope 
(respectively minimal slope) of semistable factors of a torsion free sheaf $E$ with respect to the slope stability. 
Since the pair $(\mca T_{(\beta, \omega)}, \mca F_{(\beta, \omega)})$ gives a torsion pair on $\mr{Coh}(X)$, $\mca A_{(\beta, \omega)}$ is the heart of a bounded $t$-structure on $D(X)$.  
We denote the pair $(\mca A_{(\beta, \omega)}, Z_{(\beta, \omega)})$ by $\sigma _{(\beta, \omega)}$.

\begin{proposition}[{\cite[Proposition 10.3]{Bri}}]\label{2.1}
Assume that $(\beta, \omega)$ satisfies the condition
\begin{equation}
\< \exp(\beta+\sqrt{-1}\omega), \delta  \> \not\in \bb R_{\leq 0}, (\forall \delta \in \Delta^+(X)) \label{BG-condition}
\end{equation}
Then the pair $\sigma_{(\beta, \omega)}$ gives a numerical locally finite stability condition on $D(X)$.  
Furthermore we have
\[
V(X) = \{ \sigma_{(\beta, \omega)} \in \Stabd(X) | (\beta, \omega)\mbox{ satisfies the condition (\ref{BG-condition})} \}. 
\] 
\end{proposition}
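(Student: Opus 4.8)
The plan is to cite Bridgeland's construction almost verbatim and then identify the slice $V(X)$ explicitly under the standing hypothesis $\rho(X)=1$. First I would recall that by \cite[Proposition 10.3]{Bri} the pair $\sigma_{(\beta,\omega)}=(\mca A_{(\beta,\omega)}, Z_{(\beta,\omega)})$ is a numerical locally finite stability condition on $D(X)$ precisely when no spherical sheaf (or shift thereof) destabilizes the construction, and that this numerical obstruction is exactly the condition that $Z_{(\beta,\omega)}(\delta)=\<\exp(\beta+\sqrt{-1}\omega),\delta\>\notin\bb R_{\leq 0}$ for every $\delta\in\Delta^+(X)$; this is condition (\ref{BG-condition}). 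The key point to verify is that when $\rho(X)=1$ the vector $\exp(\beta+\sqrt{-1}\omega)$ for $(\beta,\omega)\in\mf H(X)$ automatically lies in $\mca P^+(X)\setminus\bigcup_{\delta}\<\delta\>^{\perp}$ as soon as (\ref{BG-condition}) holds: membership in $\mca P^+(X)$ follows from the global section description $\mf{H}(X)\ni(x,y)\mapsto\exp(\beta+\sqrt{-1}\omega)$ already established, and avoidance of $\<\delta\>^{\perp}$ is immediate since $\<\exp(\beta+\sqrt{-1}\omega),\delta\>=0$ would in particular put the value in $\bb R_{\leq 0}$, contradicting (\ref{BG-condition}) for either $\delta$ or $-\delta$ (one of which lies in $\Delta^+(X)$, because a $(-2)$-vector of rank $0$ cannot be orthogonal to $\exp(\beta+\sqrt{-1}\omega)$ when $\omega^2>0$).

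Next I would check the two containments of the asserted equality for $V(X)$. For ``$\subseteq$'': given $\sigma_{(\beta,\omega)}\in V(X)\subset U(X)$, by definition of $U(X)$ the skyscraper sheaves $\mca O_x$ are all $\sigma$-stable of the same phase and $Z^\vee\in\mca P(X)\setminus\bigcup_\delta\<\delta\>^\perp$; Bridgeland's analysis shows such a $\sigma$ must be of the form $\sigma_{(\beta,\omega)}$ (up to the $\tGL^+(2,\bb R)$-action, which the normalization $Z(\mca O_x)=-1$, phase $1$ kills) with $(\beta,\omega)$ satisfying (\ref{BG-condition}) — otherwise some spherical twist $T_A(\mca O_x)$ would have the same or smaller phase, breaking stability of $\mca O_x$. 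For ``$\supseteq$'': if $(\beta,\omega)$ satisfies (\ref{BG-condition}), the first part of the proposition gives $\sigma_{(\beta,\omega)}\in\Stab(X)$; a direct computation shows $Z_{(\beta,\omega)}(\mca O_x)=-1$ and that $\mca O_x\in\mca A_{(\beta,\omega)}$ is a minimal object, hence stable of phase $1$, so $\sigma_{(\beta,\omega)}\in V(X)$, and it lies in the distinguished component because it is connected to the large-volume region (large $\omega$) inside $U(X)$, which lies in $\Stabd(X)$ by construction of $\Stabd(X)$ as the component containing $U(X)$.

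The main obstacle is the ``$\subseteq$'' direction: showing that \emph{every} stability condition in $V(X)$ actually arises as some $\sigma_{(\beta,\omega)}$ with (\ref{BG-condition}), i.e. that the explicit family exhausts $V(X)$. This is precisely the content of Bridgeland's \cite[Proposition 10.3]{Bri}, whose proof rests on a delicate classification of the possible hearts $\mca A$ for which all $\mca O_x$ are stable in the same phase — one must argue that such a heart is forced to be a tilt $\mca A_{(\beta,\omega)}$, which uses that the $\mca O_x$ generate $D(X)$ and a careful study of torsion pairs. Since this is a cited result, in our exposition I would simply invoke it and concentrate the new verification on the $\rho(X)=1$ reduction described above, namely that under Picard rank one the abstract condition $Z^\vee\in\mca P^+(X)\setminus\bigcup_\delta\<\delta\>^\perp$ is equivalent to the concrete inequality (\ref{BG-condition}) on the half-plane coordinates.
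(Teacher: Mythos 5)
Your proposal is correct and follows essentially the same route as the paper: the paper offers no proof of this statement at all, simply citing Bridgeland's \cite[Proposition 10.3]{Bri}, and your argument likewise reduces everything to that citation, adding only the (correct) Picard-rank-one bookkeeping that condition (\ref{BG-condition}) is equivalent to $Z^{\vee}\in\mca P^+(X)\setminus\bigcup_{\delta}\<\delta\>^{\perp}$ together with the standard verifications that $Z_{(\beta,\omega)}(\mca O_x)=-1$, that $\mca O_x$ is a minimal object of the tilted heart, and that the resulting stability condition lies in $\Stabd(X)$.
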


\begin{remark}\label{Kaw4.0}
We put $v(E) = r_E \+ c_1(E) \+ s_E$ for $E \in D(X)$ . 
As the author remarked in \cite[Section 4, (4.1)]{Kaw10}, for objects $E \in D(X)$ with $\rank E \neq 0$, 
we can rewrite $Z_{(\beta , \omega)} (E)$ as follows, 
\begin{equation}
Z_{(\beta, \omega)}(E) = \frac{v(E)^2}{2 r_E} + \frac{r_E}{2}\Big( \omega + \sqrt{-1} \big(\frac{c_1(E)}{r_E}- \beta \big)   \Big)^2 \label{key}. 
\end{equation}
This equation (\ref{key}) plays an important role in Lemma \ref{3.2} which is crucial for Theorem \ref{thm1}. 
\end{remark}

\begin{definition}\label{W(X)}
For a projective K3 surface with $\rho (X)=1$ we define the subgroup $W(X)$ of $\Aut (D(X))$ generated by 
\[
W(X) = \<  T_A^2 | A=\mbox{spherical locally free sheaf}\>. 
\]
\end{definition}

Then by using $U(X)$ and $W(X)$ we can describe $\Stabd(X)$ in a explicit way:

\begin{proposition}[{\cite[Proposition 13.2]{Bri}}]\label{3.3}
Let $X$ be a projective K3 with $\rho (X)=1$. 
The distinguished connected component $\Stabd(X)$ is given by 
\[
\Stabd (X) = \bigcup _{\Phi \in W(X)} \Phi _* (\bar U(X)). 
\]
\end{proposition}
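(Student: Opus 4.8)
The plan is to reprove Bridgeland's statement \cite[Proposition~13.2]{Bri} by the standard ``open and closed'' argument. Put $\Sigma:=\bigcup_{\Phi\in W(X)}\Phi_*(\bar U(X))$; I would show that $\Sigma$ is a non-empty, open and closed subset of $\Stabd(X)$, which forces $\Sigma=\Stabd(X)$ since $\Stabd(X)$ is connected. First I would check the inclusion $\Sigma\subseteq\Stabd(X)$. Recall that $U(X)\cong V(X)\times\tGL^+(2,\bb R)$ with $V(X)$ parametrised by $\mf{H}(X)\cong\bb H$, so $U(X)$ and hence its closure $\bar U(X)$ are connected; since $U(X)\subseteq\Stabd(X)$ and a connected component of $\Stab(X)$ is open and closed, $\bar U(X)\subseteq\Stabd(X)$. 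Each autoequivalence acts on $\Stab(X)$ by a homeomorphism $\Phi_*$, which permutes connected components, so every $\Phi_*(\bar U(X))$ is connected and lies in a single component; it is therefore enough to join $\Phi_*(\bar U(X))$ to $\bar U(X)$ by a chain of translates in which consecutive members share a point, and then conclude by induction on the word length of $\Phi$ in the generators $T_A^2$.

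The chain is produced by the analysis of the boundary $\partial U(X)=\bar U(X)\setminus U(X)$ (following Bridgeland, and carried out for $\rho(X)=1$ in Section~\ref{5}). A point of $\partial U(X)$ is a stability condition at which some skyscraper $\mca O_x$ becomes strictly semistable, destabilised by a spherical sheaf which, because $\rho(X)=1$, is locally free; near a generic such point $\bar U(X)$ looks like a closed half-space bounded by a real-analytic wall, and the complementary open half-space is filled by an adjacent translate $T_{A*}^{\pm 2}(\bar U(X))$. The exponent $2$ is forced: under the covering $\pi\colon\Stabd(X)\to\mca P^+_0(X)$ the wall maps into the hyperplane $\<v(A)\>^{\perp}$, so a round trip across it must act trivially on $H^*(X,\bb Z)$, whereas $T_A$ acts there by the $(-2)$-reflection $v\mapsto v+\<v,v(A)\>v(A)$, a non-trivial involution; this is precisely why the relevant group is $W(X)=\<T_A^2\mid A\text{ spherical locally free}\>$. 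Thus $\bar U(X)$ and each $T_{A*}^{\pm 2}(\bar U(X))$ share a codimension-one wall, and applying $\Phi_*$ shows the same for $\Phi_*(\bar U(X))$ and $\Phi_*T_{A*}^{\pm 2}(\bar U(X))$; expanding $\Phi\in W(X)$ as a word in the $T_A^2$ gives the chains. Consequently $\Sigma$ is connected, contained in $\Stabd(X)$, and non-empty (it contains $\bar U(X)\supseteq U(X)\neq\emptyset$).

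Next I would prove $\Sigma$ open in $\Stabd(X)$. On $U(X)$ this is Bridgeland's openness of $U(X)$ (stability of all $\mca O_x$ in a common phase, together with $Z^{\vee}\notin\bigcup_{\delta}\<\delta\>^{\perp}$, is an open condition). At a boundary point the half-space picture shows a neighbourhood is covered by $\bar U(X)$ and the finitely many adjacent $T_{A*}^{\pm 2}(\bar U(X))$ --- finitely many by local finiteness of the wall-and-chamber structure. Since any point of $\Sigma$ is $\Phi_*(\sigma_0)$ with $\sigma_0\in\bar U(X)$, applying the homeomorphism $\Phi_*$ to such a neighbourhood gives openness at $\Phi_*(\sigma_0)$. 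For closedness I would use that, although $\Sigma$ is an infinite union of the closed sets $\Phi_*(\bar U(X))$, the family $\{\Phi_*(\bar U(X))\}_{\Phi\in W(X)}$ is \emph{locally finite} in $\Stabd(X)$: every compact $B\subseteq\Stabd(X)$ meets only finitely many translates, because $\{\Phi\in W(X)\mid\Phi_*(\bar U(X))\cap B\neq\emptyset\}$ is finite --- proved by induction on word length, applying a generator $T_A^{\pm 2}$ pushing a translate strictly ``away'' from $\bar U(X)$ (measured by a combinatorial chamber distance, or ultimately by the hyperbolic distance of Theorem~\ref{thm1}). A locally finite union of closed sets is closed, so $\Sigma$ is closed, and the three properties yield $\Sigma=\Stabd(X)$.

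The hard part is the input of the second paragraph: the fine geometry of $\partial U(X)$ --- that away from deeper strata it is a real-analytic codimension-one submanifold, that the objects destabilising the skyscrapers are exactly (shifts of) spherical locally free sheaves when $\rho(X)=1$, and that the region across such a wall is precisely the $T_A^{\pm 2}$-translate of $\bar U(X)$ --- together with the local finiteness used for closedness. These are the genuinely technical points; they are due to Bridgeland and are reviewed, in the $\rho(X)=1$ setting, in Section~\ref{5}.
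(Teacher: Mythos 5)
The paper does not prove this statement at all: it is imported verbatim from Bridgeland \cite[Proposition 13.2]{Bri}, and the only in-paper material related to its proof is the $\rho=1$ boundary analysis of Section \ref{5} (Lemma \ref{2.3}, Lemma \ref{proposition}, Proposition \ref{grouphom}). Your outline --- open-and-closed in the connected component $\Stabd(X)$, with the region across each boundary wall of $U(X)$ identified as $T_{A*}^{\pm2}(\bar U(X))$ --- is indeed the skeleton of Bridgeland's argument, so the route is the expected one, and you are right that the genuinely technical inputs (classification of $\partial U(X)$, absence of the curve-type walls when $\rho=1$, the fact that every boundary point lies on a single wall, i.e.\ Lemma \ref{2.3}) are exactly what has to be quoted or reproved.

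Two of the steps you do argue, rather than defer, are defective. First, your justification of the exponent $2$ is based on a false premise: the walls of $\partial U(X)$ do \emph{not} map into $\< v(A)\>^{\perp}$ under $\pi$ --- that locus has real codimension $2$ and is removed from $\mca P^+_0(X)$, and by the very definition of $\bar U(X)$ every $\sigma\in\partial U(X)$ has $Z^{\vee}\notin\bigcup_{\delta}\<\delta\>^{\perp}$; the image of $W_A^{\pm}$ is the real-codimension-one locus where $\mf{Im}\,Z(\mca O_x)\overline{Z(A)}=0$ with $Z(A)\neq0$. Moreover crossing a wall is not a deck transformation of the covering $\pi$, so no ``triviality on $H^*(X,\bb Z)$'' can be extracted this way, and the heuristic would not by itself exclude the far side being, say, $T_{A*}^{\pm1}(\bar U(X))$. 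The correct identification is the explicit Jordan--H\"older/Harder--Narasimhan analysis: on an $(A^+)$-wall the filtration of $\mca O_x$ is the spherical triangle $A^{\oplus r_A}\to\mca O_x\to T_A(\mca O_x)$, one shows $W_A^{+}=T_{A*}W'$ with $W'\subset U(X)$ as in Lemma \ref{proposition}, and the crossing computation of Proposition \ref{grouphom} is what forces the adjacent chamber to be $T_{A*}^{2}U(X)$. Second, your closedness step rests on an asserted local finiteness of the family $\{\Phi_*(\bar U(X))\}_{\Phi\in W(X)}$ ``by induction on word length''; this is not proved and is not easy --- proper discontinuity of the deck group does not give it, since $\bar U(X)$ is non-compact and every $\Phi\in W(X)$ acts trivially on $\mca N(X)$, so the numerical image $\pi(\Phi_*(\bar U(X)))$ is the same for all $\Phi$. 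As it stands, then, your text is a correct plan whose two crucial quantitative points (the square of the twist, and closedness of the union) still have to be carried out as in \cite{Bri} and Section \ref{5}, rather than a complete argument.
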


\begin{theorem}[\cite{Bri}]\label{covering}
The natural map $\pi: \Stabd(X) \to \mca N(X) \otimes \bb C$ has the image $\mca P^+_0(X)$. 
Furthermore $\pi$ is a Galois covering. 
The covering transformation group is the subgroup generated by equivalences in $\Ker (\kappa) $ which preserve $\Stabd(X)$.  
\end{theorem}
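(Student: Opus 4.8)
The plan is to follow Bridgeland's architecture, the key external input being his general deformation theorem, which guarantees that the map $\pi$ is a local homeomorphism onto an open subset of $\mca N(X)\otimes \bb C$; this is precisely what makes $\Stabd(X)$ a complex manifold. With this in hand the theorem splits into four tasks: locating the image, proving surjectivity onto $\mca P^+_0(X)$, upgrading the local homeomorphism to a covering, and identifying the deck group. Throughout I will use the equivariance $\pi \circ \Phi_* = \kappa(\Phi) \circ \pi$ for $\Phi \in \Aut(D(X))$, which follows from the fact that $\Phi_*$ acts on central charges through its cohomological action $\kappa(\Phi)$.

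First I would pin down the image. For $\sigma = (\mca A, Z) \in \Stabd(X)$ I want $Z^{\vee} \in \mca P^+(X)\setminus \bigcup_{\delta}\<\delta\>^{\perp}$. The positivity of the real $2$-plane spanned by $\mf{Re}(Z^{\vee})$ and $\mf{Im}(Z^{\vee})$ holds on $U(X)$ by the explicit form $Z^{\vee}=\exp(\beta+\sqrt{-1}\omega)$ of Proposition \ref{2.1}, and since it is an open and closed condition along the connected component it propagates to all of $\Stabd(X)$. The avoidance of the $(-2)$-walls is the conceptual point: every $\delta \in \Delta(X)$ is the Mukai vector of a spherical object $A$, and a nonzero object has nonzero central charge in any stability condition; if $Z^{\vee}\in \<\delta\>^{\perp}$ then $Z(A)=\<Z^{\vee},v(A)\>=0$, a contradiction. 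Hence $\pi(\Stabd(X)) \subseteq \mca P^+_0(X)$.

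Next, surjectivity. The explicit charges $\sigma_{(\beta,\omega)}$ realize every point of $\mf H_0(X)$, so $\pi(\bar U(X))$ is the closure in $\mca P^+_0(X)$ of one chamber of $\mca P^+(X)$ cut out by the $(-2)$-hyperplanes, namely the $\mr{GL}^+(2,\bb R)$-orbit of the section $\mf H_0(X)\to \mca P^+(X)$. By Proposition \ref{3.3} we have $\Stabd(X)=\bigcup_{\Phi \in W(X)}\Phi_*(\bar U(X))$, and equivariance gives $\pi(\Phi_*(\bar U(X)))=\kappa(\Phi)\,\pi(\bar U(X))$. Since the cohomological actions of the spherical twists generating $W(X)$ act as reflections in the hyperplanes $\<\delta\>^{\perp}$, the resulting group permutes the chambers of $\mca P^+_0(X)$ with $\pi(\bar U(X))$ as fundamental domain; therefore the union of translates exhausts $\mca P^+_0(X)$, giving surjectivity. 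The main obstacle enters at the third step, where I must promote the local homeomorphism $\pi$ onto the connected manifold $\mca P^+_0(X)$ to a genuine covering. Uniqueness of lifts is automatic (the fibers are discrete and the spaces Hausdorff), so the content is path lifting, which reduces to a properness-type estimate: if $\sigma_t$ lifts a path whose central charges $\pi(\sigma_t)$ converge to a point of $\mca P^+_0(X)$, then $\sigma_t$ itself converges in $\Stabd(X)$, i.e.\ the lift cannot escape to the boundary. This requires uniform control of the mass function and of the set of semistable classes along the path; the crucial fact is that, because the limiting central charge stays bounded away from every wall $\<\delta\>^{\perp}$, no spherical class becomes massless, and Bridgeland's boundedness estimates then prevent degeneration of the heart. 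This analytic estimate, rather than any formal covering-space argument, is the technical heart of the statement.

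Finally I would identify the deck group and verify that the covering is Galois. By the equivariance formula an autoequivalence $\Phi$ satisfies $\pi \circ \Phi_*=\pi$ exactly when $\kappa(\Phi)=\mathrm{id}$ and $\Phi_*$ preserves $\Stabd(X)$; thus the subgroup $G\subseteq \Ker(\kappa)$ of such equivalences acts by deck transformations, and the action is free since a nontrivial autoequivalence cannot fix a stability condition. It remains to show $G$ acts transitively on each fiber, which is both what makes the covering Galois and the last hard point: given $\sigma,\sigma'$ with $\pi(\sigma)=\pi(\sigma')$, I would write each, via Proposition \ref{3.3}, as $\Phi_*\bar\sigma$ and $\Phi'_*\bar\sigma'$ with $\bar\sigma,\bar\sigma' \in \bar U(X)$ and $\Phi,\Phi' \in W(X)\subseteq G$; comparing central charges through the explicit parametrization of $V(X)$ by $\mf H(X)$ forces $\bar\sigma$ and $\bar\sigma'$ to agree up to an element of $\Ker(\kappa)$, so that $\sigma$ and $\sigma'$ lie in a single $G$-orbit. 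Transitivity then yields a homeomorphism $\Stabd(X)/G \cong \mca P^+_0(X)$ together with the Galois property, and exhibits $G$ as the asserted covering transformation group.
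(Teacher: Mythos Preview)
The paper does not prove Theorem~\ref{covering}; it is quoted from \cite{Bri} without argument, so there is no proof in this paper to compare against. Judging your outline on its own merits, the overall architecture matches Bridgeland's, but there is one concrete error and one genuine gap.

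In the surjectivity step you assert that ``the cohomological actions of the spherical twists generating $W(X)$ act as reflections in the hyperplanes $\<\delta\>^{\perp}$'' and then use this to tile $\mca P^+_0(X)$ by chambers. But $W(X)$ is, by Definition~\ref{W(X)}, generated by the \emph{squares} $T_A^2$; since $T_A$ acts on $\mca N(X)$ as the reflection in $v(A)$, the square $T_A^2$ acts trivially. Thus $W(X)\subseteq\Ker(\kappa)$ --- which is precisely why its elements are candidates for deck transformations in the first place --- and hence $\pi(\Phi_*\bar U(X))=\pi(\bar U(X))$ for every $\Phi\in W(X)$, so no tiling occurs and your argument collapses. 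The correct route to surjectivity is that $\pi|_{\bar U(X)}$ already surjects onto $\mca P^+_0(X)$: the open set $U(X)$ covers $\mca V(X)\times\mr{GL}^+(2,\bb R)$, and the boundary components $W_A^{\pm}$ fill in the missing half-lines $S(v(A))$ minus the spherical points (compare Lemma~\ref{proposition} and the description of $\mca V(X)$ in Lemma~\ref{key lemma}).

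Your final transitivity argument is also incomplete. Having reduced to $\bar\sigma,\bar\sigma'\in\bar U(X)$ with equal central charge, you say that ``comparing central charges\dots forces $\bar\sigma$ and $\bar\sigma'$ to agree up to an element of $\Ker(\kappa)$''; but this is exactly the statement that $\pi$ is injective on $\bar U(X)$, which is a substantive input from~\cite{Bri} (two stability conditions with the same central charge need not have the same heart) that you have not supplied.
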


\begin{corollary}\label{covering2}
For a pair $(X,L)$, the induced map 
\[
\pi^{\mr{n}}\colon \Stab^{\mr{n}}(X) \to \mf{H}^+_0(X)
\]
is also a Galois covering map. 
\end{corollary}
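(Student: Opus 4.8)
The plan is to deduce this from the Galois covering $\pi\colon\Stabd(X)\to\mca P^+_0(X)$ of Theorem~\ref{covering} by passing to $\tGL^+(2,\bb R)$-quotients. The key point is that $\pi$ is equivariant along the covering homomorphism $\tGL^+(2,\bb R)\twoheadrightarrow\mr{GL}^+(2,\bb R)$: acting by $\tilde g$ replaces the central charge $Z$ by its composition with the associated change of basis of the positive $2$-plane, so $\pi(\sigma\cdot\tilde g)=\pi(\sigma)\cdot g$, where $g$ denotes the image of $\tilde g$. Writing $q\colon\mca P^+_0(X)\to\mf{H}_0(X)$ for the projection of the principal $\mr{GL}^+(2,\bb R)$-bundle $\mca P^+_0(X)\cong\mf{H}_0(X)\times\mr{GL}^+(2,\bb R)$ of Section~\ref{termi}, the composite $q\circ\pi$ is then constant on $\tGL^+(2,\bb R)$-orbits and hence descends to the asserted map $\pi^{\mr n}$, fitting into the commuting square
\[
\begin{CD}
\Stabd(X) @>\pi>> \mca P^+_0(X)\\
@VpVV @VVqV\\
\Stab^{\mr n}(X) @>\pi^{\mr n}>> \mf{H}_0(X)
\end{CD}
\]
with $p$ the quotient by $\tGL^+(2,\bb R)$. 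It then suffices to show that $\pi^{\mr n}$ is a Galois covering.

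For the covering property I would fix a contractible open ball $B\subset\mf{H}_0(X)$, so that $q^{-1}(B)\cong B\times\mr{GL}^+(2,\bb R)$ via the section of Section~\ref{termi}, and show that each connected component $E$ of $\pi^{-1}(q^{-1}(B))$ is homeomorphic to $B\times\tGL^+(2,\bb R)$ in such a way that $\pi|_E$ corresponds to $(\mr{id}_B,\ \tGL^+(2,\bb R)\to\mr{GL}^+(2,\bb R))$. By equivariance the $\tGL^+(2,\bb R)$-orbit of any point of $E$ again lies in $\pi^{-1}(q^{-1}(B))$ and is connected, so $E$ is $\tGL^+(2,\bb R)$-stable; hence $\pi|_E\colon E\to q^{-1}(B)$ is a covering which restricts on each $\tGL^+(2,\bb R)$-orbit to the universal covering $\tGL^+(2,\bb R)\to\mr{GL}^+(2,\bb R)$ of the corresponding $\mr{GL}^+(2,\bb R)$-orbit. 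Since the $\tGL^+(2,\bb R)$-action on $\Stabd(X)$ is free and proper, its orbits are embedded copies of $\tGL^+(2,\bb R)$, in particular simply connected; but among the connected covers of $q^{-1}(B)\cong B\times\mr{GL}^+(2,\bb R)$ the only one whose slices over the $\mr{GL}^+(2,\bb R)$-orbits are simply connected is the one induced by $\tGL^+(2,\bb R)\to\mr{GL}^+(2,\bb R)$, which forces the claim. Taking $\tGL^+(2,\bb R)$-quotients, $p(E)\cong E/\tGL^+(2,\bb R)\cong B$; the various $p(E)$ are pairwise disjoint (if two met, the corresponding $\tGL^+(2,\bb R)$-stable sets $E$ would meet, hence coincide) and together cover $(\pi^{\mr n})^{-1}(B)$, each mapped homeomorphically onto $B$ by $\pi^{\mr n}$. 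Hence $\pi^{\mr n}$ is a covering map.

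Finally, for the Galois property: by Theorem~\ref{covering} the deck group $G$ of $\pi$ consists of autoequivalences (those in $\Ker\kappa$ preserving $\Stabd(X)$), which commute with the $\tGL^+(2,\bb R)$-action and therefore descend to an action of $G$ on $\Stab^{\mr n}(X)$ by homeomorphisms $\bar\Phi$; since $\pi\circ\Phi_*=\pi$ and $p$ is surjective, $\pi^{\mr n}\circ\bar\Phi=\pi^{\mr n}$, so $G$ acts through deck transformations of $\pi^{\mr n}$. This action is transitive on each fibre: given $\tau_1,\tau_2\in\Stab^{\mr n}(X)$ with $\pi^{\mr n}(\tau_1)=\pi^{\mr n}(\tau_2)$, lift them to $\sigma_1,\sigma_2\in\Stabd(X)$; then $\pi(\sigma_1)$ and $\pi(\sigma_2)$ lie on one $\mr{GL}^+(2,\bb R)$-orbit, so after replacing $\sigma_2$ by $\sigma_2\cdot\tilde g$ for a suitable $\tilde g$ (which leaves $p(\sigma_2)=\tau_2$ unchanged) we may assume $\pi(\sigma_1)=\pi(\sigma_2)$; since $\pi$ is Galois, $\Phi_*\sigma_2=\sigma_1$ for some $\Phi\in G$, and then $\bar\Phi(\tau_2)=\tau_1$. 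Thus $\pi^{\mr n}$ is a Galois covering. The one step that is not a formal consequence of Theorem~\ref{covering} is the local analysis in the second paragraph — in particular excluding the intermediate finite covers of $B\times\mr{GL}^+(2,\bb R)$ — and the essential input there is that the $\tGL^+(2,\bb R)$-orbits are simply connected, i.e.\ the freeness and properness of the action; this is where I expect the (modest) difficulty to lie.
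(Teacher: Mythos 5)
Your proof is correct, but it takes a different route from the paper's. The paper does not trivialize over a ball and classify covers at all: it first passes to the intermediate quotient $\Stabd(X)/\bb Z[2]$, on which the $\tGL^+(2,\bb R)$-action descends to a genuine free $\mr{GL}^+(2,\bb R)$-action, so that $\pi'\colon \Stabd(X)/\bb Z[2]\to\mca P^+_0(X)$ becomes a $\mr{GL}^+(2,\bb R)$-equivariant Galois covering between $\mr{GL}^+(2,\bb R)$-bundles; since its deck transformations come from autoequivalences they commute with the group action, and the descent to $\pi^{\mr n}$ is then formal. The step you single out as the modest difficulty --- excluding the intermediate finite covers of $B\times\mr{GL}^+(2,\bb R)$ --- is exactly what the paper's $\bb Z[2]$-quotient absorbs: the generator $g$ of $\pi_1(\mr{GL}^+(2,\bb R))$ maps to the deck transformation $[2]$ (Proposition \ref{grouphom}), so after quotienting by $[2]$ the covering is automatically of degree one in the group direction, whereas you have to identify the cover in that direction as the universal one by hand. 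Your argument for that identification is sound, but note that properness of the $\tGL^+(2,\bb R)$-action is neither established in the paper nor needed: freeness alone suffices, because the orbit map $\tGL^+(2,\bb R)\to E$ lands in the preimage of the corresponding $\mr{GL}^+(2,\bb R)$-orbit and commutes with the projections, hence is a morphism of connected coverings of that orbit and therefore itself a covering map; being injective it is a homeomorphism, which forces the slice to be simply connected and rules out the finite covers. With that adjustment your local analysis, together with your descent of the deck group and the fibrewise transitivity argument, gives a complete and somewhat more hands-on proof of the same statement; the paper's version is shorter but leaves the bundle-theoretic descent implicit.
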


\begin{proof}
We have the following $\mr{GL}^+(2, \bb R)$-equivariant diagram:
\[
\begin{CD}
\Stabd(X)/ \bb Z[2] @>\pi'>> \mca P^+_0(X) \\
@VVV @VVV\\
\Stab^{\mr{n}}(X) @>\pi^{\mr{n}}>> \mf{H}^+_0(X) . 
\end{CD}
\] 
We note that both vertical maps are $\mr{GL}^+(2, \bb R)$-bundles and that $\pi'$ is also a Galois covering.

By Theorem \ref{covering} the covering transformation group of $\pi '$ is a subgroup of $\Aut (D(X))/ \bb Z[2]$. 
Hence the right $\mr{GL}^+(2, \bb R)$-action on $\Stabd(X)/ \bb Z[2]$ commutes with the covering transformations. 
Hence $\pi^{\mr{n}}$ is also a Galois covering. 
\end{proof}

\subsection{On the fundamental group of $\mca P^+_0(X)$}\label{pi_1}

We are interested in the fundamental group $\pi _1 (\mca P^+_0(X))$. 
Generally speaking, it is highly difficult to describe the above condition (\ref{BG-condition}) explicitly. 
Because of this difficulty, it becomes difficult to determine the relation between generators of $\pi _1(\mca P^+_0(X))$. 
Hence it seems impossible to determine the group structure of $\pi _1(\mca P^+_0(X))$. 
However, under the assumption $\rho (X)=1$ it becomes easier. 



\begin{definition}\label{ass.pt}
Let $\delta = r\+ c \+ s \in \Delta(X)$. 
An associated point $p \in \mf{H}(X)$ with $\delta  \in \Delta(X)$ is the point $p \in \mf{H}(X)$ such that  
$\<\exp(p),\delta \> = 0$. 
We also denote the point by $p(\delta)$ and call it a \textit{spherical point}. 
If $\delta$ is the Mukai vector of a spherical object $A$ we denote simply $p(v(A))$ by $p(A)$.  
\end{definition}

\begin{remark}\label{ass.pt.rmk}
Let $\delta \in \Delta (X)$ and we put $\delta = r\+ c\+ s$. 
Since $c^2 \geq 0$ we see $r \neq 0$. 
Thus we have the disjoint sum $\Delta (X) = \Delta ^+(X) \sqcup (-\Delta^+ (X))$. 

Now we have the explicit description of $p(\delta ) $ as follows:
\[
p (\delta )= (\frac{c}{r},\frac{1}{\sqrt{d}|r|} L) \in \mf{H}(X), 
\]
where we put $L^2 =2d$. 
Moreover one sees $p(\delta)= p(-\delta)$. 
\end{remark}

The key lemma of this subsection is that the set $\{ p(\delta) \in \mf{H}(X) | \delta \in \Delta (X) \}$ is discreet in $\mf{H}(X)$. 
To show this claim we introduce some notations. 

\begin{definition}
Let $\delta = r\+ c \+ s \in \Delta ^+(X)$. 

1. 
We define the set $\Delta ^{(i)}(X)$ by
\[
\Delta ^{(i)}(X) = \{  r\+ c \+ s \in \Delta ^+(X) | r\mbox{ is the $i$-th smallest in }\Delta^+(X) \}. 
\]
We also define the rank associated to $\Delta ^{(i)}(X)$ by $r$ for some $\delta = r\+ c \+ s \in \Delta ^{(i)}(X)$. 

2. 
We define the subset $\mca V(X)$ of $\mf{H}(X)$ as follows. 
\[
\mca V(X) = \{ (\beta, \omega) \in \mf{H}(X) | (\beta, \omega)\mbox{ satisfies the condition }(\ref{BG-condition})  \}. 
\]
As we remarked in Proposition \ref{2.1} this set is isomorphic to $V(X)$ consisting of stability conditions by the natural morphism $\pi$. 

3. 
Let $r_i$ be the rank associated to $\Delta ^{(i)}(X)$. 
We define the subset $\mca V^{(i)}(X)$ of $\mca V(X)$ by
\[
\mca V^{(i)}(X) = \{ (\beta, \omega) \in \mca V(X) | \omega ^2 > \frac{2}{r_i^2} \}. 
\]
\end{definition}

\begin{remark}\label{spherical sheaf}
Let $X$ be a projective (not necessary Picard rank one) K3 surface. 
For any $\delta =r\+ c\+ s \in \Delta (X)$ with $r\geq 0$, there exists a spherical sheaf $A$ on $X$ such that $v(A)= \delta$ by \cite{Kul90}.  
In particular if $r>0$ then we can take $A$ as a locally free sheaf. 
In addition if we assume $\mr{NS}(X) = \bb ZL$ then we see $A$ is Gieseker-stable by \cite[Proposition 3.14]{Muk}. 
Since we see $\gcd(r, n)=1$ where $n$ satisfies $nL =c$, $A$ is $\mu$-stable by \cite[Lemma 1.2.14]{HL}. 
\end{remark}

\begin{remark}
For instance $\Delta ^{(1)}(X)$ is the set of Mukai vectors of line bundles on $X$. 
Thus $\rank \Delta ^{(1)}(X)=1$ for any $(X,L)$. 
However for $i>1$, the rank of $\Delta ^{(i)}(X)$ depends on the degree $L^2$. 

Since $\rank \Delta ^{(1)}(X)=1$, we see $(\beta,\omega)$ is in $\mca V^{(1)}(X)$ if and only if $\omega ^2 >2$. 
We have the following infinite filtration of $\mca V^{(i)}(X)$ ($i =1,2 ,3 \cdots $)
\[
\mca V^{(1)}(X) \subset \mca V^{(2)}(X) \subset \cdots \subset \mca V^{(n)}(X) \subset \cdots \subset \mca V(X). 
\]
\end{remark}

\begin{lemma}\label{key lemma}
Notations being as above, 
	\begin{itemize}
		\item[(1)] the set $\mf S = \{ p(\delta) \in \mf{H}(X) | \delta \in \Delta (X)  \}$ is a discreet set in $\mf{H}(X)$. 
		\item[(2)] Furthermore the set $\mca V(X)$ is open in $\mf{H}(X)$. 
	\end{itemize}
\end{lemma}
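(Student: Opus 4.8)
The plan is to make everything completely explicit and reduce both parts to a single size bound on the rank of $\delta$. Write $\delta = r\+ c\+ s\in\Delta^+(X)$ with $c = nL$ for $n\in\bb Z$ (possible since $\mr{NS}(X)=\bb ZL$) and put $L^2 = 2d$. The condition $\delta^2=-2$ becomes $2dn^2-2rs=-2$, i.e. $dn^2-rs=-1$, so $s=(dn^2+1)/r$ is forced and $\delta\in\Delta^+(X)$ is encoded by the pair $(r,n)$ with $r>0$. By Remark~\ref{ass.pt.rmk} its spherical point is $p(\delta)=(n/r,\ \tfrac1{r\sqrt d})\in\mf{H}(X)\cong\bb H$, and $p(\delta)=p(-\delta)$, so $\mf S=\{(n/r,\tfrac1{r\sqrt d}) : r\+nL\+s\in\Delta^+(X)\}$.

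For part (1), let $K\subset\bb H$ be compact; then $\mathrm{Im}\,z\ge\varepsilon$ on $K$ for some $\varepsilon>0$ and $|\mathrm{Re}\,z|\le M$ on $K$ for some $M$. If $p(\delta)\in K$ then $\tfrac1{r\sqrt d}\ge\varepsilon$, which bounds $r$, and then $|n/r|\le M$ bounds $n$; hence $K$ contains only finitely many points of $\mf S$. Thus $\mf S$ has no accumulation point in $\mf{H}(X)$ and is discrete (it does accumulate onto $\partial\bb H$ as $r\to\infty$, which is harmless).

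For part (2), I would first compute that for $z\in\bb H$, $\langle\exp(zL),\delta\rangle=-rd\,z^2+2dn\,z-s$, and that, using $dn^2-rs=-1$, its two roots are $p(\delta)$ and $\overline{p(\delta)}$; hence $\langle\exp(zL),\delta\rangle=-rd\,(z-p(\delta))(z-\overline{p(\delta)})$. Writing $p(\delta)=a+\sqrt{-1}b$ with $a=n/r$ and $b=\tfrac1{r\sqrt d}>0$, an elementary inspection of real and imaginary parts shows that the bad locus $C_\delta:=\{z\in\bb H : \langle\exp(zL),\delta\rangle\in\bb R_{\le0}\}$ equals the vertical segment $\{\mathrm{Re}\,z=n/r,\ 0<\mathrm{Im}\,z\le\tfrac1{r\sqrt d}\}$ — a geodesic ray in $\bb H$ terminating at $p(\delta)$. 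Each $C_\delta$ is closed in $\bb H$, and $\mca V(X)=\mf{H}(X)\setminus\bigcup_{\delta\in\Delta^+(X)}C_\delta$ by definition of condition~(\ref{BG-condition}).

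It then remains to see that $\{C_\delta\}_{\delta\in\Delta^+(X)}$ is locally finite: a point of $C_\delta$ has imaginary part at most $\tfrac1{r\sqrt d}$, so if $C_\delta$ meets a ball $B(z_0,\rho)$ with $\rho<\tfrac12\mathrm{Im}\,z_0$ then $\tfrac1{r\sqrt d}\ge\tfrac12\mathrm{Im}\,z_0$ bounds $r$ and $|n/r|\le|\mathrm{Re}\,z_0|+\rho$ bounds $n$, leaving only finitely many $\delta$. A locally finite union of closed sets is closed, so $\mca V(X)$ is open. The step needing care is exactly this local finiteness: an arbitrary union of the closed conditions $\langle\exp(zL),\delta\rangle\in\bb R_{\le0}$ need not be closed, and the argument genuinely relies on the explicit fact — encoded in the formula for $p(\delta)$ — that these constraints escape to $\partial\bb H$ as the rank grows and do not accumulate in the interior. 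The polynomial factorization and the vertical-segment description of $C_\delta$ are routine computations but should be carried out carefully using $\delta^2=-2$.
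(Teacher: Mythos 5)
Your proof is correct and follows essentially the same route as the paper: both use the explicit coordinates $p(\delta)=(n/r,\tfrac{1}{r\sqrt d})$ to bound the rank (hence the number of relevant $\delta$) near any point, and both identify the locus where condition (\ref{BG-condition}) fails as the vertical segment of height $\tfrac{1}{r\sqrt d}$ over $n/r$ (which you verify by the quadratic factorization, while the paper asserts it and organizes the openness via the exhaustion $\mca V^{(i)}(X)$ instead of your direct local-finiteness argument). These differences are only organizational, so nothing further is needed.
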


\begin{proof}
Suppose that $\mr{NS}(X) = \bb Z L $ with $L^2=2d$. 
Let $p(\delta) $ be the spherical point of $\delta \in \Delta ^+(X)$. 
We put $\delta = r\+ c \+ s$ where $c = n L$ for some $n \in \bb Z$. 

Recall that $p(\delta ) $ is given by 
\[
p(\delta) = (\frac{nL}{r}, \frac{1}{\sqrt{d}r}L). 
\]
We also note that $\gcd (r,n)=1$ since $\delta ^2 =-2$ and $\mr{NS}(X) = \bb Z L$. 
Let $B_{\epsilon}$ be the open ball  whose center is $p(\delta)$ and the radius is $\epsilon$ (with respect to the usual metric). 
Since $r_{i+1} \geq r_i +1$ (where $r_i$ is the rank of $\Delta ^{(i)}(X)$) if $\epsilon$ is smaller than $\frac{1}{\sqrt{d}} (\frac{1}{r}- \frac{1}{r +1})$ we see $B_{\epsilon} \cap \mf{S} = \{ p(\delta) \}$.

We prove the second assertion. 
We define $S(\delta)$ for $\delta \in \Delta ^+(X)$ as follows:
\[
S(\delta) = \{ (\beta, \omega) \in \mf{H}(X) | \beta = \frac{c}{r}, 0< \omega ^2 \leq \frac{2}{r^2} \}. 
\]
Then one can check that 
\[
\mca V(X) = \mf{H}(X) \setminus \bigcup _{\delta \in \Delta ^+(X)} S(\delta ). 
\]
Hence we see 
\[
\mca V^{(i)}(X) = \{ (\beta, \omega) \in \mf{H}(X)  | \omega ^2 > \frac{2}{r_i^2 }  \} \setminus \bigcup _{\delta \in \Delta ^{(\leq i-1)} }S(\delta),  
\]
where $\Delta ^{(\leq i )} = \bigcup _{j=1}^i \Delta ^{(j)}(X)$. 
Since the set 
\[
\{ \frac{c}{r} | \delta = r \+ c \+ s \in \Delta ^{(\leq i)} \}
\]
is discreet in $\bb R L$, the set $\mca V^{(i)}(X)$ is open in $\mf{H}(X)$. 
Since we have 
\[
\mca V(X) = \bigcup _{i \in \bb N} \mca V^{(i)}(X),
\]
the set $\mca V(X)$ is open in $\mf H (X)$. 
\end{proof}

\begin{definition}\label{generator}
We set elements of the fundamental groups $\pi _1(\mf{H}_0(X))$ and of $\pi _1(GL^+(2, \bb R))$ as follows. 
\begin{itemize}
\item We define $\ell _{\delta } $ by the loop which turns round only the spherical point $p(\delta ) \in \mf{H}(X)$ counterclockwise;
	\begin{figure}[htbp]
		\begin{center}
		\includegraphics[height=23mm, clip]{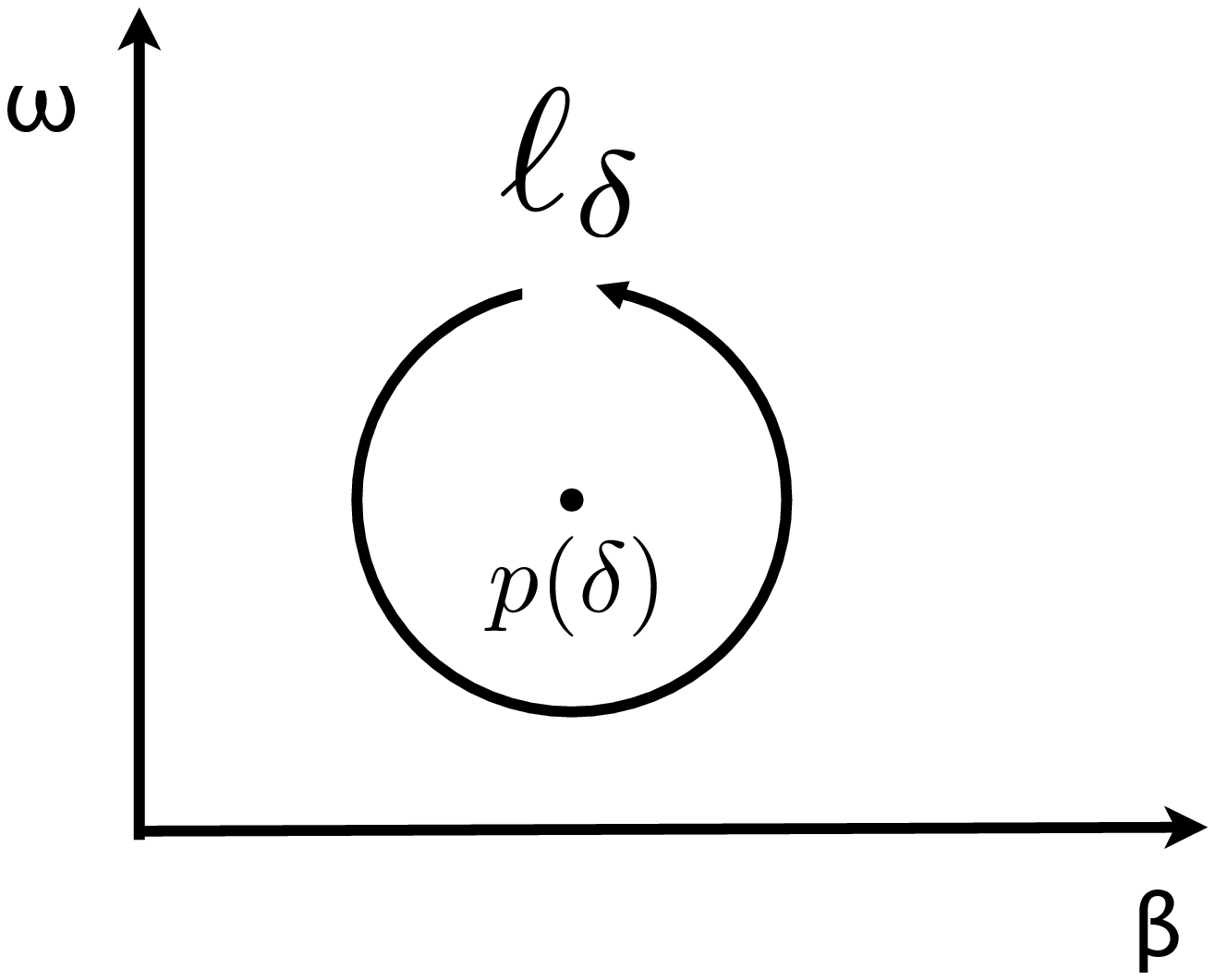}
		\end{center}
		\caption{For $p(\delta)$ 
we define the loop $\ell _{\delta}$ as the above direction. We also assume that there are no spherical points $p(\delta ')$ in the inside of $\ell _{\delta}$ except for $p(\delta)$ itself.  
}
		\label{fig:1}
	\end{figure}	
\item We define $g \in \pi _1(GL^+(2, \bb R))$ by 
\[
g\colon[0,1]\ni t \mapsto \begin{pmatrix} \cos (2 \pi t) & -\sin (2\pi t) \\ \sin (2 \pi t) & \cos (2 \pi t)   \end{pmatrix} \in GL^+(2, \bb R).
\]
\end{itemize}
We note that $g$ is a generator of $\pi _1(GL^+(2, \bb R))$ since $\pi _1(GL^+(2, \bb R)) \cong \pi _1(SO(2)) \cong \bb Z$. 
\end{definition}

\begin{proposition}\label{2.9}
The fundamental group $\pi _1(\mca P^+_0(X))$ is isomorphic to 
\[
\Big(\bigast _{\delta \in \Delta^+(X) } \bb Z \cdot \ell_{\delta } \Big)\times \bb Z \cdot g
\]
where $\bigast_{\delta \in \Delta^{+} } \bb Z \cdot \ell_{\delta }$ is a free product of infinite cyclic groups $\bb Z$ generated by $\ell_{\delta}$. 
\end{proposition}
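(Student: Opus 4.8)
The plan is to exploit the product decomposition $\mca P^+_0(X)\cong\mf H_0(X)\times\mr{GL}^+(2,\bb R)$ recorded at the end of \S\ref{termi}. Since the fundamental group of a product is the product of the fundamental groups, it suffices to compute $\pi_1(\mf H_0(X))$ and $\pi_1(\mr{GL}^+(2,\bb R))$ separately and to check that the loops $\ell_\delta$ and $g$ of Definition~\ref{generator} generate the respective factors. For $\mr{GL}^+(2,\bb R)$ this is immediate: as already noted in Definition~\ref{generator}, $\mr{GL}^+(2,\bb R)$ deformation retracts onto $SO(2)$, so $\pi_1(\mr{GL}^+(2,\bb R))\cong\bb Z$ is generated by $g$. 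Thus everything reduces to identifying $\pi_1(\mf H_0(X))$ with $\bigast_{\delta\in\Delta^+(X)}\bb Z\cdot\ell_\delta$.

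For this, recall that $\mf H(X)\cong\bb H$ is simply connected and that, by Lemma~\ref{key lemma}(1) (and the explicit formula $p(r\+c\+s)=(c/r,\ \tfrac1{\sqrt d|r|}L)$, which shows that $\mf S$ meets every compact subset of $\mf H(X)$ in a finite set), the set $\mf S=\{p(\delta)\mid\delta\in\Delta(X)\}$ is a closed discrete subset of $\mf H(X)$, so $\mf H_0(X)=\mf H(X)\setminus\mf S$ is a connected open surface. I would compute $\pi_1(\mf H_0(X))$ by the principle that the complement of a closed discrete set of points in a simply connected surface is free on the loops encircling the points. Concretely, exhaust $\mf H(X)$ by relatively compact simply connected open sets $O_1\subset O_2\subset\cdots$ with $\overline{O_i}\subset O_{i+1}$ and $\bigcup_i O_i=\mf H(X)$; each $O_i\setminus\mf S$ is then a disk with finitely many punctures, so van Kampen's theorem (equivalently, the homotopy equivalence of $O_i\setminus\mf S$ with a finite wedge of circles) gives that $\pi_1(O_i\setminus\mf S)$ is free on loops each encircling one point of $\mf S\cap O_i$; the inclusions $O_i\setminus\mf S\hookrightarrow O_{i+1}\setminus\mf S$ induce the evident free-factor inclusions; and since every loop and every null-homotopy in $\mf H_0(X)$ has compact image and hence lies in some $O_i\setminus\mf S$, we obtain $\pi_1(\mf H_0(X))=\varinjlim_i\pi_1(O_i\setminus\mf S)=\bigast_{p\in\mf S}\bb Z$, with one generator per point of $\mf S$, represented by a small counterclockwise loop. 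The main point requiring care is precisely this passage from finitely to infinitely many punctures — that $\pi_1$ of the exhaustion is the direct limit of the $\pi_1(O_i\setminus\mf S)$ and that this limit is the full free product indexed by $\mf S$, with no hidden relations — and this is where the closed-discreteness of $\mf S$ from Lemma~\ref{key lemma}(1) is essential.

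Finally I would match $\mf S$ with $\Delta^+(X)$. By Remark~\ref{ass.pt.rmk}, $\Delta(X)=\Delta^+(X)\sqcup(-\Delta^+(X))$ and $p(\delta)=p(-\delta)$, so $\mf S=\{p(\delta)\mid\delta\in\Delta^+(X)\}$; moreover $\delta\mapsto p(\delta)$ is injective on $\Delta^+(X)$, since for $\delta=r\+c\+s$ with $r>0$ the second coordinate of $p(\delta)$ determines $r$, the first coordinate then determines $c$, and $\delta^2=-2$ determines $s$. Hence $\delta\mapsto p(\delta)$ is a bijection $\Delta^+(X)\to\mf S$ carrying the small loop around $p(\delta)$ to $\ell_\delta$, so $\pi_1(\mf H_0(X))\cong\bigast_{\delta\in\Delta^+(X)}\bb Z\cdot\ell_\delta$. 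Combining this with the $\mr{GL}^+(2,\bb R)$-factor gives $\pi_1(\mca P^+_0(X))\cong\bigl(\bigast_{\delta\in\Delta^+(X)}\bb Z\cdot\ell_\delta\bigr)\times\bb Z\cdot g$, as claimed.
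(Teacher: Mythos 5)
Your proposal is correct and is essentially the paper's own argument: both reduce via the product decomposition $\mca P^+_0(X)\cong\mf H_0(X)\times\mr{GL}^+(2,\bb R)$ to computing $\pi_1(\mf H_0(X))$, and both use the discreteness of the spherical points (Lemma \ref{key lemma}) together with compactness of loops and homotopies to pass from the free group on finitely many punctures to the full free product. Your exhaustion/direct-limit formulation and the explicit injectivity check for $\delta\mapsto p(\delta)$ on $\Delta^+(X)$ are just slightly more systematic renderings of the steps the paper carries out with compact contractible subsets containing a given loop or homotopy.
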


\begin{proof}
Since $\mca P^+_0(X)$ is isomorphic to $\mf{D}^+_0 (X) \times GL^+(2, \bb R)$ we see $\pi _1(\mca P^+_0(X) ) \cong  \pi _1 (\mf D^+_0(X)) \times \bb Z\cdot g$. 
As we remarked before we have $\Delta (X) = \Delta ^+(X) \sqcup (-\Delta ^+(X))$. 
Hence we see
\[
\mf{D}^+_0(X) = \mf D^+(X) \setminus \bigcup _{\delta \in \Delta (X)} \< \delta \>^{\perp} = \mf D^+(X) \setminus \bigcup_{\delta \in \Delta ^+(X)} \< \delta  \>^{\perp} 
\]

Since $\mf{D}^+_0(X)$ is isomorphic to $\mf{H}_0(X)$ it is enough to show that 
\[
\pi _1(\mf{H}_0(X)) = \bigast _{\delta \in \Delta ^+} \bb Z \cdot \ell_{\delta}
\]

We choose a base point $p$ of $\mf{H}_0(X)$ so that $p = \sqrt{-1}\omega$ with $\omega ^2 \gg 2$. 
Let $\ell$ be the loop whose base point is $p$. 
Then there is a compact contractible subset $C$ whose interior $C^{\mathit{in}}$ contains $\ell$. 
Then the following set is finite:
\[
\{ p(\delta) \in C^{\mathit{in}} | \delta \in \Delta ^+(X)  \}. 
\]
Since the fundamental group of the complement of $n$-points in $C$ is the free group of rank $n$, 
we see the homotopy equivalence class of $\ell$ is uniquely given by 
\[
\ell _{\delta _1 } ^{k^1} \ell _{\delta _2}^{k_2} \cdots \ell _{\delta _m}^{k_m}
\]
where each $k_i \in \bb Z$. 
In fact if another loop $m$ is homotopy equivalent to $\ell$ by $H\colon[0,1] \times [0,1] \to \mf{H}_0(X)$, then there is a contractible compact set $C'$ such that $(C')^{\mathit{in}}$ contains the image of $H$. 
Since there are at most finite spherical point in $(C')^{\mathit{in}}$, we see the above representation is unique. 
Thus we have finished the proof. 
\end{proof}

To simplify the notations we denote $\ell _{v(A)}$ by $\ell _{A}$. 
By Remark \ref{spherical sheaf}, 
we see 
\[
\pi _1(\mf{H}_0(X)) = \< \ell _A | A\mbox{ is spherical and locally free} \> = \bigast_{A} \bb Z \ell _A. 
\]

\section{Hyperbolic structure on $\Stab ^{\mr{n}}(X)$}\label{3}

Let $\Stabd (X)$ be the connected components of $\Stab (X)$ introduced in $\S$2. 
In this section we discuss a hyperbolic structure on the normalized stability manifold $\Stab ^{\mr{n}}(X)$.

To simplify explanations of this section we always use the following notations. 
Let $(X_i, L_i)$ ($i=1,2$) be projective K3 surfaces with $\mr{NS}(X_i)= \bb ZL_i$ and let $\Phi \colon D(X_2) \to D(X_1)$ be an equivalence between them. 
The induced isometry $\mca N(X_2) \to \mca N(X_1)$ by $\Phi$ is denoted by $\Phi ^{\mca N}$. 

For a closed point $p_i \in X_i$ we set 
\[
v(\Phi (\mca O_{p_2})) = r_1 \+ n_1L_1 \+ s_1 \mbox{ and } v(\Phi^{-1}(\mca O_{p_1})) = r_2 \+ n_2L_2 \+ s_2. 
\]
Since $X_1$ and $X_2$ are Fourier-Mukai partners each other, we see $L_1^2 = L_2^2 = 2d$ for some $d \in \bb N$.

\begin{lemma}\label{3.1}
Notations being as above, 
\begin{itemize}
\item[(1)] $r_1=0$ if and only if $r_2=0$. In particular if $r_2=0$ then $\Phi ^{\mca N}(\mca O_{p_2}) = \pm v(\mca O_{p_1})  =\pm (0\+ 0\+ 1)$. 
\item[(2)] If $\Phi ^{\mca N}(\mca O_{p_2}) = 0\+ 0\+ 1$ then $\Phi ^{\mca N}$ is numerically equivalent to $(M \otimes ) ^{\mca N}$ where $M$ is in $\mr{Pic}(X_1)$ under the canonical identification $\mca N(X_2) \cong \mca N(X_1)$. 
\end{itemize}
\end{lemma}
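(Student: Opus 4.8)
For part (1), the plan is purely lattice-theoretic: I use that the Mukai vector $e:=v(\mca O_x)=0\+0\+1$ of a point sheaf is isotropic and that $\<e,v\>=-r$ for any $v=r\+c\+s$. Since $\Phi^{\mca N}$ is an isometry and $\Phi^{\mca N}(v(\Phi^{-1}(\mca O_{p_1})))=v(\mca O_{p_1})$, I would compute
\[
-r_2=\<v(\mca O_{p_2}),v(\Phi^{-1}(\mca O_{p_1}))\>=\<\Phi^{\mca N}v(\mca O_{p_2}),v(\mca O_{p_1})\>=\<v(\Phi(\mca O_{p_2})),v(\mca O_{p_1})\>=-r_1,
\]
so in fact $r_1=r_2$, and in particular $r_1=0\Leftrightarrow r_2=0$. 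If $r_2=0$, then $r_1=0$, and $v(\Phi(\mca O_{p_2}))^2=v(\mca O_{p_2})^2=0$ forces the $L_1$-coefficient of the rank-zero vector $0\+n_1L_1\+s_1$ to vanish (since $L_1^2>0$); finally $\Phi^{\mca N}$ is a lattice isomorphism and $0\+0\+1$ is primitive, so $s_1=\pm1$, i.e. $\Phi^{\mca N}(v(\mca O_{p_2}))=\pm v(\mca O_{p_1})$.

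For part (2), the strategy is to peel off a line-bundle twist and then classify what is left. Put $e_0=1\+0\+0$, $\ell=0\+L\+0$, $e_4=0\+0\+1$, using the canonical identification $\mca N(X_2)\cong\mca N(X_1)$, $L_2\mapsto L_1=:L$ (with $L^2=2d$); then $e_0,e_4$ span a hyperbolic plane and $\mca N=\<e_0,e_4\>\+\bb Z\ell$. The hypothesis is $\Phi^{\mca N}(e_4)=e_4$, so $\<\Phi^{\mca N}e_0,e_4\>=-1$ shows $\Phi^{\mca N}(e_0)$ has rank $1$, and $\<\Phi^{\mca N}e_0,\Phi^{\mca N}e_0\>=0$ forces $\Phi^{\mca N}(e_0)=1\+bL\+db^2=(L^{\otimes b}\otimes-)^{\mca N}(e_0)$ for some $b\in\bb Z$. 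Replacing $\Phi$ by $\Psi:=(L^{\otimes(-b)}\otimes-)\circ\Phi$ — still an equivalence $D(X_2)\to D(X_1)$ with $\Psi^{\mca N}(v(\mca O_{p_2}))=0\+0\+1$, and for which it suffices to prove the claim — I may assume $\Phi^{\mca N}$ fixes $e_0$ and $e_4$, hence the hyperbolic plane $\<e_0,e_4\>$ pointwise, hence acts on $\<e_0,e_4\>^{\perp}=\bb Z\ell$ by $\pm1$. So $\Phi^{\mca N}$ is either $\mr{id}=(\mca O_{X_1}\otimes-)^{\mca N}$, or the reflection $\rho$ fixing $e_0,e_4$ and sending $\ell\mapsto-\ell$.

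Ruling out $\rho$ is the main obstacle. For this I would invoke that $\Phi$ induces an \emph{orientation-preserving} Hodge isometry $\Phi_{\ast}\colon\widetilde H(X_2,\bb Z)\to\widetilde H(X_1,\bb Z)$ of the full Mukai lattices — this is precisely why $O^+_{\mathrm{Hodge}}$, rather than the whole Hodge isometry group, governs $\Aut(D(X))$ in the introduction. With respect to the orthogonal splitting $\widetilde H(X_i)_{\bb R}=\mca N(X_i)_{\bb R}\+T(X_i)_{\bb R}$ into algebraic and transcendental parts, $\Phi_{\ast}$ restricts on $T(X_i)$ to a Hodge isometry sending $\omega_{X_2}$ to $\lambda\omega_{X_1}$ for some $\lambda\in\bb C^{\ast}$; since the positive-definite $2$-plane of $T(X_i)_{\bb R}$ is spanned, with its canonical orientation, by $\mathrm{Re}\,\omega_{X_i},\mathrm{Im}\,\omega_{X_i}$ and multiplication by $\lambda$ is orientation-preserving on $\bb C$, this restriction preserves that orientation. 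Hence $\Phi^{\mca N}=\Phi_{\ast}|_{\mca N}$ must preserve the orientation of a positive-definite $2$-plane of $\mca N(X_1)_{\bb R}$; but $\rho$ is $-1$ on the positive direction $\ell$ and $+1$ on the positive direction $e_0-e_4$, so it reverses it — a contradiction. Therefore $\Phi^{\mca N}=\mr{id}$, and unwinding the twist gives $\Phi^{\mca N}=(L^{\otimes b}\otimes-)^{\mca N}$, so one may take $M=L^{\otimes b}$.

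An alternative to this last step would be to show first that $\Psi$ (after the twist) carries every $\mca O_{p_2}$ to a shift of a skyscraper sheaf — using $v(\Psi(\mca O_{p_2}))=v(\mca O_{p_1})$ and the characterization of point-like families — and then quote the classification of such equivalences as $\Psi\cong(N\otimes-)\circ f_{\ast}\circ[2k]$; but this only trades the orientation input for the problem of recognizing a skyscraper from its Mukai vector, which does not seem easier.
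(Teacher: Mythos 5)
Your proof is correct and takes essentially the same route as the paper: for (1) isotropy and primitivity of $v(\Phi(\mca O_{p_2}))$, and for (2) showing $\Phi^{\mca N}(1\+ 0\+ 0)=v(M)$ for a line bundle $M$, untwisting by $M$ so that the hyperbolic plane $\langle 1\+0\+0,\ 0\+0\+1\rangle$ is fixed, and invoking the orientation result of \cite{HMS} to exclude $L_2\mapsto -L_1$. The only differences are cosmetic: your pairing identity gives the slightly sharper fact $r_1=r_2$ directly, and you spell out the reduction from orientation-preservation on the positive four-space to the algebraic positive two-plane, which the paper leaves to the citation.
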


\begin{proof}
By the symmetry it is enough to show that $r_2=0$ under the assumption $r_1=0$. 
If $r_1=0$, since $v(\Phi (\mca O_{p_2}))$ is isotropic, we see 
$n_1^2 L_1^2 = 0$. Thus $n_1=0$. 
Moreover since $v(\Phi (\mca O_{p_2}))$ is primitive, $s_1$ should be $\pm 1$. 
Hence $\Phi ^{\mca N}(0\+ 0\+ 1) = \pm (0\+ 0\+ 1)$. 
This gives the proof of the first assertion. 

Second assertion essentially follows from the argument in the proof for \cite[Corollary 10.12]{Huy}. 
Hence we recall his arguments. 

Since $\rho (X_i)=1$, there is the canonical isomorphism $f \colon \mca N(X_2) \to \mca N(X_1)$ where 
$f(0\+0\+1) =0\+0\+1, f(0\+ L_{2}\+0)=0\+L_{1}\+0$ and $f(0\+0\+1)=0\+0\+1$. 
We show that $\Phi ^{\mca N}=(\otimes M)^{\mca N }$ ($\exists M \in \mr{Pic}(X_1)$) under the canonical identification $f\colon \mca N(X_2)\to \mca N(X_1)$.

One can check easily 
\[
v(\Phi ^{\mca N} (1\+ 0\+ 0)) = 1 \+ M \+ \frac{M^2}{2} \ (\exists M \in \mr{Pic}(X_1)), 
\]
by using the facts $\< 1\+ 0 \+ 0, v(\mca O_{p_2}) \> =-1$ and $\< 1\+ 0\+ 0\>^2 =0$. 
Now consider the functor 
\[
\Psi = (\otimes M^{-1}\circ \Phi)\colon D(X_2) \to D(X_1) \to D(X_1). 
\]
Then we see $\Psi ^{\mca N}(0\+ 0\+ 1) = 0\+ 0\+ 1$ and $\Psi ^{\mca N}(1\+ 0\+ 0) = 1\+ 0\+ 0$. 
Thus $\Psi ^{\mca N}$ induces the isomorphism 
\[
\Psi ^{\mca N} \colon \mr{NS}(X_2) \to \mr{NS}(X_1). 
\]
Since $\mr{NS}(X_i) = \bb Z L_i$ we see $\Psi ^{\mca N} (L_2) = \pm L_1$. 
Since any equivalence preserves the orientations by \cite{HMS} we see 
$\Psi ^{\mca N} (L_2) = L_1$. 
This gives the proof of the second assertion. 
\end{proof}

\begin{lemma}\label{3.2}
For $(\beta _i, \omega _i) \in \mf{H}(X_i)$ $(i=1,2)$, 
we put $\beta _i  + \sqrt{-1} \omega _i = (x_i + \sqrt{-1}y_i) L_i$. 
\begin{itemize}
\item[(1)] For any $\beta _2 + \sqrt{-1}\omega _2\in \mf{H}(X_2)$, there exist $\beta _1 + \sqrt{-1}\omega _1 \in \mf{H}(X_1)$ and $\lambda \in \bb C^*$ such that $\Phi ^{\mca N} (\exp (\beta _2 + \sqrt{-1} \omega _2)) = \lambda \exp(\beta _1 + \sqrt{-1}\omega _1)$. 
\item[(2)] If $r_1\neq 0$ then $r_1 r_2 >0$. Furthermore we have
\[
x_1  + \sqrt{-1}y_1 = \frac{1}{d \sqrt{r_1 r_2}} \cdot \frac{-1}{(x_2+ \sqrt{-1}y_2)- \frac{n_2}{r_2} } +\frac{n_1}{r_1}. 
\]
In particular this gives a linear fractional transformation on $\bb H$. 
\end{itemize}
\end{lemma}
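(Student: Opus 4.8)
The plan is to derive part~(1) from the isotropy of $\exp(\beta_2+\sqrt{-1}\omega_2)$ together with the fact that $\Phi^{\mca N}$ is an orientation-preserving isometry, and then to obtain part~(2) by computing how $\Phi^{\mca N}$ transforms the central charges of the skyscraper sheaves $\mca O_{p_i}$ via the identity~(\ref{key}) of Remark~\ref{Kaw4.0}. Throughout I write $\tau_i=x_i+\sqrt{-1}y_i$, so that $\exp(\beta_i+\sqrt{-1}\omega_i)=1\+\tau_iL_i\+d\tau_i^2$; an elementary computation shows this vector is always isotropic for the Mukai pairing, and that it lies in $\mca P^+(X_i)$ exactly when $y_i>0$.

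For part~(1): since $\Phi^{\mca N}$ is an isometry, $w:=\Phi^{\mca N}(\exp(\beta_2+\sqrt{-1}\omega_2))$ is isotropic, and since every equivalence preserves the orientation of the Mukai lattice~\cite{HMS}, $w\in\mca P^+(X_1)$. Writing $w=r'\+\ell L_1\+s'$ with $r',\ell,s'\in\bb C$, I would first check $r'\neq0$: an isotropic vector of rank $0$ in $\mca N(X_1)\otimes\bb C$ must have the form $0\+0\+s'$ (because $L_1^2>0$), and such a vector cannot span a positive $2$-plane. Then, putting $\lambda:=r'$ and $x_1+\sqrt{-1}y_1:=\ell/r'$, the isotropy of $w$ forces $s'=\lambda d(x_1+\sqrt{-1}y_1)^2$, so that $w=\lambda\exp(\beta_1+\sqrt{-1}\omega_1)$ with $\beta_1=x_1L_1$ and $\omega_1=y_1L_1$. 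Finally, because the $\bb C^{*}$-action on $\mca N(X_1)\otimes\bb C$ is a subaction of the orientation-preserving $\mr{GL}^+(2,\bb R)$-action, $\exp(\beta_1+\sqrt{-1}\omega_1)\in\mca P^+(X_1)$, hence $y_1>0$ by the criterion above, i.e. $(\beta_1,\omega_1)\in\mf H(X_1)$.

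For part~(2): first $r_2\neq0$ by Lemma~\ref{3.1}(1), so the $n_i/r_i$ are defined. Both $v(\Phi(\mca O_{p_2}))=r_1\+n_1L_1\+s_1$ and $v(\Phi^{-1}(\mca O_{p_1}))=r_2\+n_2L_2\+s_2$ are isotropic (isometry plus $v(\mca O_x)^2=0$), so applying~(\ref{key}) to them --- its first summand vanishing by isotropy --- gives $Z_{(\beta_2,\omega_2)}(\Phi^{-1}(\mca O_{p_1}))=-d\,r_2(\tau_2-n_2/r_2)^2$ and $\<\exp(\beta_1+\sqrt{-1}\omega_1),v(\Phi(\mca O_{p_2}))\>=-d\,r_1(\tau_1-n_1/r_1)^2$. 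Combining the relation $w=\lambda\exp(\beta_1+\sqrt{-1}\omega_1)$ from part~(1), the isometry property of $\Phi^{\mca N}$, and the normalisations $\<\exp(\beta_i+\sqrt{-1}\omega_i),v(\mca O_{p_i})\>=-1$, pairing $w$ with $v(\mca O_{p_1})$ gives $\lambda=d\,r_2(\tau_2-n_2/r_2)^2$, and pairing $w$ with $v(\Phi(\mca O_{p_2}))$ gives $\lambda\,d\,r_1(\tau_1-n_1/r_1)^2=1$. Eliminating $\lambda$ yields
\[
d^2r_1r_2\left(\tau_1-\frac{n_1}{r_1}\right)^2\left(\tau_2-\frac{n_2}{r_2}\right)^2=1 .
\]
Now set $h(\tau_2):=(\tau_1-n_1/r_1)(\tau_2-n_2/r_2)$; it is holomorphic in $\tau_2\in\bb H$ since $\Phi^{\mca N}$ is $\bb C$-linear and $\tau_2\mapsto\exp(\beta_2+\sqrt{-1}\omega_2)$ is polynomial, and $h^2\equiv 1/(d^2r_1r_2)$ is a nonzero constant, so $h$ is itself a constant $\mu$ with $\mu^2\in\bb R^{*}$; hence $\mu$ is real or purely imaginary according as $r_1r_2>0$ or $r_1r_2<0$. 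Since $\tau_1=n_1/r_1+\mu/(\tau_2-n_2/r_2)$ and inversion maps $\bb H$ onto the lower half-plane, the image of $\tau_2\mapsto\tau_1$ is $n_1/r_1+\mu\cdot(-\bb H)$, which is contained in $\bb H$ --- as it must be by part~(1) --- only when $\mu$ is a negative real number. Therefore $r_1r_2>0$, $\mu=-1/(d\sqrt{r_1r_2})$, and the formula of the statement follows; it is manifestly a linear fractional transformation of $\bb H$.

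The step I expect to be the main obstacle is this last one: selecting the correct branch of the square root, and thereby upgrading $r_1r_2\neq0$ to $r_1r_2>0$. It hinges on combining the holomorphic --- hence constant --- nature of $h$ with the fact that the induced map must send $\bb H$ into $\bb H$, which itself rests on the orientation-preservation of equivalences used in part~(1). The rest is bookkeeping with the Mukai pairing and the identity~(\ref{key}).
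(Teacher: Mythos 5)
Your proposal is correct and takes essentially the same route as the paper: part (1) via isotropy, positivity of the $2$-plane and the orientation result of \cite{HMS} to write the image as $\lambda\exp(\beta_1+\sqrt{-1}\omega_1)$ with $y_1>0$, and part (2) by pairing with the skyscraper vectors and using (\ref{key}) with vanishing first summand to get $d^2r_1r_2(\tau_1-\frac{n_1}{r_1})^2(\tau_2-\frac{n_2}{r_2})^2=1$, then forcing $r_1r_2>0$ and the sign from the requirement that the image stay in $\bb H$. Your constancy-of-$h$ argument is merely a slightly more explicit way of fixing the square-root branch than the paper's direct assertion at this step.
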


\begin{proof}
We put $\mho _2 = \exp(\beta _2 + \sqrt{-1}\omega_2)$ and $\Phi ^{\mca N} (\mho _2) = u\+ v \+ w$. 
Since we have $\mho _2^2=0$ and $\mho _2 \bar \mho _2 >0$, 
we see the following:
\begin{itemize}
\item[(a)] $v ^2 =2u w$ and 
\item[(b)] $v \bar v - u \bar w - \bar u w >0$. 
\end{itemize}
If $u=0$ then $v^2 $ should be $0$. 
Since we have $v^2 \geq 0$ by the assumption, we see $\Phi ^{\mca N}(\mho _2) = 0\+ 0\+ w$. 
This contradicts the second inequality. 
Thus $u$ should not be $0$ and we see 
\begin{eqnarray*}
\Phi ^{\mca N}(\mho _2)	&=& u (1\+ \frac{v}{u} \+ \frac{w}{u}) \\
					&=& u \Big(1\+ \frac{v}{u} \+ \frac{1}{2}\Big(\frac{v}{u}\Big)^2\Big)_.
\end{eqnarray*}
Since $\frac{v}{u}$ is in $\mr{NS}(X)\otimes \bb C$ we can put $\frac{v}{u} = (x + \sqrt{-1}y) L_1$ for some $(x, y) \in \bb R^2$. 
By the inequality of (b), we see $y \neq0$. 
Since $\Phi $ preserves the orientation by \cite{HMS}, we see $y >0$. 
Thus we have proved the first assertion.

We prove the second assertion. 
By the first assertion we put 
\[
\Phi ^{\mca N} (\exp(\beta _2 + \sqrt{-1}\omega _2)) = \lambda \exp(\beta _1 + \sqrt{-1}\omega _1). 
\]
Then we see 
\begin{eqnarray*}
\lambda	&=&	- \< \Phi ^{\mca N} (\exp(\beta _2 + \sqrt{-1} \omega _2)), v(\mca O_{p_1})  \> \\
			&=&	-\< \exp(\beta _2+ \sqrt{-1}\omega _2),  v(\Phi ^{-1}(\mca O_{p_1}) )\> \\
			&=& - Z_{(\beta _2, \omega _2)} (\Phi ^{-1}(\mca O_{p_1})), 
\end{eqnarray*}
and
\begin{eqnarray*}
-1		&=&	\< \exp(\beta _2 + \sqrt{-1}\omega _2), v (\mca O_{p_2}) \>\\
		&=&	\< \Phi ^{\mca N}(\exp(\beta _2 + \sqrt{-1}\omega _2)), v(\Phi (\mca O_{p_2})) \> \\
		&=& \lambda \cdot Z_{(\beta _1, \omega _1)} (\Phi (\mca O_{p_2})). 
\end{eqnarray*}
Thus we have 
\[
1 = Z_{(\beta _2, \omega _2)}(\Phi ^{-1}(\mca O_{p_1}))  \cdot  Z_{(\beta _1, \omega _1)} (\Phi (\mca O_{p_2})) 
\]

By Lemma \ref{3.1} we see $r_1 \neq 0$ and $r_2 \neq 0$. 
Now recall Remark \ref{Kaw4.0}. 
Since $v(\Phi (\mca O_{p_2}))^2= v(\Phi ^{-1}(\mca O_{p_1}))^2 =0$, we have 
\[
Z_{(\beta _2, \omega _2)} (\Phi ^{-1}(\mca O_{p_1})) = \frac{r_2}{2} \Big( y_2+ \sqrt{-1} \big( \frac{n_2}{r_2} - x_2 \big)   \Big)^2 L_2^2
\]
and 
\[
Z_{(\beta _1, \omega _1)} (\Phi (\mca O_{p_2})) = \frac{r_1}{2} \Big( y_1+ \sqrt{-1} \big( \frac{n_1}{r_1} - x_1 \big)   \Big)^2 L_1^2. 
\]
Since $L_1^2 = L_2 ^2 = 2d$ 
we see 
\begin{equation*}
(x_1- \frac{n_1}{r_1}) + \sqrt{-1}y_1 = \frac{\pm 1}{d \sqrt{r_1r_2}} \cdot \frac{1}{(x_2- \frac{n_2}{r_2})+\sqrt{-1}y_2}.  \label{eqn1}
\end{equation*}
Since the left hand side is in the upper half plane $\bb H$, 
$\sqrt{r_1r_2}$ should be a real number. 
Thus we see $r_1 r_2 >0$. 
Furthermore, since the imaginary part of the left hand side is positive we have
\[
(x_1- \frac{n_1}{r_1}) + \sqrt{-1}y_1 = \frac{-1}{d \sqrt{r_1r_2}} \cdot \frac{1}{(x_2- \frac{n_2}{r_2})+\sqrt{-1}y_2}.  
\]
Thus we have finished the proof. 
\end{proof}

Recall that $\Stab ^{\mr{n}}(X) = \Stabd (X) / \tGL^+(2, \bb R)$.

\begin{theorem}\label{thm1}
Assume that $\rho (X) =1$. 
\begin{itemize}
\item[(1)] $\Stab ^{\mr{n}} (X)$ is a hyperbolic 2 dimensional manifold. 
\item[(2)] Let $Y$ be a Fourier-Mukai partner of $X$ and $\Phi \colon D(Y) \to D(X)$ an equivalence. Suppose that $\Phi$ preserves the distinguished component. 
Then the induced morphism $\Phi _*^{\mr{n}} \colon \Stab ^{\mr{n}}(Y) \to \Stab ^{\mr{n}} (X)$ is an isometry with respect to the hyperbolic metric. 
\end{itemize}
\end{theorem}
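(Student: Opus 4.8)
The plan is to realise $\Stab^{\mr{n}}(X)$ as a space locally modelled on an open subset of $\bb H$, transport the standard hyperbolic metric through the covering map $\pi^{\mr{n}}$, and then identify the relevant transition maps with elements of $\mathrm{PSL}(2,\bb R)$ by means of Lemma \ref{3.2}.

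\emph{Part (1).} Recall that $\Stabd(X)$ is a complex manifold of complex dimension $\rank\mca N(X)=3$, while $\tGL^+(2,\bb R)$ acts freely with $4$-dimensional orbits, so $\dim_{\bb R}\Stab^{\mr{n}}(X)=2\cdot 3-4=2$. By Lemma \ref{key lemma}(1) the set $\mf{S}$ of spherical points is discrete in $\mf{H}(X)$, hence $\mf{H}^+_0(X)=\mf{H}(X)\setminus\mf{S}$ is open in $\mf{H}(X)\cong\bb H$ and carries the restriction $g_{\bb H}$ of the standard hyperbolic metric. By Corollary \ref{covering2} the map $\pi^{\mr{n}}\colon\Stab^{\mr{n}}(X)\to\mf{H}^+_0(X)$ is a covering, in particular a local homeomorphism, and I define the hyperbolic metric on $\Stab^{\mr{n}}(X)$ to be $(\pi^{\mr{n}})^{*}g_{\bb H}$; this makes $\Stab^{\mr{n}}(X)$ a hyperbolic surface and $\pi^{\mr{n}}$ a local isometry. (Equivalently, one can use Proposition \ref{3.3} to cover $\Stab^{\mr{n}}(X)$ by the translates of the chart $\mca V(X)\subset\bb H$ under $W(X)$ and apply Lemma \ref{3.2} with $X_1=X_2=X$ and $\Phi\in W(X)$ to see that the transition maps lie in $\mathrm{PSL}(2,\bb R)$; this yields the same metric.)

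\emph{Part (2).} The equivalence $\Phi_*\colon\Stabd(Y)\to\Stabd(X)$ commutes with the right $\tGL^+(2,\bb R)$-actions and, by hypothesis, is an isomorphism, so it descends to $\Phi_*^{\mr{n}}\colon\Stab^{\mr{n}}(Y)\to\Stab^{\mr{n}}(X)$. The key point is the commuting square
\[
\begin{CD}
\Stab^{\mr{n}}(Y) @>{\Phi_*^{\mr{n}}}>> \Stab^{\mr{n}}(X) \\
@V{\pi_Y^{\mr{n}}}VV @VV{\pi_X^{\mr{n}}}V \\
\mf{H}^+_0(Y) @>{\bar\Phi}>> \mf{H}^+_0(X)
\end{CD}
\]
where $\bar\Phi$ is induced on quotients by the $\bb C$-linear extension of the isometry $\Phi^{\mca N}$. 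Indeed $\Phi^{\mca N}$ carries $\Delta(Y)$ onto $\Delta(X)$, preserves the distinguished component of $\mca P$ (by the orientation theorem of \cite{HMS}), and is $\mr{GL}^+(2,\bb R)$-equivariant (being $\bb C$-linear, it commutes with change of basis of the defining $2$-planes), so it restricts to $\mca P^+_0(Y)\to\mca P^+_0(X)$ and descends to $\mf{H}^+_0(Y)\to\mf{H}^+_0(X)$; and the square commutes since $\pi_X\circ\Phi_*=\Phi^{\mca N}\circ\pi_Y$ by the definition of $\pi$. Now Lemma \ref{3.2} computes $\bar\Phi$ in the coordinate $\bb H\cong\mf{H}(X_i)$. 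If $\Phi(\mca O_{p_2})$ has nonzero rank, then $\bar\Phi$ is the linear fractional transformation
\[
z\ \longmapsto\ \frac{-1}{d\sqrt{r_1 r_2}\,\bigl(z-\tfrac{n_2}{r_2}\bigr)}+\frac{n_1}{r_1},\qquad r_1 r_2>0,\ d>0,
\]
whose associated matrix has determinant $\tfrac{1}{d\sqrt{r_1 r_2}}>0$, so it represents an element of $\mathrm{PGL}^+(2,\bb R)=\mathrm{PSL}(2,\bb R)$. If instead $\Phi(\mca O_{p_2})$ has rank $0$, then by Lemma \ref{3.1} $\Phi^{\mca N}$ is, up to sign, tensoring by a line bundle $M=mL_1$, so $\bar\Phi$ is the translation $z\mapsto z+m$, again in $\mathrm{PSL}(2,\bb R)$. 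In either case $\bar\Phi$ is an isometry of $(\bb H,g_{\bb H})$, whence
\[
(\Phi_*^{\mr{n}})^{*}(\pi_X^{\mr{n}})^{*}g_{\bb H}=(\bar\Phi\circ\pi_Y^{\mr{n}})^{*}g_{\bb H}=(\pi_Y^{\mr{n}})^{*}\bar\Phi^{*}g_{\bb H}=(\pi_Y^{\mr{n}})^{*}g_{\bb H},
\]
that is, $\Phi_*^{\mr{n}}$ is an isometry.

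Most of the substance is already packaged in Lemma \ref{3.2} (the shape of the period identification as a linear fractional transformation) and Corollary \ref{covering2} (the covering property); the main obstacle, such as it is, is bookkeeping. The delicate points are: that $\Phi_*$ is genuinely $\tGL^+(2,\bb R)$-equivariant, so that $\Phi_*^{\mr{n}}$ is well defined and the vertical maps in the square above are the bundle projections; that the square commutes with the correct variance of $\Phi^{\mca N}$, so that $\bar\Phi$ is \emph{exactly} the transformation of Lemma \ref{3.2}(2) under the identification in which $(\beta_2,\omega_2)$ labels the period of $\sigma\in V(Y)$ and $(\beta_1,\omega_1)$ that of $\Phi_*\sigma$; and disposing of the rank-$0$ case via Lemma \ref{3.1}. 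I do not anticipate any serious difficulty beyond these verifications.
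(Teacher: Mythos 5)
Your proposal is correct and follows essentially the same route as the paper: pull back the hyperbolic metric of $\mf{H}_0(X)\subset\bb H$ (open by Lemma \ref{key lemma}) through the covering map of Corollary \ref{covering2}, then use Lemma \ref{3.2} to identify the induced map on periods as a linear fractional transformation with positive determinant, handling the rank-zero case via Lemma \ref{3.1} as a translation. The extra bookkeeping you supply (equivariance, commuting square, determinant check) is exactly what the paper leaves implicit.
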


\begin{proof}
By Corollary \ref{covering2}, we have the normalized covering map
\[
\pi^{\mr{n}}\colon \Stab^{\mr{n}}(X) \to \mf{H}_0(X).
\]
Since $\mf{H}_0(X)$ is isomorphic to the open subset of $\bb H$ by Lemma \ref{key lemma}, we can define the hyperbolic metric on $\mf{H}_0(X)$ which is given by 
\[
ds^2 = \frac{dx^2 + dy^2}{y^2}, 
\]
where $x+\sqrt{-1}y \in \bb H$. 
Since $\pi^{\mr{n}}$ is a covering map, we can also define the hyperbolic metric on $\Stab^{\mr{n}}(X)$. 
Thus $\Stab^{\mr{n}}(X)$ is hyperbolic. 
%

Now we prove the second assertion. 
If $v(\Phi (\mca O_y))$ is not $\pm (0\+ 0\+ 1)$ by Lemma \ref{3.2}, we see that the induced morphism between $\mf{H}_0(Y) \to \mf {H}_0(X)$ is given by the linearly fractional transformation. 
Since $\pi^{\mr{n}}$ is an isometry, $\Phi^{\mr{n}}_*$ is also an isometry. 
Suppose that $v(\Phi (\mca O_y)) = \pm(0\+ 0\+ 1)$. 
If necessary by taking a shift $[1]$ which gives the trivial action on $\mf{H}(X)$  we can assume that $v(\Phi (\mca O_y)) = 0\+ 0\+ 1$. 
Then, by Lemma \ref{3.1}, the induced action on $\bb H$ is given by a parallel transformation $z \mapsto z +n$ for some $n \in \bb Z$. 
Thus we have finished the proof. 
\end{proof}

\section{Simply connectedness of $\Stab ^{\mr{n}}(X)$}\label{4}

In this section we always assume $\rho (X) =1$. 
Then, as was shown in the previous section, $\Stab ^{\mr{n}}(X)$ is a hyperbolic manifold.  
By using the hyperbolic structure, we shall discuss the simply connectedness of $\Stabd (X)$. 
Namely we show the following:

\begin{theorem}\label{thm2}
The following conditions are equivalent. 
\begin{itemize}
\item[(1)] $\Stabd(X)$ is simply connected. 
\item[(2)] $\Stab ^{\mr{n}}(X)$ is isomorphic to the upper half plane $\bb H$. 
\item[(3)] $W(X)$ is isomorphic to the free group generated by $T_A^2$:
\[
W(X)= \bigast_{A} (\bb Z\cdot T_A^2)  ,  
\]
where $A$ runs through all spherical locally free sheaves. 
\end{itemize}
\end{theorem}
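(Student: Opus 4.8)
The plan is to establish the cycle of implications $(1)\Rightarrow(2)\Rightarrow(3)\Rightarrow(1)$, using throughout that $\Stab^{\mr n}(X)$ is a connected hyperbolic $2$-manifold (Theorem \ref{thm1}) equipped with the Galois covering $\pi^{\mr n}\colon\Stab^{\mr n}(X)\to\mf H_0(X)$ (Corollary \ref{covering2}), and the description $\Stabd(X)=\bigcup_{\Phi\in W(X)}\Phi_*(\bar U(X))$ from Proposition \ref{3.3}. The key structural fact is that $\Stabd(X)$ is a $\tGL^+(2,\bb R)$-bundle over $\Stab^{\mr n}(X)$; since $\tGL^+(2,\bb R)$ is contractible, the long exact homotopy sequence of this bundle gives $\pi_1(\Stabd(X))\cong\pi_1(\Stab^{\mr n}(X))$. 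Hence $(1)\Leftrightarrow\Stab^{\mr n}(X)$ is simply connected. Now $\Stab^{\mr n}(X)$ is a simply connected hyperbolic $2$-manifold, so by the uniformization theorem (or just: a simply connected surface carrying a complete hyperbolic metric, or a simply connected covering of an open subset of $\bb H$) it is isomorphic to $\bb H$. Conversely $\bb H$ is simply connected, so $(1)\Leftrightarrow(2)$. The only subtlety here is completeness of the hyperbolic metric, which one should address via Bridgeland's description: $\pi^{\mr n}$ is a covering onto $\mf H_0(X)$, and the latter's hyperbolic metric need not be complete, so I would instead argue directly that a simply connected covering space of the surface $\mf H_0(X)$ equipped with the pulled-back conformal hyperbolic structure is the universal cover, hence by Riemann uniformization (it is a simply connected Riemann surface admitting a nonconstant holomorphic map to $\bb H$, so it is not $\bb P^1$ or $\bb C$) it is biholomorphic to $\bb H$, and the pulled-back metric is the Poincaré metric.

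For $(2)\Rightarrow(3)$: assume $\Stab^{\mr n}(X)\cong\bb H$, so $\pi^{\mr n}\colon\bb H\to\mf H_0(X)$ is the universal cover and its deck group $G$ is isomorphic to $\pi_1(\mf H_0(X))=\bigast_A\bb Z\,\ell_A$ (the free product computed in Proposition \ref{2.9} and the remark after it). By Theorem \ref{covering} the deck transformations of $\pi$ are the autoequivalences in $\Ker(\kappa)$ preserving $\Stabd(X)$, acting on $\Stabd(X)$; passing to the normalized quotient and modding out the central $\bb Z[2]$, the deck group $G$ of $\pi^{\mr n}$ is identified with a quotient of this group. I would identify the deck transformation corresponding to the generator $\ell_A$ with (the class of) the spherical twist $T_A$ — or, since $\ell_A$ is a loop around the single spherical point $p(A)$ and the monodromy of the covering $\pi$ around a wall associated to a spherical sheaf $A$ is the spherical twist $T_A^2$ on $\Stabd(X)$ (this is the wall-crossing/monodromy computation underlying Proposition \ref{3.3}), conclude that $W(X)$ surjects onto $G\cong\bigast_A\bb Z$ with $T_A^2\mapsto\ell_A$. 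Combined with the fact that $W(X)$ is generated by the $T_A^2$, this surjection being an isomorphism forces $W(X)=\bigast_A(\bb Z\cdot T_A^2)$.

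For $(3)\Rightarrow(1)$: from Proposition \ref{3.3}, $\Stabd(X)$ is the union of $\Phi_*(\bar U(X))$ over $\Phi\in W(X)$, glued along the boundary pieces $\partial U(X)$; this is precisely the recipe that presents $\Stabd(X)$ as built from copies of the contractible set $\bar U(X)$ (contractible because $U(X)\cong V(X)\times\tGL^+(2,\bb R)$ with $V(X)\cong\mf H(X)\cong\bb H$) indexed by $W(X)$. The combinatorics of which translates $\Phi_*(\bar U(X))$ meet along which walls is governed by the relations in $W(X)$ — two chambers share a wall exactly when the corresponding group elements differ by a generator $T_A^2$. If $W(X)$ is free on the $T_A^2$, the resulting gluing pattern is a tree of chambers (the Cayley graph of a free group is a tree), and a space obtained by gluing contractible pieces along contractible intersections according to a tree is itself simply connected — formally, run van Kampen's theorem over the cover by the open stars $\{\Phi_*(U(X))\cup(\text{adjacent walls})\}$, whose nerve is the tree, giving $\pi_1(\Stabd(X))=1$; or descend to $\Stab^{\mr n}(X)$ and observe the tiling of $\mf H_0(X)$ by translates of $\mf H(X)$ along the discrete set $\mf S$ lifts to a tiling of the simply connected cover with deck group the free group $W(X)/\langle[2]\rangle$. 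The main obstacle is making the last step precise: one must verify that the gluing data (the pattern of boundary identifications among the $\Phi_*(\bar U(X))$, including the behaviour at $\partial U(X)$ analyzed in Section \ref{5}) really is encoded faithfully by the group $W(X)$ with its generating set, i.e. that adjacency of chambers corresponds exactly to multiplication by a single $T_A^2$ and that there are no "extra" identifications; this is exactly the content that makes freeness of $W(X)$ equivalent to the tiling being a tree rather than containing a cycle, and it is where Bridgeland's wall-and-chamber analysis must be invoked in full.
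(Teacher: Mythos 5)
Your proof of $(1)\Leftrightarrow(2)$ is the same as the paper's (the $\tGL^+(2,\bb R)$-bundle exact sequence plus uniformization; your care about ruling out $\bb C$ and $\bb P^1$ via the holomorphic map to $\mf H_0(X)\subset\bb H$ is exactly what "Riemann's mapping theorem" is doing there). Your $(2)\Rightarrow(3)$ is also essentially sound, and can even be closed cleanly without discussing injectivity: a homomorphism $W(X)\to\mathrm{Deck}(\pi^{\mr n})\cong\bigast_A\bb Z\ell_A$ sending $T_A^2\mapsto\ell_A$ admits the section $\ell_A\mapsto T_A^2$, which is then a surjective split injection, so $W(X)$ is free on the $T_A^2$. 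But note that the one nontrivial input there, namely that the monodromy of the loop $\ell_A$ is $T_{A*}^2$, is \emph{not} "the computation underlying Proposition \ref{3.3}": in the paper it is Proposition \ref{grouphom}, proved in Section \ref{5} using the general-position statement for $\partial U(X)$ (Lemma \ref{2.3}) and the classification of boundary types; it needs proof, not citation to the chamber decomposition.

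The genuine gap is in your $(3)\Rightarrow(1)$, and you name it yourself: the van Kampen/tree argument requires that the pieces $\Phi_*(\bar U(X))$, $\Phi\in W(X)$, are glued \emph{exactly} according to the Cayley graph of $(W(X),\{T_A^2\})$ — i.e.\ that distinct elements of $W(X)$ give distinct chambers (this is the injectivity of $W(X)\times\bb Z[2]\to\mathrm{Cov}(\pi)$, which is Bridgeland's Theorem 13.3, not a formal consequence of Proposition \ref{3.3}), that two translates meet only along a wall labelled by a single generator $T_A^2$ and nowhere else, and that $\bar U(X)$ and the wall overlaps are contractible (contractibility of the closure, including the boundary strata, is itself not addressed). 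Without these the nerve need not be a tree and van Kampen gives nothing; establishing them amounts to redoing the covering-space analysis. The paper avoids this entirely: from the Galois covering $\pi\colon\Stabd(X)\to\mca P^+_0(X)$ one has the exact sequence $1\to\pi_1(\Stabd(X))\to\pi_1(\mca P^+_0(X))\stackrel{\varphi}{\to}\mathrm{Cov}(\pi)\to 1$, with $\mathrm{Cov}(\pi)\cong W(X)\times\bb Z[2]$ (surjectivity from Proposition \ref{3.3}, injectivity from Bridgeland), $\pi_1(\mca P^+_0(X))\cong(\bigast_A\bb Z\ell_A)\times\bb Z g$ (Proposition \ref{2.9}), and $\varphi(\ell_A)=T_A^2$, $\varphi(g)=[2]$ (Proposition \ref{grouphom}); if $W(X)$ is free on the $T_A^2$, then $\varphi$ carries a free basis bijectively to a basis, hence is an isomorphism and $\pi_1(\Stabd(X))=1$. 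Replacing your gluing argument by this exact-sequence argument both fills the gap and gives $(1)\Leftrightarrow(3)$ in one stroke.
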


\begin{proof}
We first show that $\Stabd (X)$ is simply connected if and only if $\Stab ^{\mr{n}} (X)$ is simply connected. 
Since the right action of $\tGL^+(2, \bb R)$ on $\Stabd (X)$ is free, the natural map 
\[
\Stabd (X) \to \Stab ^{\mr{n}} (X)
\]
gives the $\tGL^+ (2, \bb R)$-bundle on $\Stab ^{\mr{n}} (X)$. 
Thus there is an exact sequence of fundamental groups:
\[
\begin{CD}
\pi _1(\tGL^+(2, \bb R)) @>>> \pi _1 (\Stabd (X)) @>>> \pi _1(\Stab ^{\mr{n}} (X)) @>>> 1. 
\end{CD}
\]
Since $\tGL^+(2, \bb R)$ is simply connected  
we see that $\pi _1 (\Stabd (X) ) =\{1\}$ if and only if $\pi _1 (\Stab ^{\mr{n}} (X))=\{1\}$.

Since $\Stab ^{\mr{n}} (X)$ is a hyperbolic and complex manifold, $\Stab ^{\mr{n}}(X)$ is isomorphic to $\bb H$ if and only if $\pi _1 (\Stab ^{\mr{n}} (X))=\{ 1\}$ by Riemann's mapping theorem. 
Thus we have proved that the first condition is equivalent to the second one. 

We secondly show the first condition is equivalent to the third one. 
Let $\mr{Cov}(\pi)$ be the covering transformation group of $\pi \colon \Stabd (X) \to \mca P^+_0(X)$. 
We put $\tilde W(X)$ by the group generated by $W(X)$ and the double shift $[2]$. 
Note that $\tilde W(X)$ is isomorphic to $W(X) \times \bb Z \cdot [2]$. 

We claim that $\tilde W(X)$ is isomorphic to $\mr{Cov}(\pi)$. 
Recall that all spherical sheaf $A$ on $X$ with $\rho (X)=1$ is $\mu$-stable by Remark \ref{spherical sheaf}. 
Hence any $\Phi \in \tilde W(X)$ gives a trivial action on $H^*(X, \bb Z)$ and preserves the connected component $\Stabd (X)$. 
Thus $\Phi $ gives the covering transformation by \cite[Theorem 13.3]{Bri}. 
Thus we have the group homomorphism $\tilde W(X) \to \mr{Cov}(X)$. 
In particular by Proposition \ref{3.3}, we see this morphism is a surjection. 
Furthermore as is shown in \cite[Theorem 13.3]{Bri}, this is injective. 
Thus we have proved our claim.

Since the covering $\pi \colon \Stabd (X) \to \mca P^+_0(X)$ is a Galois covering, we have the exact sequence of groups:
\[
\begin{CD}
1@>>> \pi _1(\Stabd (X)) @>>> \pi _1 (\mca P^+_0 (X)) @>\varphi>> \mr{Cov}(\pi) @>>>1. 
\end{CD}
\]
As will be shown in Proposition \ref{grouphom} we see $\varphi(\ell _{A}) = T_A^2$ and $\varphi (g)= [2]$. 
If $\Stabd(X)$ is simply connected then $\varphi $ is the isomorphism. 
Hence $W(X)$ is a free group generated by $T_A^2$. 
Conversely if $W(X)$ is a free group generated by $T_A^2$, then $\varphi $ is an isomorphism. 
Hence $\Stabd (X)$ is simply connected.  
\end{proof}

\begin{remark}
Since the quotient map $\Stabd (X) \to \Stab ^{\mr{n}}(X)$ is a $\tGL^+(2, \bb R)$-bundle, we see that $\Stabd(X)$ is simply connected if and only if $\Stabd(X)$ is a $\tGL^+(2, \bb R)$-bundle over $\bb H$. 
Thus we can deduce the global geometry of the stability manifold $\Stabd(X)$. 
\end{remark}

\begin{remark}\label{4.3}
We give some remarks for $W(X)$. 
Recall that any equivalence $\Phi \in \Aut (D(X))$ induces the Hodge isometry $\Phi ^{H}$ of $H^*(X, \bb Z)$ in a canonical way. 
If Bridgeland's conjecture holds, the group $W(X) \times \bb Z[2]$ is the kernel $\Ker (\kappa)$ of the natural map
\[
\kappa \colon\Aut (D(X)) \to O^+_{\text{Hodge}} (H^*(X, \bb Z))\colon\Phi \to \Phi ^{H}. 
\]
Moreover $\Ker (\kappa)$ is given by $\pi _1 (\mca P^+_0(X))$. 
The freeness of $W(X)$ means any two orthogonal complements $\< \delta _1 \>^{\perp}$ and $\< \delta _2 \>^{\perp}$ (where $\delta _1$ and $\delta _2  \in \Delta (X)$) do not meet each other in $\mca P^+_0(X)$. 

In more general situations (namely the case of $\rho (X) \geq 2$) there should be some orthogonal complements such that $\< \delta _1 \>^{\perp}$ and $ \<\delta _2\>^{\perp}$ meet each other. 
Hence we expect that 
the quotient group ${\Ker (\kappa) /\bb Z \cdot [2]}$ is not a free group. 
\end{remark}

\section{Wall and the hyperbolic structure}\label{5}

Let $X$ be a projective K3 surface with Picard rank one. 
We have two goals of this section. 
The first aim is to show Proposition \ref{grouphom} which is necessary for Theorem \ref{thm2}. 
The second aim is to show that any wall is geodesic.

Now we start this section from the following key lemma. 

\begin{lemma}\label{2.3}
Any $\sigma \in \partial U(X)$ is in a general position (See also \cite[\S 12]{Bri}).  
Namely the point $\sigma$ lies on only one irreducible component of $\partial U(X)$. 
\end{lemma}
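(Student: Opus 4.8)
The plan is to translate the statement into the explicit picture of $\bar U(X)$ built in Section~\ref{2} and then to reduce ``general position'' to an elementary fact about $(-2)$-classes on a Picard rank one K3 surface. Since $\partial U(X)$, and each of its irreducible components, is invariant under the right $\tGL^+(2,\bb R)$-action, it is enough to work in the quotient $\Stab^{\mr n}(X)$, where the map $\pi^{\mr n}$ of Corollary~\ref{covering2} identifies the image of $U(X)$ with $\mca V(X)\subset \mf H(X)\cong \bb H$. By the computation carried out in the proof of Lemma~\ref{key lemma} one has
\[
\mca V(X)=\mf H(X)\setminus \bigcup_{\delta\in\Delta^+(X)}S(\delta),\qquad S(\delta)=\big\{(\beta,\omega)\ \big|\ \beta=\tfrac{n}{r}L,\ 0<\omega^2\le \tfrac{2}{r^2}\big\}
\]
for $\delta=r\+nL\+s$: a vertical slit whose tip is the spherical point $p(\delta)$. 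Using the structure of the boundary described by Bridgeland (\cite[\S 12--13]{Bri}; see also Proposition~\ref{3.3}), the irreducible components of $\partial U(X)$ are exactly the pieces lying over the individual slits $S(\delta)$, $\delta\in\Delta^+(X)$ (a single $\delta$ may contribute a ``left'' and a ``right'' piece, one for each side of the slit inside $\mca V(X)$).

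It therefore suffices to check that, for $\delta_1\neq\delta_2$ in $\Delta^+(X)$, the slits $S(\delta_1)$ and $S(\delta_2)$ are disjoint. Write $\delta_i=r_i\+n_iL\+s_i$ with $r_i>0$. The equation $\delta_i^2=-2$ reads $r_is_i=dn_i^2+1$, which immediately forces $\gcd(n_i,r_i)=1$; hence the $\beta$-coordinate $\tfrac{n_i}{r_i}$ of the vertical line carrying $S(\delta_i)$ determines the pair $(n_i,r_i)$, and then $s_i=(dn_i^2+1)/r_i$ is determined as well. Thus $S(\delta_1)\cap S(\delta_2)\neq\emptyset$ would put the two slits on the same vertical line and give $(n_1,r_1,s_1)=(n_2,r_2,s_2)$, i.e.\ $\delta_1=\delta_2$. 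So distinct slits are disjoint; and the two pieces coming from a single $\delta$ meet only at the tip $p(\delta)$, which is \emph{not} in $\bar U(X)$ since $\<\exp(p(\delta)),\delta\>=0$ places it in $\<\delta\>^\perp$. Consequently every point of $\partial U(X)$ lies over a single slit, hence on exactly one irreducible component, which is the assertion.

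The one step that genuinely needs care is the identification in the first paragraph: one must invoke the global description $\Stabd(X)=\bigcup_{\Phi\in W(X)}\Phi_*(\bar U(X))$ together with the fact that passing from $\bar U(X)$ to an adjacent translate across the slit $S(\delta)$ is governed by the single spherical locally free sheaf $A$ with $v(A)=\delta$ (Remark~\ref{spherical sheaf}), so that no irreducible component of $\partial U(X)$ can spread over several slits or over only a proper connected piece of one. Once this is granted, the remaining content is just the arithmetic of the second paragraph and the observation that the tips $p(\delta)$ are excluded from $\bar U(X)$, and the lemma follows.
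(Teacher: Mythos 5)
Your arithmetic core is exactly the paper's: normalizing so that $Z(\mca O_x)=-1$, a $(-2)$-class $\delta=r\+nL\+s$ pins down the vertical line $\beta=\tfrac{n}{r}L$, and $\delta^2=-2$ forces $\gcd(r,n)=1$ and $s=(dn^2+1)/r$, so distinct classes in $\Delta^+(X)$ give distinct lines. But the way you package it has a genuine gap of circularity. You start from the assertion that the irreducible components of $\partial U(X)$ are ``exactly the pieces lying over the individual slits $S(\delta)$'' and admit this is the step that needs care. In the paper this explicit description (Lemma \ref{proposition}, and the decomposition $\partial U(X)=\coprod_A(W_A^+\sqcup W_A^-)$) is established \emph{after} and \emph{using} Lemma \ref{2.3} together with \cite[Theorem 12.1]{Bri}, because Bridgeland's Theorem 12.1 only describes boundary points in general position. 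The paper avoids the circle by arguing locally at a hypothetical non-general point $\sigma\in W_1\cap W_2$: by \cite[Proposition 9.3, Theorem 12.1]{Bri} the nearby general points of $W_i$ satisfy $\mf{Im}\,Z(\mca O_x)\overline{Z(\delta_i)}=0$ for some $\delta_i\in\Delta^+(X)$, this being a closed condition it also holds at $\sigma$, and then the normalization and the $\gcd$ computation force $\delta_1=\delta_2$. If you want your global picture instead, you must first prove (not assert) that every irreducible component carries a single numerical condition of this form and that its image is contained in one slit; that is essentially the same continuity/closedness step, so nothing is gained by postponing it.

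There is also a concrete error in how you handle the two components attached to a single $\delta$. They are not ``left'' and ``right'' pieces of the slit meeting only at the tip: both walls of type $(A^+)$ and $(A^-)$ project onto the \emph{entire} segment $S(v(A))$ (this is what Lemma \ref{proposition} says), so their images in $\mf{H}(X)$ coincide, and neither the disjointness of distinct slits nor the exclusion of the tip $p(\delta)$ (which indeed lies in $\<\delta\>^{\perp}$ and hence outside $\bar U(X)$) says anything about whether a single $\sigma$ could lie on both. Ruling that out requires an argument in $\Stab(X)$ itself, e.g.\ uniqueness of Jordan--H\"older factors together with the fact that $A$ and $A[2]$ cannot both be $\sigma$-semistable of phase $1$, since their phases differ by $2$; the ``left/right limits'' picture lives in the covering space, not in the base, and cannot be certified by base-level set theory. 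So as written the proposal leaves both the classification of components and the same-$\delta$ case unproved, while the part it does prove (disjointness of slits) coincides with the paper's key computation.
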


Before we start the proof, we remark that Maciocia proved a similar assertion in a slightly different situation in \cite{Mac12}. 

\begin{proof}
Suppose that there is an element $\sigma =(\mca A, Z) \in \partial U(X)$ which is not general. 
Let $W_1$ and $W_2$ be two irreducible components of $\partial U(X)$ such that $\sigma \in W_1 \cap W_2$. 
By \cite[Proposition 9.3]{Bri} we may assume $\forall \tau_1 \in  W_1\setminus \{ \sigma \}$ and $\forall \tau_2 \in W_2 \setminus \{ \sigma \}$ are in general positions in a sufficiently small neighborhood of $\sigma$. 
Hence by \cite[Theorem 12.1]{Bri} there are two $(-2)$-vectors $\delta _i \in \Delta ^+(X)$ ($i=1,2$) such that 
for any $\tau_i =(\mca A_i, Z_i) \in W_i\setminus\{ \sigma \}$ the imaginary part $\mf{Im} Z_i(\mca O_x)\overline{Z_i(\delta_i)}$ is $0$ where $i \in \{1, 2\}$ and $x\in X$. 
Since these are closed conditions, the central charge $Z$ of $\sigma$ also satisfies the following condition:
\begin{equation}
\mf{Im}Z(\mca O_x) \overline{Z(\delta_1)} = \mf{Im} Z(\mca O_x) \overline{Z(\delta _2)} =0. \label{bcondition}
\end{equation}

By the assumption $\mr{NS}(X) = \bb Z L$, 
there exists $ g \in {GL}^+(2, \bb R)$ such that 
$Z'(E) := g^{-1} \circ Z (E) = \< \exp(\beta + \sqrt{-1}\omega), v(E) \>$
where $(\beta, \omega) \in \mf{H}(X)$. 

Now we put $\delta _i = r_i \+ n_i L \+ s_i$. 
Note that $r_i \neq 0$ since $n_i^2 L_i^2 \geq 0$. 
Since $Z'(\mca O_x) =-1$ 
we see $\mf{Im}Z'(\delta _i)$ is zero by the condition (\ref{bcondition}). 
Thus we see  
\[
\frac{n_1L}{r_1} = \frac{n_2 L }{r_2} = \beta. 
\]
Since $\delta _i^2 =-2$ we see $\gcd (r_i, n_i)=1$. 
Hence we have $\delta _1 = \delta _2$. 
This contradicts $W_1 \neq W_2$. 
\end{proof}

By Lemma \ref{2.3} and \cite[Theorem 12.1]{Bri} we see $\partial U(X)$ is a disjoint union of real codimension $1$ submanifolds:
\[
\partial U(X) = \coprod_{A:\text{spherical locally free}} (W_A^+ \sqcup W_A^-), 
\]
where $W_A^+$ (respectively $W_A^-$) is the set of stability conditions whose type is $(A^+)$ (respectively $(A^-)$). 
In the following we give an explicit description of each component $W_{A}^{\pm}$.

\begin{lemma}\label{proposition}
Let $X$ be a projective K3 surface with $\mr{NS}(X) = \bb ZL$ and let $A$ be a spherical locally free sheaf. 
We put $v(A) = r_A\+ n_A L \+ s_A$ and define the set $S(v(A))$ by
\[
S(v(A)) = \{ (\beta,\omega)\in \mf{H}(X) | \beta =\frac{n_A L}{r_A}, 0< \omega ^2 < \frac{2}{r_A^2}  \}. 
\]
Then $W_A^{\pm}$ is isomorphic to $S(v(A)) \times \tGL^+(2, \bb R)$. 
In particular $W_A^{\pm}/\tGL^+(2, \bb R)$ is a hyperbolic segment spanned by two  points in $\Stab^{\mr{n}}(X)$ which is isomorphic to $S(v(A))$.  
\end{lemma}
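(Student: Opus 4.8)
The plan is to pin down the set $\partial U(X)$ concretely using the analysis already in place. By Lemma~\ref{2.3} every boundary point is general, so by \cite[Theorem 12.1]{Bri} the component $W_A^{\pm}$ is cut out, inside a $\tGL^+(2,\bb R)$-orbit of such points, by the vanishing of the relevant imaginary part associated to the spherical vector $v(A)$; concretely, as in the proof of Lemma~\ref{2.3}, after normalizing a representative $\sigma=(\mca A_{(\beta,\omega)},Z_{(\beta,\omega)})\in V(X)$ so that $Z(\mca O_x)=-1$, the defining condition $\mf{Im}\,Z(\mca O_x)\overline{Z(\delta)}=0$ for $\delta=v(A)=r_A\+ n_AL\+ s_A$ forces $\beta=\frac{n_AL}{r_A}$. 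So the first step is to observe that $W_A^{\pm}$ is $\tGL^+(2,\bb R)$-invariant (the defining conditions are $\tGL^+(2,\bb R)$-invariant, being closures of orbits), hence it suffices to identify $W_A^{\pm}\cap V(X)$, or rather its image in $\Stab^{\mr n}(X)$, with a subset of the line $\{\beta=\frac{n_AL}{r_A}\}$ in $\mf H(X)$.

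Next I would determine exactly which part of that vertical line occurs. The ray $\beta=\frac{n_AL}{r_A}$ is exactly the line on which $Z_{(\beta,\omega)}(A)$ (or a shift of $A$) can become real, i.e. where $\sigma_{(\beta,\omega)}$ fails the condition~(\ref{BG-condition}); recalling from the proof of Lemma~\ref{key lemma} that the locus where~(\ref{BG-condition}) fails is $\bigcup_{\delta\in\Delta^+}S(\delta)$ with $S(v(A))=\{\beta=\frac{n_AL}{r_A},\ 0<\omega^2\le \frac{2}{r_A^2}\}$, the relevant $\omega$ range is $0<\omega^2<\frac{2}{r_A^2}$ (the endpoint $\omega^2=\frac{2}{r_A^2}$ being the spherical point $p(A)$ itself, where $Z(A)=0$, which is not in $\partial U(X)$). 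The point is: along $\beta=\frac{n_AL}{r_A}$ with $\omega^2<\frac{2}{r_A^2}$, $\mca O_x$ is still semistable (the destabilizing is only by subobjects built from $A$, which become strictly semistable not unstable) but no longer stable — this is precisely the characterization of $\bar U(X)\setminus U(X)$ in \S\ref{SK3}. Combining, $W_A^{\pm}\cap V(X)$ maps isomorphically to $S(v(A))$ under $\pi^{\mr n}$, and since $U(X)=V(X)\cdot\tGL^+(2,\bb R)\cong V(X)\times\tGL^+(2,\bb R)$ by \cite[Proposition 10.3]{Bri} and $W_A^{\pm}$ is $\tGL^+(2,\bb R)$-invariant, we get $W_A^{\pm}\cong S(v(A))\times\tGL^+(2,\bb R)$; the two components $W_A^{+}$ and $W_A^{-}$ come from the two orderings of the phases of $\mca O_x$ relative to $A$ on the two sides, but both have the same underlying set $S(v(A))$ in $\mf H(X)$.

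Finally, for the last sentence, $W_A^{\pm}/\tGL^+(2,\bb R)\cong S(v(A))$ sits inside $\Stab^{\mr n}(X)$, which by Theorem~\ref{thm1} carries the hyperbolic metric pulled back from $\mf H_0(X)\subset\bb H$ via the covering $\pi^{\mr n}$. In the coordinates of $\mf H(X)$ the set $S(v(A))$ is a vertical segment $\{x=\tfrac{n_A}{r_A}\}$ (a piece of a geodesic of $\bb H$) joining $(\tfrac{n_A}{r_A},0)$ — a point on the real boundary — to the spherical point $p(A)=(\tfrac{n_A}{r_A},\tfrac{1}{\sqrt d\,|r_A|}L)$; locally on $\mf H_0(X)$ this is an honest geodesic segment, and $\pi^{\mr n}$ is a local isometry, so its image in $\Stab^{\mr n}(X)$ is a hyperbolic segment spanned by two points. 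I expect the main obstacle to be the careful bookkeeping identifying the ``type $(A^{\pm})$'' description of \cite[Theorem 12.1]{Bri} with the explicit inequality $0<\omega^2<\frac{2}{r_A^2}$ on the line $\beta=\frac{n_AL}{r_A}$ — i.e., checking that the full open segment (and nothing more or less) is realized as genuine boundary stability conditions where $\mca O_x$ is strictly semistable — rather than the metric statement, which is then immediate.
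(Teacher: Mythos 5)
Your proposal correctly locates the numerical image of $W_A^{\pm}$ (the vertical segment $\beta=\tfrac{n_AL}{r_A}$, $0<\omega^2<\tfrac{2}{r_A^2}$), and the final metric remark is fine, but the middle of the argument has a genuine gap. You identify the boundary points with stability conditions of the form $\sigma_{(\beta,\omega)}$ for $(\beta,\omega)\in S(v(A))$ (e.g.\ you speak of ``$W_A^{\pm}\cap V(X)$'', which is actually empty since $W_A^{\pm}\subset\partial U(X)$ and $V(X)\subset U(X)$). This identification is not available: for $(\beta,\omega)\in S(v(A))$ the pair $(\mca A_{(\beta,\omega)},Z_{(\beta,\omega)})$ is not a stability condition at all, because $A\in\mca F_{(\beta,\omega)}$ so $A[1]\in\mca A_{(\beta,\omega)}$, while $Z_{(\beta,\omega)}(A[1])=\tfrac{1}{r_A}-\tfrac{r_A}{2}\omega^2\in\bb R_{>0}$ precisely when $\omega^2<\tfrac{2}{r_A^2}$, violating the stability-function axiom (this failure is exactly why (\ref{BG-condition}) excludes these points). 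The points of $W_A^{\pm}$ have the central charges you describe but \emph{twisted} hearts, so ``$\mca O_x$ is semistable but not stable for $\sigma_{(\beta,\omega)}$'' is not a meaningful characterization, and the vanishing condition from \cite[Theorem 12.1]{Bri} only pins down the image of the central charge, not which stability conditions over each such charge actually occur, nor that all of $S(v(A))\times\tGL^+(2,\bb R)$ is realized with the two types $(A^{\pm})$.

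That realization step --- which you yourself flag as ``the main obstacle'' but do not carry out --- is the actual content of the lemma, and the paper proves it by a different mechanism: for $\sigma\in W_A^{+}$ the Jordan--H\"older filtration of $\mca O_x$ is the spherical triangle $A^{\oplus r_A}\to\mca O_x\to T_A(\mca O_x)$, so $T_{A*}^{-1}\sigma$ lies in $U(X)$, in fact in the region $W'=\{\sigma_{(\beta,\omega)}\in V(X)\mid \beta=\tfrac{n_AL}{r_A},\ \omega^2>\tfrac{2}{r_A^2}\}\cdot\tGL^+(2,\bb R)$ \emph{above} the spherical point; conversely, for $\tau'\in W'$ the $\mu$-stability of $A$ together with \cite[Theorem 0.2]{Huy08} shows $A[1]$ is $\tau'$-stable of phase $1$, hence $T_A^{-1}(\mca O_x)$ is strictly $\tau'$-semistable, giving $W_A^{+}=T_{A*}W'$. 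The identification with $S(v(A))\times\tGL^+(2,\bb R)$ then follows because, by Lemma \ref{3.2}, $T_A$ induces a fractional linear transformation on $\mf H(X)$ carrying the vertical ray above $p(A)$ onto the segment $S(v(A))$. To repair your argument you would need to supply this (or an equivalent) construction of the boundary points and prove both inclusions, rather than reading them off from the locus where (\ref{BG-condition}) fails.
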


\begin{proof}
 We have to consider two cases: $\sigma\in W_A^+$ or $\sigma \in W_A^-$. 
Since the proof is similar, we give the proof only for the case $\sigma \in W_A^+$.

Since $\sigma \in W_A^+$, the Jordan-H\"{o}lder filtration of $\mca O_x$ is given by the spherical triangle (\ref{spherical triangle})
\begin{equation}
\begin{CD}
A^{\oplus r_A}@ >>> \mca O_x @>>> T_A (\mca O_x).  
\end{CD}\label{ichi} 
\end{equation}
By taking $T_A^{-1}$ to the triangle (\ref{ichi}) we have 
\begin{equation}
\begin{CD}
A^{\oplus r_A}[1] @>>> T_A^{-1}(\mca O_x) @>>> \mca O_x.  
\end{CD}\label{ni}
\end{equation}
Thus $\mca O_x$ is $T_{A*}^{-1} \sigma$-stable. 
Hence $T_{A*}^{-1}\sigma$ is in $U(X)$. 

Now we put $T_{A*}^{-1} \sigma = \tau = (\mca A, Z)$. 
Since $Z(A[1])/ Z(\mca O_x) \in \bb R_{>0}$, we see that $\tau$ is in the set 
\[
W' = \{\sigma _{(\beta, \omega)} \in V(X)  |  \beta = \frac{n_A L}{r_A}, \frac{2}{r_A^2} < \omega^2 \} \cdot \tGL^+(2, \bb R). 
\]
Thus we see $W_A^+ \subset T_{A*}W'$. 
To show the inverse inclusion, let $\tau ' = (\mca A', Z') $ be in $W'$. 
As we remarked in Remark \ref{spherical sheaf}, $A$ is $\mu$-stable locally free sheaf. 
Then $A[1]$ has no nontrivial subobject in $\mca A'$ by \cite[Theorem 0.2]{Huy08}. 
Hence $A[1]$ is $\tau'$-stable, in particular, with phase $1$. 
Since $T_A^{-1} (\mca O_x)$ is given by the extension (\ref{ni}) of $\mca O_x$ and $A^{\+ r_A}[1]$, 
the object $T_A^{-1}(\mca O_x)$ is strictly $\tau'$-semistable. 
Thus by taking $T_A$ to the triangle (\ref{ni}), we obtain the Jordan-H\"{o}lder filtration (\ref{ichi}). 
Hence we see $W_A^+ = T_{A*}W'$.

Since the induced morphism between $\mf{H}(X)$ by $T_A$ is given by Lemma \ref{3.2}, we see 
\[
W_A^+ = T_{A*} W' \cong S(v(A)) \times \tGL^+(2, \bb R). 
\]
\end{proof}

For a spherical locally free sheaf $A$ we define the point $q = p(T_A(\mca O_x)) \in \bar {\mf{H}}(X)$ by $(\beta, \omega)= (\frac{c_1(A)}{r_A}, 0)$.  
By the simple calculation we see that 
\[
\< \exp (q) , v(T_A(\mca O_x)) \>=0. 
\]
Thus in the sense of Definition \ref{ass.pt}, $p(T_A(\mca O_x))$ could be regarded as the associated point of the isotropic vector $v(T_A(\mca O_x))$.  
In view of this we define the following notion:

\begin{definition}\label{semirigid.pt}
An associated point $p \in \bar{\mf{H}}(X)$ with a primitive isotropic vector $v \in \mca N(X)$ is the point which satisfies
\[
\< \exp(p), v  \>=0. 
\]
Clearly if $v = r\+ nL \+s$ then $p$ is given by $\frac{n}{r}$. 
In particuclar if $v = 0\+ 0\+ 1$ the associated point is $\infty \in \bar {\mf{H}}(X)$. 
We denote the point by $p(v)$. 
\end{definition}

As an application of Lemma \ref{proposition} we give the proof of a remained proposition:

\begin{proposition}\label{grouphom}
Let $\varphi \colon \pi _1(\mca P^+_0(X)) \to \mr{Cov}(\pi)$ be the morphism in the proof of Theorem \ref{thm2}. 
Then $\varphi(\ell _{A}) = T_A^2$ and $\varphi (g)=[2]$. 
\end{proposition}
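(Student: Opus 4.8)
The plan is to compute the action of the two generators $\ell_A$ and $g$ of $\pi_1(\mca P^+_0(X))$ on the universal-type cover $\Stabd(X)$ by lifting the corresponding loops in $\mca P^+_0(X)$ explicitly, and to identify the resulting deck transformations as $T_A^2$ and $[2]$ respectively. Recall $\mca P^+_0(X) \cong \mf{H}_0(X) \times \mr{GL}^+(2,\bb R)$ and $\pi_1$ is the product $\bigl(\bigast_{A}\bb Z\cdot\ell_A\bigr)\times\bb Z\cdot g$ by Proposition \ref{2.9}; so it suffices to treat the two factors separately.

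For the generator $g$: the loop $g$ lives in the $\mr{GL}^+(2,\bb R)$-factor, i.e. it is the full rotation loop in the fibre direction. Since $\Stabd(X)$ is a $\tGL^+(2,\bb R)$-bundle over $\Stab^{\mr n}(X)$ and $\tGL^+(2,\bb R)$ is the universal cover of $\mr{GL}^+(2,\bb R)$ with deck group $\pi_1(\mr{GL}^+(2,\bb R)) = \bb Z\cdot g$, lifting $g$ amounts to tracking the fibrewise monodromy. The generator of $\pi_1(\mr{GL}^+(2,\bb R))$ corresponds under the $\tGL^+(2,\bb R)$-action to the element that rotates phases by $2\pi$, and on $\Stab(X)$ this is precisely the double shift $[2]$ (it shifts every phase by $2$, equivalently $t\mapsto t+2$ in the $\tGL$-parametrization). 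Hence $\varphi(g) = [2]$; this direction is essentially a bookkeeping argument and should be short.

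For the generator $\ell_A$: this is where Lemma \ref{proposition} and the description of $\partial U(X)$ do the work. The loop $\ell_A$ encircles the single spherical point $p(A)\in\mf{H}_0(X)$. I would lift a representative of $\ell_A$ starting at a base stability condition in $U(X)$ with $\omega^2\gg 0$. As the loop approaches $p(A)$ from one side it meets the wall component $W_A^+$ (or $W_A^-$); by the proof of Lemma \ref{proposition}, $W_A^+ = T_{A*}W'$, so crossing that wall and coming back corresponds to applying the wall-crossing transformation associated to the spherical twist $T_A$. Going all the way around $p(A)$ crosses the two boundary components $W_A^+$ and $W_A^-$ — equivalently one crosses a wall of type $A$ twice — and the composite deck transformation is $T_A\circ T_A = T_A^2$. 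Concretely, one tracks how the heart and central charge are analytically continued around $p(A)$: the monodromy is the composition of the elementary modifications dictated by the two spherical triangles (\ref{ichi}) and (\ref{ni}), which compose to $T_A^2$; the orientation of $\ell_A$ fixed in Definition \ref{generator} (counterclockwise) pins down that we get $T_A^2$ rather than $T_A^{-2}$. I would also invoke Theorem \ref{covering} and Proposition \ref{3.3}, which already tell us $\mr{Cov}(\pi)$ is generated by the $T_A^2$ (and $[2]$), so the only thing to check is that the specific loop $\ell_A$ maps to the specific generator $T_A^2$ of the corresponding free factor, not to some other word.

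The main obstacle will be making the ``half-loop gives one spherical twist'' claim precise: one must show that analytically continuing a stability condition in $U(X)$ along a small arc that winds halfway around $p(A)$, passing through exactly one of $W_A^{\pm}$, lands in $T_{A*}U(X)$ (up to the $\tGL^+$-action), and that this is compatible across the two boundary strata so that the full loop yields $T_A^2$. This is exactly the local wall-crossing analysis of \cite[\S 12, Theorem 13.3]{Bri} specialized to the $\rho=1$ situation, where Lemma \ref{2.3} guarantees every boundary point lies on a unique component $W_A^{\pm}$, so no confluence of walls complicates the monodromy. Once that local picture is in hand, the global statement follows because $\pi_1(\mf{H}_0(X))$ is freely generated by the $\ell_A$ (Proposition \ref{2.9}) and the correspondence $\ell_A\mapsto T_A^2$ is forced on each free factor.
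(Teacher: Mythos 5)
Your identification of $\varphi(g)=[2]$ is fine and matches the paper. But the mechanism you propose for $\ell_A$ contains a genuine error. You claim the loop around $p(A)$ crosses the two boundary components $W_A^+$ and $W_A^-$, each crossing contributing a single twist $T_A$, and that the composite is $T_A\circ T_A=T_A^2$. This is not what happens: by Lemma \ref{proposition} both $W_A^+$ and $W_A^-$ project under $\pi_{\mf H}$ onto the \emph{same} vertical slit $S(v(A))$ hanging below the spherical point $p(A)$, and a small loop encircling the endpoint $p(A)$ meets this slit in exactly one point (this is stated explicitly in the paper: $\ell_A([0,1])\cap\pi(\partial U(X))$ is a single point, using the discreteness from Lemma \ref{key lemma} and general position from Lemma \ref{2.3}). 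Moreover a single wall-crossing cannot yield $T_A$: the deck group of $\pi$ consists of equivalences acting trivially on $\mca N(X)$, while $T_A$ acts by the reflection in $v(A)$, which is nontrivial; this is precisely why $W(X)$ is generated by the squares $T_A^2$. The correct local picture, from Bridgeland's analysis, is that the chamber adjacent to $U(X)$ across a type $(A^+)$ boundary point is $T_{A*}^{2}U(X)$, so the \emph{single} crossing already produces $T_A^2$.

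This exposes the real gap: the nontrivial step is to decide whether the unique crossing point is of type $(A^+)$ or $(A^-)$, i.e.\ whether the lift lands in $T_{A*}^{2}U(X)$ or $T_{A*}^{-2}U(X)$. You dismiss this with ``the counterclockwise orientation pins it down,'' but that is exactly what needs proof. The paper settles it by contradiction: if the crossing were of type $(A^-)$, then for stability conditions $\sigma_\epsilon$ on the loop just before the wall one could arrange $A$ and $T_A^{-1}(\mca O_x)$ to be $\sigma_\epsilon$-stable, and the inequality $\mf{Im}\,Z_\epsilon(\mca O_x)/Z_\epsilon(A[2])>0$ would force the triangle $T_A^{-1}(\mca O_x)\to\mca O_x\to A^{\+ r_A}[2]$ to be the Harder--Narasimhan filtration of $\mca O_x$, contradicting that $\mca O_x$ is $\sigma_\epsilon$-stable on $U(X)$. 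Without an argument of this kind your proposal determines $\varphi(\ell_A)$ only up to the ambiguity $T_A^{\pm2}$, and as written its two-crossing heuristic is incompatible with the actual geometry of $\partial U(X)$.
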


\begin{proof}

We set a base point of $\pi _1 (\mf{H}_0(X))$ as $\sqrt{-1}\omega _0$ with $\omega _0^2 \gg 2$. 
We also define a base point of $\pi _1(\mca P^+_0(X))$ by $\exp(\sqrt{-1}\omega _0)$. 
Let $\sigma _0 = \sigma _{(0, \omega _0)} \in V(X)$ be a base point of the covering map $\pi \colon \Stabd (X) \to \mca P^+_0(X)$. 

Let $\ell _A\colon  [0,1] \to \mf{H}_0(X)$ be the loop defined in Definition \ref{generator} which turns round the point $p(v(A))$ 
and let $\tilde \ell _A$ be the lift of $\ell _A$ to $\Stabd (X)$. 

The second assertion is almost obvious. 
In Definitions \ref{generator} we choosed $g$ as 
\[
g\colon [0,1] \to GL^+(2, \bb R)\colon  t\mapsto \begin{pmatrix} \cos (2\pi t) & -\sin (2\pi t) \\ \sin(2\pi t)& \cos (2 \pi t) \end{pmatrix}. 
\]
Then the induced action of $g$ on $\Stabd (X)$ is given by the double shift $[2]$. 
Hence it is enough to show that $\tilde \ell _A (1) = T_{A*}^2 \sigma _0$.

Since there are no spherical point $p(\delta)$ inside the loop $\ell _A$ except for $p(v(A))$ itself, 
the intersection $\ell_A ([0,1]) \cap \pi (\partial U(X))$ consists of only one point. 
We may assume the point is given by $\ell _A(1/2)$. 
Since we have $\tilde \ell _A([0, 1/2)) \subset  U(X)$ 
we see that $\tilde \ell _A (1/2) = \tau$ is in $\partial U(X)$ and that $\tau$ is of type $(A^+)$ or $(A^-)$ by Lemma \ref{2.3} and \cite[Theorem 12.1]{Bri}. 

We finally claim that $\tau$ is of type $(A^+)$. 
To prove the claim we put 
\[
\tilde \ell _A \Big(\frac{1}{2}-\epsilon \Big) =\sigma _{\epsilon}= (\mca A_{\epsilon}, Z_{\epsilon}) \in \Stabd (X) , 
\]
for $0< \epsilon \ll 1$. 
In fact suppose to the contrary that $\tau$ is of type $(A^-)$. 
By Proposition \cite[Proposition 9.4]{Bri} we may assume both $A$ and $T_A^{-1}(\mca O_x)$ are $\sigma_{\epsilon}$-stable for any $\epsilon$. 
Since we see $\mf{Im}Z_{\epsilon}(\mca O_x)/ Z_{\epsilon}(A[2]) >0$, 
the distinguished triangle 
\[
\begin{CD}
T_A^{-1} (\mca O_x) @>>> \mca O_x @>>> A^{\+ r_A}[2]
\end{CD}
\]
gives the Harder-Narasimhan filtration of $\mca O_x$ in $\sigma_{\epsilon}$. 
This contradicts the fact that $\mca O_x$ is $\sigma_{\epsilon}$-stable. 
Hence $\ell_A (1/2)$ is of type $(A^+)$ and $\tilde \ell _A(1/2 + \epsilon) $ is in $T_{A*}^2  U(X)$. 
For $t> 1/2$, since $\ell _A$ does not meet $\pi (\partial U(X))$, we see $\tilde \ell _A(1) = T_{A*}^2 \sigma_0$. 
\end{proof}

Finally we observe so called walls in terms of the hyperbolic structure. 
As we showed in Lemma \ref{proposition} each boundary components of $\partial V(X)$ is geodesic in $\Stab^{\mr{n}}(X)$. 
More generally we show that any wall is geodesic in $\Stab^{\mr{n}}(X)$.

Let $\mca S$ be the set objects which have bounded mass in $\Stabd(X)$, 
and $B$ a compact subset of $\Stabd(X)$. 
Then by \cite[Proposition 9.3]{Bri} we have a finite set $\{ W_{\gamma}  \}_{\gamma \in \Gamma}$ of real codimension $1$ submanifolds satisfying the property in the proposition. 
For the set $\{ W_{\gamma} \}_{\gamma \in \Gamma}$ we put 
\[
\mf{W}(\mca S,B) = \Big( \bigcup_{\gamma \in \Gamma} W_{\gamma} \Big)/\tGL^+(2, \bb R). 
\]
Note that $\mf{W}(\mca S,B)$ is a subset of $\Stab^{\mr{n}}(X)$.

\begin{theorem}\label{thm3}
The set $\mf{W}(\mca S, B)$ is geodesic in $\Stab^{\mr{n}}(X)$. 
\end{theorem}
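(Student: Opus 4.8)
The plan is to reduce the statement to an elementary computation in the hyperbolic plane. First I would invoke Corollary \ref{covering2} together with the construction of the metric in the proof of Theorem \ref{thm1}: the normalized covering $\pi^{\mr{n}}\colon \Stab^{\mr{n}}(X)\to\mf{H}_0(X)$ is a local isometry onto the open subset $\mf{H}_0(X)$ of $\mf{H}(X)\cong\bb H$ (coordinate $z=x+\sqrt{-1}y$, corresponding to $(\beta,\omega)=(xL,yL)$) equipped with the hyperbolic metric $(dx^2+dy^2)/y^2$. Since being geodesic is a local property that a local isometry both preserves and reflects, it suffices to show that the image in $\bb H$ of each wall is locally a geodesic. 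By \cite[Propositions 9.3, 9.4]{Bri} and \cite[Theorem 12.1]{Bri}, near any of its points a wall $W_\gamma$ (hence $W_\gamma/\tGL^+(2,\bb R)$) is the phase-alignment locus of a fixed pair of classes $v_1,v_2\in\mca N(X)$ attached to an object of $\mca S$ that becomes strictly semistable there; after pushing to $\bb H$ it is
\[
\{\, z\in\bb H : \mf{Im}\big(Z_z(v_1)\overline{Z_z(v_2)}\big)=0 \,\},\qquad Z_z(v):=\langle\exp(zL),v\rangle.
\]
So the theorem reduces to: every such locus is a geodesic of $\bb H$.

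The second step records the shape of $Z_z$. Since $\exp(zL)=1\+zL\+dz^2$ (with $L^2=2d$), for $v=r\+nL\+s\in\mca N(X)$ one gets $Z_z(v)=-dr\,z^2+2dn\,z-s$ — this is just equation (\ref{key}) of Remark \ref{Kaw4.0} rewritten — a polynomial of degree at most $2$ in $z$ with \emph{real} coefficients, so the rank-zero case ($Z_z(v)$ affine) is covered uniformly. Writing $Z_z(v_i)=p_i(z)$ with $p_i(t)=\alpha_i t^2+\beta_i t+\gamma_i\in\bb R[t]$, we have $\overline{p_i(z)}=p_i(\bar z)$, so the defining condition becomes $p_1(z)p_2(\bar z)-p_1(\bar z)p_2(z)=0$. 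The third and main step is the polynomial identity: the left-hand side is antisymmetric under $z\leftrightarrow\bar z$, hence divisible by $z-\bar z$, and a direct expansion gives
\[
p_1(z)p_2(\bar z)-p_1(\bar z)p_2(z)=(z-\bar z)\big(A\,z\bar z+B(z+\bar z)+C\big),
\]
where $A,B,C$ are the three $2\times2$ minors of the array with rows $(\alpha_1,\beta_1,\gamma_1)$ and $(\alpha_2,\beta_2,\gamma_2)$. Since $z-\bar z=2\sqrt{-1}y\neq0$ on $\bb H$, the wall locus is exactly $\{\, z=x+\sqrt{-1}y\in\bb H : A|z|^2+2Bx+C=0 \,\}$, the vanishing of a real linear combination of $|z|^2$, $\mf{Re}(z)$ and $1$. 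Unless all of $A,B,C$ vanish — which would force $p_1,p_2$ proportional, so that $W_\gamma$ would not be a genuine codimension-$1$ wall — this locus is either a vertical line or a Euclidean circle centred on the real axis, i.e.\ precisely a geodesic of the upper half plane. Intersecting with $\mf{H}_0(X)$ and pulling back along $\pi^{\mr{n}}$, each wall, and hence the finite union $\mf{W}(\mca S,B)$, is geodesic in $\Stab^{\mr{n}}(X)$.

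I expect the only genuine obstacle to be the first reduction: extracting cleanly from Bridgeland's wall-and-chamber machinery that every wall component is, near each of its points, a \emph{single} phase-alignment locus $\mf{Im}(Z(v_1)\overline{Z(v_2)})=0$ for one fixed pair of classes, rather than a finite assembly of several such loci. Once that is settled, the remainder is the completely explicit identity displayed above together with the standard description of geodesics in $\bb H$, which is routine.
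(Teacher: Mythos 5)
Your proposal is correct, and its overall strategy coincides with the paper's: reduce via the covering $\pi^{\mr{n}}\colon \Stab^{\mr{n}}(X)\to\mf{H}_0(X)$ to showing that the phase-alignment locus $\mf{Im}\big(Z_{(\beta,\omega)}(v_i)\overline{Z_{(\beta,\omega)}(v_j)}\big)=0$ of a non-proportional pair of Mukai vectors is a hyperbolic line in $\bb H$. The worry you flag at the end is handled in the paper exactly as you hoped: by \cite[Proposition 9.3]{Bri} the relevant set $I$ of Mukai vectors (of objects whose mass is bounded by $\mca S$ over $B$) is finite, and each wall $W_\gamma$ is by definition the locus $Z(v_i)/Z(v_j)\in\bb R_{>0}$ for a single non-proportional pair $(v_i,v_j)\in I\times I$, so there is no assembly of several loci to disentangle. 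The one genuine difference is how the rank-zero classes are treated: the paper first applies a spherical twist $T_{mL}$ (by $\mca O_X(mL)$, $m\gg0$) so that every vector in $T_{mL}^H(I)$ has nonzero rank, checks that the resulting equation defines a hyperbolic line (implicitly via the formula of Remark \ref{Kaw4.0}, which needs nonzero rank), and then transports back using the isometry statement of Theorem \ref{thm1}; your uniform computation $Z_z(v)=-drz^2+2dnz-s$ with the factorization $p_1(z)p_2(\bar z)-p_1(\bar z)p_2(z)=(z-\bar z)\big(Az\bar z+B(z+\bar z)+C\big)$ covers $r=0$ on the same footing, so it avoids both the auxiliary twist and the appeal to Theorem \ref{thm1}, and it makes explicit the "one can easily check" step of the paper, with the degenerate case $A=B=C=0$ correctly excluded by non-proportionality. (Only a cosmetic caveat: equation (\ref{key}) itself is stated for $r_E\neq0$, so your quadratic expression is best viewed as derived directly from the Mukai pairing, as you in fact do, rather than as a rewriting of (\ref{key}).)
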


\begin{proof}
Following \cite[Proposition 9.3]{Bri} let $\mca T$ be the set of objects 
\[
\mca T =\{ A \in D(X) | \exists E \in \mca S, \exists \sigma \in B\text{ such that }m_{\sigma}(A) \leq m_{\sigma }(E)  \}. 
\]
We put the set of Mukai vectors in $\mca T$ by $I =\{ v(A) | A \in \mca T  \}$ 
and let $\gamma $ be the pair $\gamma = (v_i, v_j) \in I \times I$ which are not proportional. 
As was shown in \cite[Proposition 9.3]{Bri}, each wall component $W_{\gamma}$ is given by 
\[
W_{\gamma} = \{ \sigma =(\mca A, Z) \in \Stabd(X) | Z(v_i)/Z(v_j) \in \bb R_{>0} \}.   
\]
We put $W_{\gamma}/\tGL^+(2, \bb R)$ by $\mf{W}_{\gamma}$. 
It is enough to prove that $\mf{W}_{\gamma}$ is geodesic in $\Stab^{\mr{n}}(X)$.

Since $I$ is finite set (Recall that $\mca T$ has bounded mass) 
we can take a sufficiently large $m \in \bb Z $ so that the rank of all vectors in $T_{mL}^H(I)$ are not $0$. 
For the set $T_{mL}^H(I)$ we define $\mf W_{\gamma}^T$ by 
\[
\mf W_{\gamma }^T= \{ [\sigma] =[(\mca A, Z)] \in \Stab^{\mr{n}}(X) | Z(T_{mL}^H(v_i))/ Z(T^H_{mL}(v_j)) \in \bb R_{>0}  \}. 
\]
We may assume the central charge of $[\sigma] \in \mf W_{\gamma}^T$ is given by 
\[
Z(E)=  \< \exp(\beta+ \sqrt{-1}\omega), v(E) \>
\]
where $(\beta, \omega ) \in \mf{H}(X)$. 

We note that $\sigma\in \mf W_{\gamma}^{T}$ satisfies the following equation
\begin{equation}
\mf{Im} Z(T_{mL}^H(v_i)) \overline{Z(T_{mL}^H(v_j))}=0.  \label{circle}
\end{equation}
Then one can easily check that the equation (\ref{circle}) defines hyperbolic line in $\mf{H}(X)$. 
Since the hyperbolic structure is induced from $\mf{H}(X)$ the set $\mf W_{\gamma}^T$ is geodesic also in $\Stab^{\mr{n}}(X)$.  
Since we have $T_{mL}^{\mr{n}} \mf{W}_{\gamma}^T = \mf{W}_{\gamma}$ the set $\mf W_{\gamma}$ is also geodesic in $\Stab^{\mr{n}}(X)$ by Theorem \ref{thm1}. 
\end{proof}

\section{Revisit of Orlov's theorem via hyperbolic structure}\label{6}

In this section we demonstrate applications of the hyperbolic structure on $\Stab^{\mr{n}}(X)$. 
Mainly we prove Orlov's theorem without the global Torelli theorem but with assuming the connectedness of $\Stab(X)$ in Proposition \ref{Orlov}. 
Hence our application suggests that Bridgeland's theory substitutes for the global Torelli theorem. 

\subsection{Strategy for Proposition \ref{Orlov}}\label{strategy}
Since the proof of Proposition \ref{Orlov} is technical, we explain the strategy and the roles of some lemmas which we prepare in \S \ref{ss6.2}. 
Proposition \ref{Orlov} will be proved in \S \ref{ss6.3}.

If we have an equivalence $\Phi \colon D(Y) \to D(X)$ preserving the distinguished component then there exists $\Psi \in W(X)$ such that $(\Psi \circ \Phi)_* U(Y) \cap V(X) \neq \emptyset$ by Proposition \ref{3.3}. 
We want to take the large volume limit in the domain $(\Psi \circ \Phi)_* U(Y) \cap V(X)$. 
Because of the complicatedness of the set $V(X)$, we consider the subset $V(X)_{>2}=\{ \sigma _{(\beta, \omega)} \in V(X) | \omega ^2 >2 \}$ and focus on the domain $D_{>2}= (\Psi \circ \Phi)_* U(Y) \cap V(X)_{>2}$. 

To take the large volume limit, we have to know the shape of the domain $D_{>2}$. 
To know the shape of $D_{>2}$ we have to see where the boundary $(\Psi \circ \Phi )_* \partial U(Y)$ appears in $\Stabd(X)$. 
As we showed in Lemma \ref{proposition}, any connected component of $\partial U(Y)$ is the product of $\tGL^+(2, \bb R)$ and  a hyperbolic segment spanned by two associated points. 
Since any equivalence $D(Y) \to D(X)$ induces an isometry between the normalized spaces $\Stab^{\mr{n}}(Y)  \to \Stab^{\mr{n}}(X)$ by Theorem \ref{thm1}, 
we see that the image $(\Psi \circ \Phi )_* \partial U(Y)$ is also the products of $\tGL^{+}(2, \bb R)$ and hyperbolic segments spanned by two associated points (See also Lemma \ref{6.1} below). 
This is the reason why the hyperbolic metric on $\Stab^{\mr{n}}(X)$ is important for us.

Here we have to recall that $\Stabd(X)$ is conjecturally $\tGL^+(2, \bb R)$-bundle over the upper half plane $\bb H$. 
Since we don't have the explicit isomorphism $\Stab^{\mr{n}}(X) \to \bb H$ yet, 
it is impossible to observe the place $(\Psi \circ \Phi )_* \partial U(Y)$ in $\Stabd(X)$. 
Instead of this observation, we study the numerical information of $(\Psi \circ \Phi )_* \partial U(Y)$, namely the image of $(\Psi \circ \Phi )_* \partial U(Y)$ by the quotient map $\pi _{\mf H} \colon \Stabd(X) \to  \mca P^+_0(X) \to\mf{H}_0(X)$. 

Set $\mf{W}= \pi _{\mf H} \big((\Psi \circ \Phi )_* \partial U(Y) \big)$. 
As we showed in Lemma \ref{proposition}, $\mf W$ is the disjoint sum of hyperbolic segments. 
As we show in Lemma \ref{6.2} later, there are two types (I) and (II) of components of $\mf{W}$. 
The type (I) is a hyperbolic segment which does not intersect the domain $\pi _{\mf H}(V(X)_{>2})$ 
and the type (II) is a hyperbolic segment which does intersect $\pi _{\mf H}(V(X)_{>2})$. 
Recall that our basic strategy is to take the limit in the domain $V(X)_{>2}$. 
If the family of type (II) components is unbounded in $\pi_{\mf H}(V(X)_{>2})$, it may be impossible to take the large volume limit.  
Hence we have to show the boundedness of type (II) components (Proposition \ref{6.3} and Corollary \ref{6.4}). 

\subsection{Technical lemmas}\label{ss6.2}

We prepare some technical lemmas. 
Throughout this section we use the following notations. 

For a K3 surface $(X, L)$ we put $L^2 =2d$. 
Suppose that $E \in D(X)$ satisfies $v(E)^2=0$ and $A \in D(X)$ is spherical. 
We put their Mukai vectors respectively 
\[
v(E) = r_E \+ n_E L \+ s_E\text{ and }v(A)= r_A \+ n_A L \+ s_A. 
\]
We denote $(\beta, \omega ) \in \bar{\mf{H}}(X)$ by $(xL, yL)$.

The main object is the following set 
\[
\mf{W}(A,E) = \{ (\beta, \omega)\in \bar{\mf{H}}(X) | \mf{Im}Z_{(\beta, \omega)}(E) \overline{Z_{(\beta, \omega)}(A)} =0  \}. 
\]
One can easily check that the condition $\mf{Im}Z_{(\beta, \omega)}(E) \overline{Z_{(\beta, \omega)}(A)} =0 $ is equivalent to 
\[
N_{A,E}(x,y)= \lambda _E (\frac{-1}{r_A} + dr_A y^2 - \frac{d \lambda _A^2}{r_A})- \lambda _A (d r_E y^2 - \frac{\lambda _E^2}{r_E} )=0, 
\]
where $\lambda _E = n_E - r_E x$ and $\lambda _A = n_A - r_A x$. 
We also have 
\begin{equation}
N_{A, E}(x,y)= d (r_A n_E -r_E n_A)y^2 + d\lambda _E \lambda _A (\frac{n_E}{r_E}- \frac{n_A}{r_A}) -\frac{\lambda _E}{r_A}.  \label{NAE}
\end{equation}

\begin{lemma}\label{6.1}
Suppose that $0< r_E $ and $\frac{n_E}{r_E}\neq \frac{n_A}{r_A} $. 
Then $\mf{W}(A,E)$ is the half circle passing through the following 4 points:
\[
(x, y)=(\alpha _E, 0), (\frac{n_E}{r_E}, 0), (\frac{n_A}{r_A}, \frac{1}{\sqrt{d}|r_A|})\text{ and }(\alpha _A, \frac{1}{\sqrt{d}|r_A|}),
\]
where $\alpha _E = \frac{n_A}{r_A}- \frac{1}{dr_A^2(\frac{n_E}{r_E}-\frac{n_A}{r_A})}$ and $\alpha _A = \frac{n_E}{r_E}- \frac{1}{dr_A^2(\frac{n_E}{r_E}-\frac{n_A}{r_A})}$. 
In particular the set $\mf{W}(A, E)$ is a hyperbolic line passing through above 4 points. 
\end{lemma}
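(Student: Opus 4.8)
The plan is to exploit the explicit polynomial $N_{A,E}(x,y)$ of (\ref{NAE}): I would first observe that its zero locus is a Euclidean circle with centre on the real axis (equivalently, a hyperbolic line), and then verify the four listed points by direct substitution.

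First I would expand $N_{A,E}$ as a polynomial in $x$, using $\lambda_E=n_E-r_Ex$ and $\lambda_A=n_A-r_Ax$. The only term carrying $y^2$ is $d(r_An_E-r_En_A)y^2$, and the only term carrying $x^2$ is $d\lambda_E\lambda_A(\frac{n_E}{r_E}-\frac{n_A}{r_A})$, whose $x^2$-coefficient is $d\,r_Er_A(\frac{n_E}{r_E}-\frac{n_A}{r_A})=d(r_An_E-r_En_A)$. Hence, writing $C:=d(r_An_E-r_En_A)$,
\[
N_{A,E}(x,y)=C\,(x^2+y^2)+bx+c
\]
for some real constants $b,c$. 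Since $r_E>0$, $r_A\neq 0$ (because $v(A)^2=-2$; cf. Remark \ref{ass.pt.rmk}) and $\frac{n_E}{r_E}\neq\frac{n_A}{r_A}$, the leading coefficient $C$ is nonzero, so $N_{A,E}=0$ is the equation of a circle (a priori possibly degenerate) with centre on the real axis; once it is shown to be non-degenerate, its intersection with $\bb H\subset\bar{\mf{H}}(X)$ is a hyperbolic line.

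It then remains to check the four points, which is a routine computation; I would record it as follows. Put $\mu:=\frac{n_E}{r_E}-\frac{n_A}{r_A}\neq 0$, so that $r_An_E-r_En_A=r_Er_A\mu$. At $(\frac{n_E}{r_E},0)$ one has $\lambda_E=0$ and $y=0$, so all three terms of (\ref{NAE}) vanish; at $(\frac{n_A}{r_A},\frac{1}{\sqrt d|r_A|})$ one has $\lambda_A=0$, and the surviving first and third terms cancel using $y^2=\frac{1}{dr_A^2}$. For $(\alpha_E,0)$ a short calculation gives $\lambda_A=\frac{1}{dr_A\mu}$, whence the middle term equals $\frac{\lambda_E}{r_A}$ and cancels the last one; for $(\alpha_A,\frac{1}{\sqrt d|r_A|})$ one gets $\lambda_E=\frac{r_E}{dr_A^2\mu}$ and $\lambda_A=-r_A\mu+\frac{1}{dr_A\mu}$, and the three terms of (\ref{NAE}) again sum to $0$. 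In particular the locus is non-empty, and the two real points $(\frac{n_E}{r_E},0)$ and $(\alpha_E,0)$ are distinct, since their difference equals $-\mu-\frac{1}{dr_A^2\mu}\neq 0$ (both summands have the sign of $-\mu$). Hence the circle of the previous step is a genuine half circle meeting the real axis exactly in those two points, which are then its ideal endpoints, and it passes through all four listed points; this proves the claim, since a circle centred on the real axis is determined by two of its points. The only part requiring any care is this non-degeneracy statement; beyond it, the content of the lemma is precisely the coincidence of the $x^2$- and $y^2$-coefficients in (\ref{NAE}).
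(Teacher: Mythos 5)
Your proposal is correct and is essentially the paper's argument: the paper's proof of Lemma \ref{6.1} simply says the result follows by direct calculation from (\ref{NAE}), and your computation (equal $x^2$- and $y^2$-coefficients giving a circle centred on the real axis, substitution of the four points, and the non-degeneracy check via $\alpha_E\neq\frac{n_E}{r_E}$) is exactly that calculation carried out in full.
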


\begin{proof}
We can prove Lemma \ref{6.1} by the simple calculation of (\ref{NAE}). 
\end{proof}
In particular the first two points are associated points with respectively $T_A(E)$ and $E$. 
Hence we put them respectively 
\begin{itemize}
\item $p(T_A(E))= (\alpha _E, 0)$, 
\item $p(E)= (\frac{n_E}{r_E}, 0)$,
\item $ p(A) = (\frac{n_A}{r_A}, \frac{1}{\sqrt{d}|r_A|})$ and
\item $q=(\alpha _A, \frac{1}{\sqrt{d}|r_A|})$. 
\end{itemize}
We remark that if $\frac{n_E}{r_E} = \frac{n_A}{r_A}$ then $\mf {W}(A,E)$ is a hyperbolic line defined by $x= \frac{n_E}{r_E}$.

\begin{lemma}\label{6.2}
Suppose that $0< r_E $ and $0< \frac{n_E}{r_E}- \frac{n_A}{r_A} $. 
Then there two types of the configuration of the above four points on $\mf{W}(A,E)$:
\begin{itemize}
\item[(I)] If $\frac{1}{d|r_A|} \leq \frac{n_E}{r_E}- \frac{n_A}{r_A}$ then we have 
$\alpha _E < \frac{n_A}{r_A}\leq  \alpha _A < \frac{n_E}{r_E}$. See also Figure \ref{typeI} below. 
\item[(II)] If $0< \frac{n_E}{r_E}- \frac{n_A}{r_A} < \frac{1}{d|r_A|} $ then we have $\alpha _E < \alpha _A < \frac{n_A}{r_A} < \frac{n_E}{r_E}$. See also Figure \ref{typeII} below. 
\end{itemize}
\end{lemma}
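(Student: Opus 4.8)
The plan is to read Lemma \ref{6.2} off directly from the explicit half-circle description of $\mf{W}(A,E)$ in Lemma \ref{6.1}, reducing the two configurations to a short sign comparison among four real numbers. First I would invoke Lemma \ref{6.1}: since the hypotheses $0 < r_E$ and $0 < \frac{n_E}{r_E} - \frac{n_A}{r_A}$ are in force (in particular $\frac{n_E}{r_E} \neq \frac{n_A}{r_A}$), that lemma identifies $\mf{W}(A,E)$ with the hyperbolic geodesic in $\bar{\mf{H}}(X)$ through the four points
\[
p(T_A(E)) = (\alpha_E, 0),\quad p(E) = \Big(\tfrac{n_E}{r_E}, 0\Big),\quad p(A) = \Big(\tfrac{n_A}{r_A}, \tfrac{1}{\sqrt{d}\,|r_A|}\Big),\quad q = \Big(\alpha_A, \tfrac{1}{\sqrt{d}\,|r_A|}\Big).
\]
Since $\frac{n_E}{r_E} \neq \frac{n_A}{r_A}$, this geodesic is a genuine Euclidean half-circle rather than a vertical line, and along such a half-circle the $x$-coordinate is strictly monotone. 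Hence the configuration of the four marked points along $\mf{W}(A,E)$ is nothing but the linear order of their abscissae $\alpha_E$, $\frac{n_A}{r_A}$, $\alpha_A$, $\frac{n_E}{r_E}$, and it remains only to order these.

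Set $t := \frac{n_E}{r_E} - \frac{n_A}{r_A}$; the hypotheses give $t > 0$, and $d > 0$, $r_A \neq 0$ give $\frac{1}{dr_A^2 t} > 0$. Three comparisons are then immediate and hold in both cases: $\alpha_E = \frac{n_A}{r_A} - \frac{1}{dr_A^2 t}$ gives $\alpha_E < \frac{n_A}{r_A}$; $\alpha_A = \frac{n_E}{r_E} - \frac{1}{dr_A^2 t}$ gives $\alpha_A < \frac{n_E}{r_E}$; and $\alpha_A - \alpha_E = t > 0$ gives $\alpha_E < \alpha_A$. Thus $\alpha_E$ is always the least and $\frac{n_E}{r_E}$ always the greatest of the four, and only the position of $\alpha_A$ relative to $\frac{n_A}{r_A}$ is still undetermined. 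For that I would compute
\[
\alpha_A - \frac{n_A}{r_A} \;=\; t - \frac{1}{d r_A^2 t} \;=\; \frac{d r_A^2 t^2 - 1}{d r_A^2 t},
\]
whose sign, the denominator being positive, is that of $d r_A^2 t^2 - 1$, and this is precisely the dichotomy of cases (I) and (II): in case (I) one gets $\alpha_A \geq \frac{n_A}{r_A}$, hence $\alpha_E < \frac{n_A}{r_A} \leq \alpha_A < \frac{n_E}{r_E}$; in case (II) one gets $\alpha_A < \frac{n_A}{r_A}$, hence $\alpha_E < \alpha_A < \frac{n_A}{r_A} < \frac{n_E}{r_E}$. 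I would conclude by recording the two resulting pictures in Figures \ref{typeI} and \ref{typeII}.

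I do not expect a genuine obstacle: the statement is a direct corollary of Lemma \ref{6.1} plus the one-line computation of $\alpha_A - \frac{n_A}{r_A}$. The two points that want a little care are (i) the remark that on a half-circle geodesic the configuration of points is read off from their $x$-coordinates, which is exactly where one uses that $\mf{W}(A,E)$ is not a vertical geodesic (guaranteed by $\frac{n_E}{r_E} \neq \frac{n_A}{r_A}$); and (ii) matching the inequality $d r_A^2 t^2 \geq 1$ with the threshold condition displayed in (I), and checking that the boundary case $d r_A^2 t^2 = 1$ (where $p(A) = q$) is placed inside case (I) — which is why the inequality $\frac{n_A}{r_A} \leq \alpha_A$ there is non-strict.
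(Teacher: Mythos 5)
Your route is exactly the paper's (the paper's proof of this lemma is literally ``simple calculations'' from Lemma \ref{6.1}), and the ordering computation itself is right: with $t=\frac{n_E}{r_E}-\frac{n_A}{r_A}>0$ one always has $\alpha_E<\frac{n_A}{r_A}$, $\alpha_E<\alpha_A<\frac{n_E}{r_E}$, and the whole content is the sign of $\alpha_A-\frac{n_A}{r_A}=t-\frac{1}{dr_A^2t}$, read off along the half-circle via the monotone $x$-coordinate.

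The one place you go wrong is precisely the point you flagged as ``matching the inequality with the threshold displayed in (I)'': $dr_A^2t^2\geq 1$ is equivalent to $t\geq\frac{1}{\sqrt{d}\,|r_A|}$, not to the printed condition $t\geq\frac{1}{d|r_A|}$. Since $d\geq 1$, these differ whenever $d>1$, and in the range $\frac{1}{d|r_A|}\leq t<\frac{1}{\sqrt{d}\,|r_A|}$ your computation gives $\alpha_A<\frac{n_A}{r_A}$, i.e.\ the type (II) configuration, contradicting case (I) as stated. So the dichotomy your calculation actually produces is at $\frac{1}{\sqrt{d}\,|r_A|}$; this is the threshold the paper itself uses in Proposition \ref{6.3} (whose hypothesis is $0<\frac{n_E}{r_E}-\frac{n_A}{r_A}<\frac{1}{\sqrt{d}\,|r_A|}$), so the $\frac{1}{d|r_A|}$ in the statement of Lemma \ref{6.2} is almost certainly a misprint. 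Your proof should say this explicitly and prove the lemma with the corrected threshold (keeping the boundary case $dr_A^2t^2=1$, where $p(A)=q$, in type (I), as you do), rather than assert that $dr_A^2t^2\geq 1$ ``is precisely'' the case division as printed; as written that identification is only valid for $d=1$. With that correction your argument is complete and is the intended one.
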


\begin{figure}[htbp]
 \begin{minipage}{0.6\hsize}
  \begin{center}
   \includegraphics[width=60mm, clip]{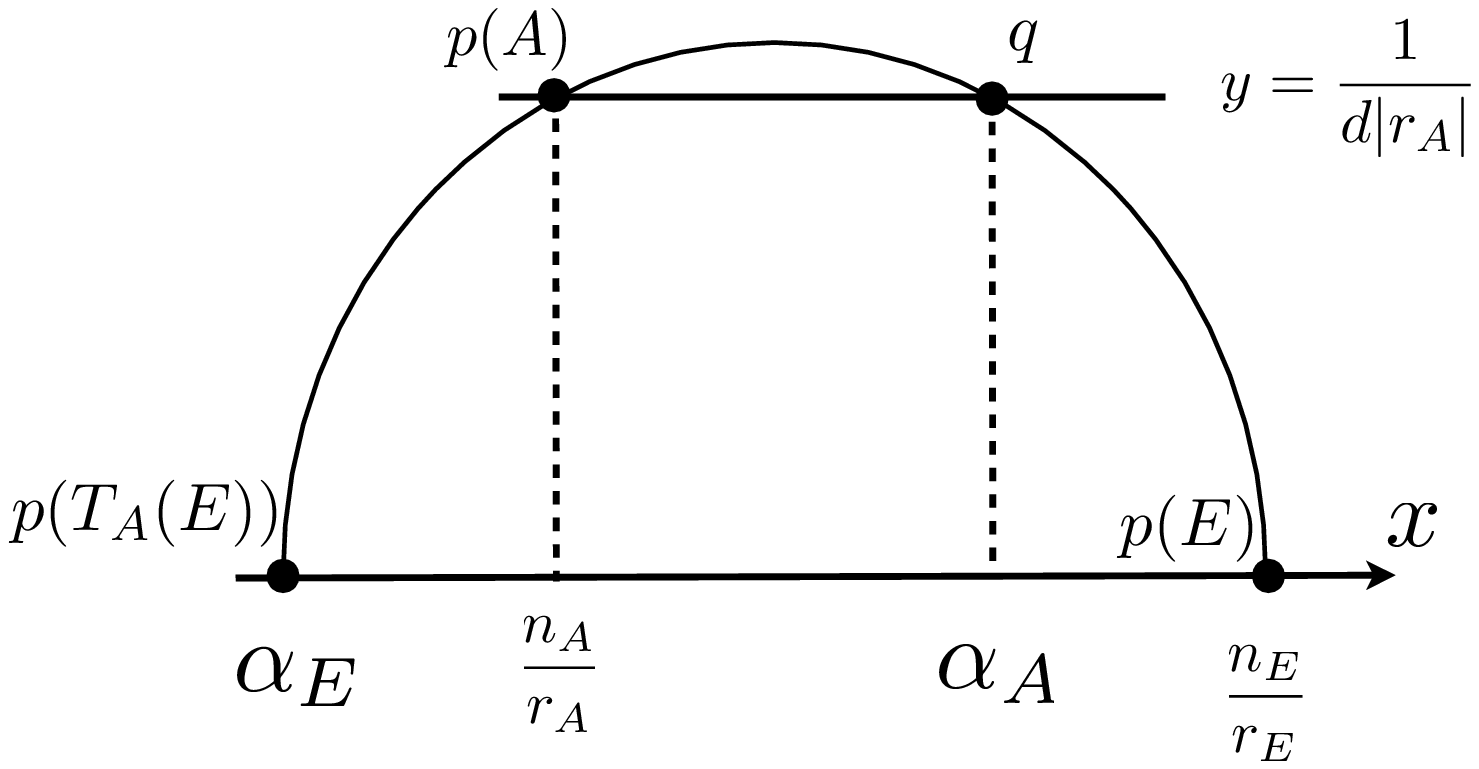}

  \caption{figure for type (I) in Lemma \ref{6.2}}
  \label{typeI}
   \includegraphics[width=60mm,clip]{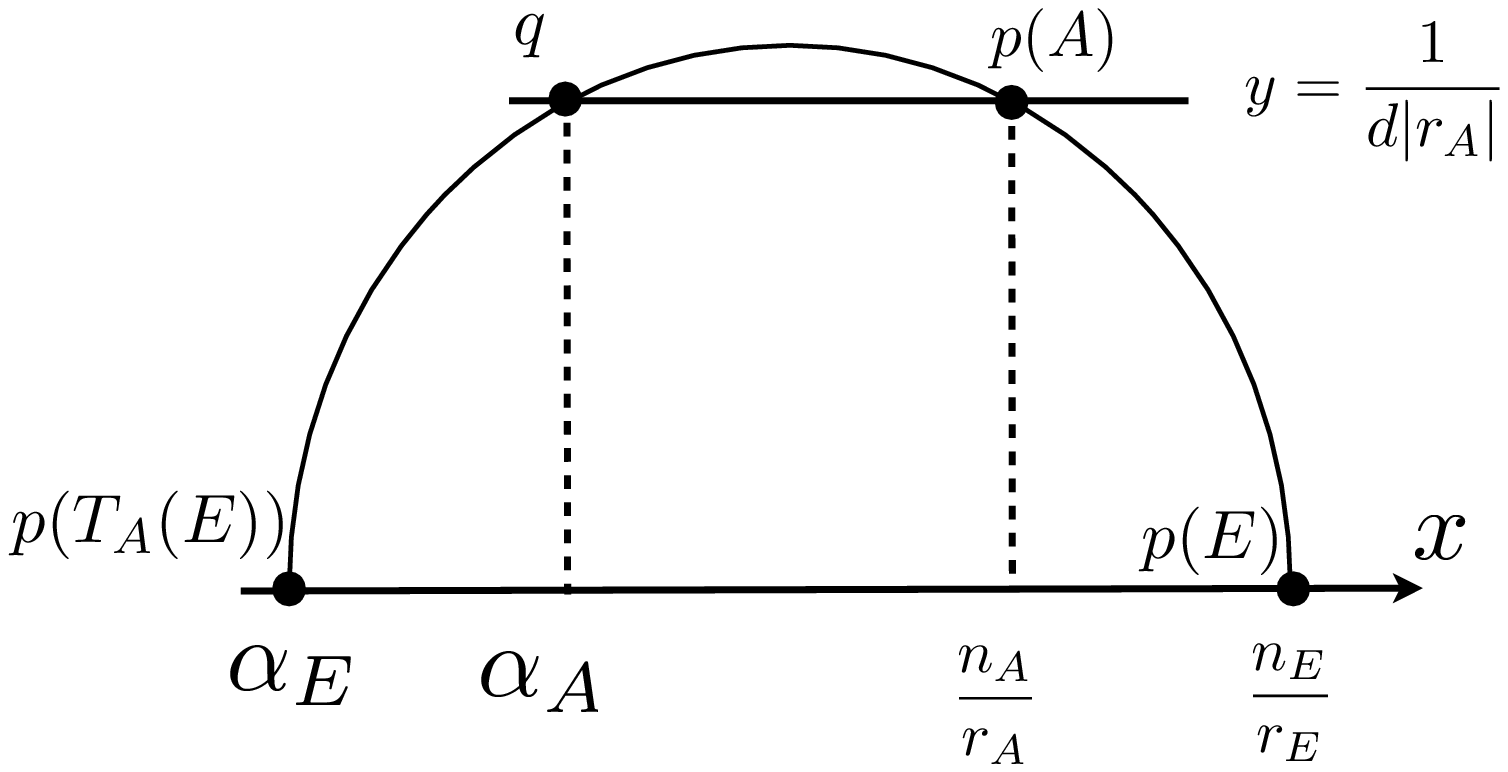}
  \end{center}
  \caption{figure for for type (II) in Lemma \ref{6.2}}
  \label{typeII}
 \end{minipage}
\end{figure}

\begin{proof}
Similarly to Lemma \ref{6.1} we could prove the assertion by simple calculations. 
\end{proof}

Let $\Phi \colon D(Y) \to D(X)$ be an equivalence preserving the distinguished component. 
Suppose $E= \Phi (\mca O_y)$. 
By Lemma \ref{proposition}, $\pi _{\mf H} (\Phi _* \partial U(Y))$ is the direct sum of hyperbolic segments $\overline{p(A) p(T_A(E))}$ spanned by two points $p(A)$ and $p(T_A(E))$. 
Clearly the segment $\overline{p(A)  p(T_A(E))}$ is a subset of $\mf{W}(A,E)$. 
Following Lemma \ref{6.2} we have the disjoint sum :
\begin{equation}
\pi _{\mf H} (\Phi _* \partial U(Y)) = \coprod_{\mr{type(I)}}\overline{p(A') p(T_{A'}(E)) } \sqcup  \coprod_{\mr{type(II)}}\overline{p(A)  p(T_{A}(E))}   . \label{daiji}
\end{equation}
Since the type (II) segments become obstructions when we take the large volume limit in $V(X)_{>2}$. 
Hence we have to show the boundedness of type (II) segments. 
To show this, we give an upper bound of the diameter of the type (II) half circle $\mf{W}(A,E)$ in the following proposition. 
Clearly from Lemma \ref{6.1} the diameter is given by $\frac{n_E}{r_E}- \alpha _E$.

\begin{proposition}\label{6.3}
Suppose that $r_E>0$ and $0< \frac{n_E}{r_E}- \frac{n_A}{r_A} < \frac{1}{\sqrt{d}|r_A|}$. 
Then we have 
\[
0 < \frac{n_E}{r_E} - \alpha _E \leq \frac{1}{r_E} + \frac{r_E}{d}. 
\]
\end{proposition}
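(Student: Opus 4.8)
The plan is to reduce the claimed inequality to a purely algebraic estimate in terms of the integer data of the Mukai vectors $v(A)=r_A\+ n_AL\+ s_A$ and $v(E)=r_E\+ n_EL\+ s_E$, and then to exploit the two structural constraints that are available: the hypothesis $0<\frac{n_E}{r_E}-\frac{n_A}{r_A}<\frac{1}{\sqrt d|r_A|}$, and the fact that both $E$ and $T_A(E)$ are isotropic (so $v(E)^2=0$ and $v(T_A(E))^2=0$, since $v(T_A(E))=v(E)+\<v(E),v(A)\>v(A)$ and $v(A)^2=-2$). First I would write down $\frac{n_E}{r_E}-\alpha_E$ explicitly using Lemma~\ref{6.1}: since $\alpha_E=\frac{n_A}{r_A}-\dfrac{1}{dr_A^2\big(\frac{n_E}{r_E}-\frac{n_A}{r_A}\big)}$, we get
\[
\frac{n_E}{r_E}-\alpha_E=\Big(\frac{n_E}{r_E}-\frac{n_A}{r_A}\Big)+\frac{1}{dr_A^2\big(\frac{n_E}{r_E}-\frac{n_A}{r_A}\big)}.
\]
Positivity is then immediate from $\frac{n_E}{r_E}-\frac{n_A}{r_A}>0$, which settles the left inequality.

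For the upper bound, set $t=\frac{n_E}{r_E}-\frac{n_A}{r_A}>0$, so the quantity to bound is $f(t)=t+\dfrac{1}{dr_A^2 t}$ on the interval $0<t<\frac{1}{\sqrt d|r_A|}$. The function $f$ is decreasing on $\big(0,\frac{1}{\sqrt d|r_A|}\big)$ (its minimum over $(0,\infty)$ is attained at $t=\frac{1}{\sqrt d|r_A|}$), so $f(t)>f\big(\frac{1}{\sqrt d|r_A|}\big)=\frac{2}{\sqrt d|r_A|}$ gives only a lower bound; the supremum of $f$ on the open interval is $+\infty$ as $t\to 0^+$. Hence the estimate cannot come from calculus alone: we need an honest arithmetic lower bound on $t=\frac{n_E}{r_E}-\frac{n_A}{r_A}=\frac{r_A n_E-r_E n_A}{r_A r_E}$. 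The key observation is that $r_A n_E-r_E n_A$ is a nonzero integer (nonzero because $\frac{n_E}{r_E}\neq\frac{n_A}{r_A}$ by hypothesis), so $|r_A n_E-r_E n_A|\ge 1$ and therefore $t\ge \frac{1}{|r_A r_E|}=\frac{1}{|r_A|r_E}$ (using $r_E>0$; note Lemma~\ref{3.2}(2) and Lemma~\ref{proposition} force the relevant ranks to have a fixed sign). Substituting this lower bound for $t$ into the \emph{second} summand and the upper bound $t<\frac{1}{\sqrt d|r_A|}$ into the first, we obtain
\[
\frac{n_E}{r_E}-\alpha_E \;=\; t+\frac{1}{dr_A^2 t}\;\le\;\frac{1}{\sqrt d|r_A|}+\frac{1}{dr_A^2}\cdot |r_A|r_E\;=\;\frac{1}{\sqrt d|r_A|}+\frac{r_E}{d|r_A|}.
\]

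It then remains to check that this bound is majorized by $\frac{1}{r_E}+\frac{r_E}{d}$, i.e.\ that $\frac{1}{\sqrt d|r_A|}+\frac{r_E}{d|r_A|}\le \frac{1}{r_E}+\frac{r_E}{d}$. Since $|r_A|\ge 1$, the left side is at most $\frac{1}{\sqrt d}+\frac{r_E}{d}$, so it suffices to see $\frac{1}{\sqrt d}\le\frac{1}{r_E}+\big(\frac{r_E}{d}-\frac{r_E}{d}\big)=\frac{1}{r_E}$ — which needs $r_E\le\sqrt d$; if that fails one absorbs the discrepancy into the $\frac{r_E}{d}$ term, using $|r_A|\ge 1$ more carefully, or sharpens the bound $t<\frac{1}{\sqrt d|r_A|}$ to $t\le\frac{1}{\sqrt d|r_A|}$ only in the limiting configuration. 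I expect the main obstacle to be exactly this last bookkeeping: getting from the clean intermediate bound $\frac{1}{\sqrt d|r_A|}+\frac{r_E}{d|r_A|}$ to the stated $\frac1{r_E}+\frac{r_E}{d}$ requires keeping track of the interplay between $|r_A|$, $r_E$ and $d$, and is where one must use the isotropy relations $v(E)^2=v(T_A(E))^2=0$ (which constrain $s_E,s_A$ and hence relate $n_E^2,n_A^2$ to $r_E s_E, r_A s_A$) to rule out the cases where the naive estimate is too weak. The rest is the "simple calculation" already invoked for Lemmas~\ref{6.1} and~\ref{6.2}.
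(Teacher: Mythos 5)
Your computation of $\frac{n_E}{r_E}-\alpha_E=t+\frac{1}{dr_A^2t}$ with $t=\frac{n_E}{r_E}-\frac{n_A}{r_A}$, the positivity claim, and the arithmetic input $k:=r_An_E-r_En_A\in\bb Z\setminus\{0\}$, hence $|k|\ge 1$ and $t\ge\frac{1}{|r_A|r_E}$, all match the paper. But the final step has a genuine gap, and it is exactly the one you flag: estimating the two summands \emph{separately} (the first by $t<\frac{1}{\sqrt d|r_A|}$, the second by $t\ge\frac{1}{|r_A|r_E}$) is too lossy. Your intermediate bound $\frac{1}{\sqrt d|r_A|}+\frac{r_E}{d|r_A|}$ is, for $|r_A|=1$, equal to $\frac{1}{\sqrt d}+\frac{r_E}{d}$, and the hypotheses force $r_E>\sqrt d|k|\ge\sqrt d$ (since $t=\frac{|k|}{|r_A|r_E}<\frac{1}{\sqrt d|r_A|}$), so this is \emph{strictly larger} than the target $\frac{1}{r_E}+\frac{r_E}{d}$. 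Thus the "remaining bookkeeping" cannot be carried out from your inequality, and no appeal to the isotropy relations is needed or used at this point.

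The repair is to keep the two summands coupled, which is the paper's argument: writing
\[
\frac{n_E}{r_E}-\alpha_E=\frac{1}{|r_A|}\Big(\frac{|k|}{r_E}+\frac{r_E}{d|k|}\Big)\le\frac{|k|}{r_E}+\frac{r_E}{d|k|}
\]
(using $|r_A|\ge1$), the hypothesis $t<\frac{1}{\sqrt d|r_A|}$ translates into $\frac{|k|}{r_E}<\frac{1}{\sqrt d}$, i.e. $u:=\frac{r_E}{|k|}>\sqrt d$, which is precisely the region where $g(u)=\frac{1}{u}+\frac{u}{d}$ is increasing. Since $|k|\ge1$ gives $u\le r_E$, monotonicity yields $g(u)\le g(r_E)=\frac{1}{r_E}+\frac{r_E}{d}$, which is the stated bound. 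So your overall strategy (integrality of $k$ plus the hypothesis on $t$) is the right one, but the decoupled substitution must be replaced by this one-variable monotonicity argument in $u=r_E/|k|$.
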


\begin{proof}
By the assumption one easily sees $r_A \cdot( r_A n_E - r_E n_A) >0$. 
Hence we see 
\begin{eqnarray}
\frac{n_E}{r_E} - \alpha _E	&=& \Big( \frac{n_E}{r_E} - \frac{n_A}{r_A} \Big) + \frac{1}{dr_A^2 \Big( \frac{n_E}{r_E} - \frac{n_A}{r_A} \Big)} \notag \\
							&=& \Big| \frac{1}{r_A} \Big| \cdot \Big( \frac{ |r_An_E - r_E n_A |}{r_E} + \frac{r_E}{d | r_A n_E - r_E n_A|}  \Big) \notag \\
							&\leq&	\frac{|r_A n_E - r_E n_A|}{r_E} + \frac{r_E}{d |r_A n_E- r_E n_A|}. \label{aa} 
\end{eqnarray}
By the assumption we have 
\[
\frac{|r_A n_E - r_E n_A|}{r_E} < \frac{r_E}{d |r_A n_E - r_E n_A|}. 
\]
Since the continuous function $f(t)= \frac{1}{t}+\frac{t}{d}$  on $\bb R_ {>0}$ is an increasing function for $\frac{1}{t} < \frac{t}{d}$. 
Since we have $\frac{r_E}{| r_A n_E - r_E n_A|} \leq r_E$ the following inequality holds: 
\[
(\ref{aa}) \leq \frac{1}{r_E} + \frac{r_E}{d}. 
\]
Thus we have proved the inequality. 
\end{proof}

The following corollary is a simple paraphrase of Proposition \ref{6.3}. 
However it is crucial for the proof of our main result, Proposition \ref{Orlov}. 

\begin{corollary}\label{6.4}
Let $\Phi \colon D(Y) \to D(X)$ be an equivalence which preserves the distinguished component. 
Set $v(\Phi(\mca O_y)) = r \+ n L_X \+ s$ and $L_X ^2 =2d$ and assume $r >0$. 
Then the image $\pi _{\mf H} (\Phi _* \partial U(Y))$ is in the following shaded closed region $R(Y, \Phi)$ where $\pi_{\mf H}\colon \Stab^{\mr{n}} (X) \to \mf{H}_0(X)$  $:$ 
\begin{align*}
R(Y, \Phi) =\{ (xL_X, yL_X) \in \mf{H}(X)|  \Big(x-\frac{n}{r} + \frac{1}{2}\Big( \frac{1}{d}+ \frac{r}{d} \Big) \Big)^2 + y^2 \leq  \frac{1}{4}\Big( \frac{1}{d}+ \frac{r}{d} \Big)^2 ,  \\    
\Big(x-\frac{n}{r} - \frac{1}{2}\Big( \frac{1}{d}+ \frac{r}{d} \Big) \Big)^2 + y^2 \leq  \frac{1}{4}\Big( \frac{1}{d}+ \frac{r}{d} \Big)^2
 \mbox{ or }y^2 \leq \frac{1}{d}   \}.
\end{align*}
\begin{figure}[htbp]
  \begin{center}
   \includegraphics[width=70mm, clip]{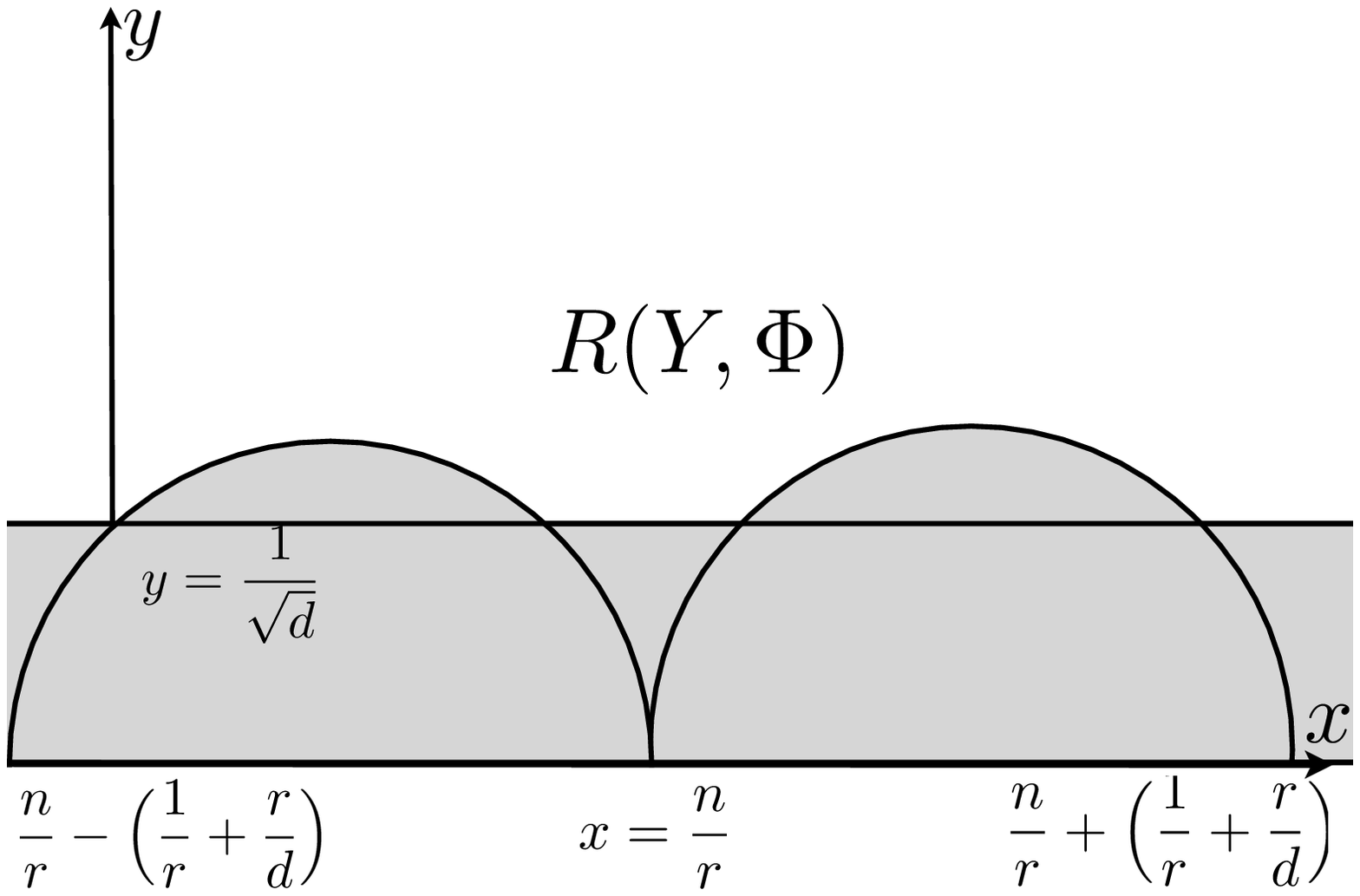}
  \caption{Figure for the region $R(Y, \Phi)$.}
  \label{RY}
\end{center}
\end{figure}
\end{corollary}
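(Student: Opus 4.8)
\textbf{Plan for the proof of Corollary \ref{6.4}.}
The plan is to combine the description \eqref{daiji} of $\pi_{\mf H}(\Phi_*\partial U(Y))$ as a disjoint union of hyperbolic segments with the diameter bound of Proposition \ref{6.3}, treating the type (I) and type (II) segments separately. Write $E=\Phi(\mca O_y)$ and $v(E)=r\+ nL_X\+ s$ with $r>0$; by Lemma \ref{3.1} this rank assumption is exactly the hypothesis under which every spherical sheaf $A$ contributing a boundary component gives a segment $\overline{p(A)p(T_A(E))}\subset\mf W(A,E)$ described by Lemmas \ref{6.1} and \ref{6.2}. So it suffices to show that each such segment lies in the shaded region $R(Y,\Phi)$.

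First I would dispose of the type (I) segments. By Lemma \ref{6.2}(I), for a type (I) component the four points satisfy $\alpha_E<\tfrac{n_A}{r_A}\le\alpha_A<\tfrac{n_E}{r_E}$, and the relevant segment $\overline{p(A)p(T_A(E))}$ joins $p(A)=(\tfrac{n_A}{r_A},\tfrac{1}{\sqrt d|r_A|})$ to $p(T_A(E))=(\alpha_E,0)$; since $|r_A|\ge 1$ the height of $p(A)$ is at most $\tfrac{1}{\sqrt d}$, hence $y^2\le \tfrac1d$ along the whole segment (the half circle through these points lies, on the arc between them, below height $\tfrac{1}{\sqrt d|r_A|}$). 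Thus every type (I) segment lies in the third region $\{y^2\le \tfrac1d\}$ appearing in the definition of $R(Y,\Phi)$. Next, for the type (II) segments, Proposition \ref{6.3} (whose hypothesis $0<\tfrac{n_E}{r_E}-\tfrac{n_A}{r_A}<\tfrac{1}{\sqrt d|r_A|}$ is precisely the type (II) condition refined) gives $0<\tfrac{n_E}{r_E}-\alpha_E\le \tfrac1r+\tfrac rd$, i.e. the half circle $\mf W(A,E)$ — which by Lemma \ref{6.1} has its two real endpoints at $\alpha_E$ and $\tfrac{n_E}{r_E}$, hence diameter $\tfrac{n_E}{r_E}-\alpha_E$ — has diameter at most $\tfrac1d+\tfrac rd$. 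Now I would observe, purely by elementary plane geometry, that a half circle in the upper half plane with real endpoints both lying in the closed interval $[\tfrac nr-(\tfrac1d+\tfrac rd),\,\tfrac nr+(\tfrac1d+\tfrac rd)]$ and of diameter at most $\tfrac1d+\tfrac rd$ is contained in the union of the two disks $\big(x-\tfrac nr\pm\tfrac12(\tfrac1d+\tfrac rd)\big)^2+y^2\le \tfrac14(\tfrac1d+\tfrac rd)^2$; this uses that $p(T_A(E))=(\alpha_E,0)$ and $p(E)=(\tfrac{n_E}{r_E},0)$ both lie in that interval (the bound $|\tfrac{n_E}{r_E}-\tfrac nr|$ is controlled, as $E=\Phi(\mca O_y)$ forces $n_E=n$, $r_E=r$ up to the identification of $\mf H(Y)$ with $\mf H(X)$, so in fact $\tfrac{n_E}{r_E}=\tfrac nr$ and the two disks are centered symmetrically about $\tfrac nr$). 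Putting the two cases together yields the stated inclusion $\pi_{\mf H}(\Phi_*\partial U(Y))\subset R(Y,\Phi)$.

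The main obstacle I anticipate is bookkeeping rather than depth: one must be careful that the sign conventions of Lemma \ref{6.2} (the hypothesis there is $0<\tfrac{n_E}{r_E}-\tfrac{n_A}{r_A}$, but $\partial U(Y)$ produces spherical $A$ on \emph{both} sides of $p(E)$, so one needs the companion case $\tfrac{n_A}{r_A}-\tfrac{n_E}{r_E}>0$ obtained by the reflection $x\mapsto 2\tfrac nr-x$), and that the endpoints $\alpha_E$ and $\tfrac{n_E}{r_E}$ of each type (II) circle indeed fall in the interval of half-length $\tfrac1d+\tfrac rd$ about $\tfrac nr$ rather than just having small diameter — it is the combination of "diameter small" and "one endpoint equals $\tfrac nr$" that forces containment in the \emph{two symmetric} disks rather than in one large disk. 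Once the picture in Figure \ref{RY} is set up correctly, each containment is a one-line estimate from Lemma \ref{6.1} and Proposition \ref{6.3}.
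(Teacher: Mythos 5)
Your proposal is correct and takes essentially the same route as the paper's proof: decompose $\pi_{\mf H}(\Phi_*\partial U(Y))$ as in (\ref{daiji}), place the type (I) segments in $\{y^2\le \frac{1}{d}\}$, and use Proposition \ref{6.3} to confine each type (II) half circle (one of whose real endpoints is $\frac{n}{r}$) to one of the two symmetric disks. The only caveat is that Proposition \ref{6.3} gives the diameter bound $\frac{1}{r}+\frac{r}{d}$ while the disks in the statement have diameter $\frac{1}{d}+\frac{r}{d}$, and your silent passage from the former to the latter reproduces exactly the same discrepancy that is already present in the paper's own proof.
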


\begin{proof}
As we explained in (\ref{daiji})
, we see that 
\[
\pi _{\mf H} (\Phi _* \partial U(Y)) = \coprod _{\mr{type(I)}}\overline{p(A') p(T_{A'}(\Phi(\mca O_y)))} \sqcup \coprod _{\mr{type(II)}}\overline{p(A) p(T_{A}(\Phi(\mca O_y)))}
\]
where $A$ and $A'$ are spherical object of $D(X)$. 
Clearly type (I) hyperbolic segments $\overline{p(A') p(T_{A'}(\Phi(\mca O_y)))}$ are in the following region:
\[
\{ (xL_X, y L_X ) |   y^2 \leq \frac{1}{d} \}. 
\]
By Proposition \ref{6.3}, the type (II) hyperbolic segments are in the region
\begin{align*}
\{ (xL_X, yL_X) \in \mf{H}(X)|  \Big(x-\frac{n}{r} + \frac{1}{2}\Big( \frac{1}{d}+ \frac{r}{d} \Big) \Big)^2 + y^2 \leq  \frac{1}{4}\Big( \frac{1}{d}+ \frac{r}{d} \Big)^2 \mbox{ or } \\    
\Big(x-\frac{n}{r} - \frac{1}{2}\Big( \frac{1}{d}+ \frac{r}{d} \Big) \Big)^2 + y^2 \leq  \frac{1}{4}\Big( \frac{1}{d}+ \frac{r}{d} \Big)^2 \}. 
\end{align*}
This gives the proof. 
\end{proof}

\subsection{Revisit of Orlov's theorem}\label{ss6.3}

We prove the main result of this section. 
\begin{proposition}\label{Orlov}
Let $(X,L_X)$ be a projective K3 surface with $\rho (X)=1$ and $(Y, L_Y)$ a Fourier-Mukai partner of $(X,L_X)$. 
If an equivalence $\Phi \colon  D(Y) \to D(X)$ preserves the distinguished component, 
then $Y$ is isomorphic to the fine moduli space of Gieseker stable torsion free sheaves. 
\end{proposition}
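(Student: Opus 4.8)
\emph{Sketch of the intended argument.} The plan is to transport the skyscraper sheaves of $Y$ along $\Phi$, push the central charge into the large volume limit while staying inside $\Phi_*U(Y)$, recognise the images of the skyscrapers as Gieseker stable sheaves, and then read off an isomorphism of $Y$ with their moduli space.

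First set $v_0:=v(\Phi(\mca O_y))$; since $\Phi$ is an equivalence, $v_0^2=v(\mca O_y)^2=0$ and $v_0$ is primitive. By Lemma \ref{3.1}, either $r(v_0)=0$, in which case Lemma \ref{3.1} identifies $\Phi^{\mca N}$ with the action of a line bundle twist $(M\otimes-)^{\mca N}$, and after composing $\Phi$ with $\otimes M^{-1}$ and a shift we may assume $\Phi^{\mca N}=\mr{id}$ and $v_0=0\+0\+1$; or $r(v_0)\neq0$, in which case after a shift $r:=r(v_0)>0$. In either case I would then use Proposition \ref{3.3} to compose $\Phi$ with a suitable $\Psi\in W(X)$ so that $\Phi_*U(Y)\cap V(X)\neq\emptyset$; since $W(X)\subset\Ker(\kappa)$ acts trivially on $\mca N(X)$ (see the proof of Theorem \ref{thm2}), this does not change $v_0$, and $\Phi$ still preserves $\Stabd(X)$.

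The next ingredient is the structural observation that, for any $\sigma_{(\beta,\omega)}\in V(X)$, a $\sigma_{(\beta,\omega)}$-stable object $E$ with $r(v(E))\geq0$ is a coherent sheaf lying in $\mca T_{(\beta,\omega)}$: indeed $E$ sits in a short exact sequence $0\to H^{-1}(E)[1]\to E\to H^0(E)\to0$ in the heart $\mca A_{(\beta,\omega)}$, and stability forces $H^{-1}(E)[1]=0$, since otherwise $H^{-1}(E)[1]=E$ and then $r(v(E))=-r(H^{-1}(E))<0$ because $H^{-1}(E)\in\mca F_{(\beta,\omega)}$ is torsion-free of positive rank. As $\Phi(\mca O_y)$ is $\sigma$-stable for every $\sigma\in\Phi_*U(Y)$, in particular for those in $\Phi_*U(Y)\cap V(X)$, it follows that $\Phi(\mca O_y)$ is a coherent sheaf. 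When $r(v_0)=0$ this sheaf has rank $0$ and Mukai vector $0\+0\+1$, hence is a length one skyscraper $\mca O_{f(y)}$; thus $\Phi$ sends points to points and $Y\cong X\cong M_{L_X}(1\+0\+0)$, which settles that case, so I may assume $r=r(v_0)>0$.

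It remains to upgrade ``$\sigma$-stable sheaf'' to ``Gieseker stable sheaf'' by moving $\sigma$ into the large volume limit inside $\Phi_*U(Y)\cap V(X)$. Normalising and using Theorem \ref{thm1}, Proposition \ref{2.1} and Lemma \ref{3.2}, the set $\pi_{\mf H}(\Phi_*U(Y))$ is the image of $\mca V(Y)\subset\mf{H}(Y)$ under the linear fractional transformation of Lemma \ref{3.2}, so it is obtained from $\mf{H}(X)$ by deleting countably many hyperbolic segments, which by Lemma \ref{proposition} are exactly the segments $\overline{p(A)\,p(T_A(\Phi(\mca O_y)))}$ with $A$ spherical. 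By Corollary \ref{6.4} all of these lie in the bounded region $R(Y,\Phi)$, whence $\pi_{\mf H}(V(X)_{>2})\setminus R(Y,\Phi)$ is a nonempty connected unbounded open subset of $\pi_{\mf H}(\Phi_*U(Y))$; choosing a point $(0,\omega)$ in it with $\omega^2$ arbitrarily large yields stability conditions in $\Phi_*U(Y)\cap V(X)$ for which $\Phi(\mca O_y)$ is $\sigma_{(0,\omega)}$-semistable. Combined with the fact that $\Phi(\mca O_y)$ is a sheaf, the large volume limit description of semistable objects (\cite[\S 14]{Bri}) then forces $\Phi(\mca O_y)$ to be a Gieseker stable torsion-free sheaf with respect to $L_X$, of the primitive isotropic class $v_0$. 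Finally, as $y$ ranges over $Y$ these sheaves are the fibres of the (suitably shifted) Fourier--Mukai kernel of $\Phi$, a sheaf on $Y\times X$ flat over $Y$ with all fibres Gieseker stable of class $v_0$; this induces a morphism $f\colon Y\to M_{L_X}(v_0)$, injective because $\Phi$ is injective on isomorphism classes of skyscrapers. Since $v_0$ is primitive and isotropic and $\mr{NS}(X)=\bb ZL_X$, the moduli space $M_{L_X}(v_0)$ is a smooth projective (K3) surface, so $f$ is an injective morphism of smooth projective surfaces onto an irreducible surface of the same dimension, hence surjective, hence an isomorphism; and the kernel of $\Phi$ realises $M_{L_X}(v_0)$ as a fine moduli space, so $Y\cong M_{L_X}(v_0)$ is the fine moduli space of Gieseker stable torsion-free sheaves. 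The main obstacle in this scheme is exactly the step that allows one to reach the large volume limit without leaving $\Phi_*U(Y)\cap V(X)$; this is where the hyperbolic metric does the real work, through $\Phi_*$ being an isometry (Theorem \ref{thm1}), the components of $\partial U(Y)$ being hyperbolic segments (Lemma \ref{proposition}), and the diameter estimate of Proposition \ref{6.3} and Corollary \ref{6.4}.
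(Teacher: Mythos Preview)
Your overall strategy matches the paper's, and you correctly identify the essential ingredients (Theorem~\ref{thm1}, Lemma~\ref{proposition}, Corollary~\ref{6.4}, and the large volume limit). However, there is a genuine gap at the crucial step.

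You argue that $\pi_{\mf H}(V(X)_{>2})\setminus R(Y,\Phi)$ is a nonempty open subset of $\pi_{\mf H}(\Phi_*U(Y))$, and then assert that choosing $(0,\omega)$ in this set ``yields stability conditions in $\Phi_*U(Y)\cap V(X)$''. This inference is invalid: by Corollary~\ref{covering2} the map $\pi^{\mr n}\colon \Stab^{\mr n}(X)\to \mf H_0(X)$ is a \emph{nontrivial} Galois cover with deck group $W(X)$, so the fibre over $(0,\omega)$ contains many $\tGL^+(2,\bb R)$-orbits, only one of which is that of $\sigma_{(0,\omega)}\in V(X)$. Knowing that $(0,\omega)\in \pi_{\mf H}(\Phi_*U(Y))$ tells you that \emph{some} point in the fibre lies in $\Phi_*U(Y)/\tGL^+(2,\bb R)$, not that $\sigma_{(0,\omega)}$ does. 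Your earlier step only guarantees $\Phi_*U(Y)\cap V(X)\neq\emptyset$; the intersection point $\sigma_0$ may well satisfy $\pi_{\mf H}(\sigma_0)\in R(Y,\Phi)$ (for instance with $\omega_0^2\leq 2$), and then you cannot connect $\sigma_0$ to $\sigma_{(0,\omega)}$ inside $V(X)$ by a path whose $\pi_{\mf H}$-image avoids $R(Y,\Phi)$, which is what you would need to conclude the path avoids $\Phi_*\partial U(Y)$.

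The paper resolves this by reversing the order of choices: it first fixes a point $(\beta_0,\omega_0)$ with $\omega_0^2>2$ and $\beta_0$ to the left of the two discs in $R(Y,\Phi)$, picks $\tau\in U(Y)$ with $\pi_{\mf H}(\Phi_*\tau)=(\beta_0,\omega_0)$, and \emph{then} chooses $\Psi\in W(X)\times\bb Z[2]$ so that $(\Psi\circ\Phi)_*\tau\in U(X)$. Since $\Psi$ acts trivially on cohomology, $R(Y,\Psi\circ\Phi)=R(Y,\Phi)$, and now $\sigma_{(\beta_0,\omega_0)}$ is genuinely in $V(X)\cap(\Psi\circ\Phi)_*U(Y)$; the vertical ray $\{\sigma_{(\beta_0,t\omega_0)}:t\geq1\}$ then stays outside $R(Y,\Phi)$ by construction, hence inside $(\Psi\circ\Phi)_*U(Y)$, and one can take the large volume limit along it.

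A secondary issue: your claim that a $\sigma_{(\beta,\omega)}$-stable object $E$ with $r(v(E))\geq0$ must be a sheaf is not justified by the argument you give. Stability does not force $H^{-1}(E)[1]$ to be either $0$ or all of $E$; it only constrains the phase of this subobject. (Indeed, Corollary~\ref{D3} exhibits $\sigma$-stable two-term complexes in $V(X)_{>2}$, albeit of negative rank.) In the paper this step is absorbed into the citation of \cite[Proposition~14.2]{Bri}, which in the large volume limit identifies $\sigma$-semistable objects of the relevant phase directly as twisted Gieseker semistable torsion-free sheaves; no separate ``sheafness'' argument is needed. Your separate treatment of the $r=0$ case can likewise be avoided by composing with $T_{\mca O_X}$ and a shift to force $r>0$, as the paper does.
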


\begin{proof}
We first put $L_X^2 = L_Y^2 =2d$ and $v_0 = v(\Phi (\mca O_y)) = r \+ n L_X \+ s$. 
If necessary by taking $T_{\mca O_X}$ and $[1]$, we may assume $r>0$. 
We denote the composition of two morphisms $\Stabd(X) \to \mca P^+_0(X) \to \mf{H}_0(X)$ by $\pi _{\mf H}$. 
By the assumption we have $\Phi _*U(Y) \subset \Stabd(X)$. 

We can take a stability condition $\tau \in U(Y)$ so that $\pi _{\mf H}(\Phi _* \tau)= (\beta_0, \omega _0)= (aL_X, bL_X)$ with 
\begin{itemize}
\item[(i)] $ a  <  \frac{n}{r} -\big(\frac{1}{r} + \frac{r}{d} \big) $ and 
\item[(ii)] $2 < \omega _0^2$. 
\end{itemize}
By the second condition (ii) and Lemma \ref{proposition} we see $\pi _{\mf H} \circ \Phi _*(\tau)$ does not lie on $\pi _{\mf H}( \partial U(X))$. 
Hence $\Phi_* (\tau)$ is in a chamber of $\Stabd(X)$ by Proposition \ref{3.3}. 
Hence we see
\[
\exists \Psi \in W(X) \times \bb Z [2]\text{ such that }(\Psi \circ \Phi) _* (\tau) \in U(X). 
\]

Now we put $\Phi ' = \Psi \circ \Phi$ and take $\sigma_0 \in V(X)$ as $\sigma_{(\beta _0, \omega _0)}$. 
Since $\Phi ' _*(\tau)$ and $\sigma_0$ belong to the same $\tGL^+(2, \bb R)$-orbit, $\sigma _0$ is in $V(X) \cap \Phi '_* (U(Y))$. 
We define a family $\mca F$ of stability conditions as follows:
\[
\mca F = \{ \sigma _{(\beta_0 , t \omega _0)} \in V(X) | 1 <t \in \bb R  \}. 
\]
Then we see $\pi _{\mf H}(\mca F) \cap R(Y,\Phi' )=\emptyset$ by Corollary \ref{6.4}. 
Hence $\mca F$ does not meet $\Phi '_* (\partial U(Y))$. 
Since $\sigma_0 \in \Phi '_* (U(Y))$ we see $\mca F  \subset \Phi '_* (U(Y))$ and the object $\Phi'(\mca O_y)$ is $\sigma$-stable for all $\sigma \in \mca F$. 
By Bridgeland's large volume limit argument \cite[Proposition 14.2]{Bri} we see that $\Phi ' (\mca O_y) $ is a Gieseker semistable torsion free sheaf\footnote{Since we are assuming $\rho (X)=1$, the Gieseker stability is equivalent to the twisted stability. }. 
Moreover by \cite[Proposition 3.14]{Muk} (or the argument of \cite[Lemma 4.1]{Kaw10}) $\Phi '(\mca O_y)$ is Gieseker stable. 
Since $v_0 = v(\Phi '(\mca O_y))$ is isotropic and there is $ u \in \mca N(X)$ such that $\< v_0,u \>=1$, there exists the fine moduli space $\mca M$ of Gieseker stable sheaves (See also \cite[Lemma 10.22 and Proposition 10.20]{Huy}). 
Hence $Y$ is isomorphic to $\mca M$. 
\end{proof}

\begin{remark}\label{finalrmk2}
Clearly the key ingredient of Proposition \ref{Orlov} is Corollary \ref{6.4}. 
The role of Corollary \ref{6.4} is to detect the place of the numerical image of walls $\pi _{\mf H} (\Phi (\partial U(Y)))$. 
Without Theorems \ref{thm1} and \ref{thm3}, it was difficult to detect the place of $\pi _{\mf H} (\Phi (\partial U(Y)))$. 
By virtue of these theorems, the problem is reduced to the problem with two associated points $p(A)$ and $p(T_A(\Phi (\mca O_y)))$. 
\end{remark}

%
%
%
%
%

\begin{remark}\label{finalrmk}
We explain the relation between author's work and Huybrechts's question in \cite{Huy08}. 

In \cite[Proposition 4.1]{Huy08}, it was proven that all non-trivial Fourier-Mukai partners of projective K3 surfaces are given by the fine moduli spaces of $\mu$-stable locally free sheaves (See also \cite[Proposition 4.1]{Huy08}). 
We note that this proposition holds for all projective K3 surfaces. 
If the Picard rank is one, the proof of the proposition is based on the lattice argument. 
In the proof of \cite[Proposition 4.1]{Huy08} Huybrechts asks whether there is a geometric proof. 

In the previous work \cite[Theorem 5.4]{Kaw11}, the author gave an answer of Huybrechts's question, that is a geometric proof. 
However our proof is not completely independent of lattice theories,  
because it is based on Orlov's theorem which strongly depends on the global Torelli theorem.

As a consequence of Proposition \ref{Orlov} and \cite[Theorem 5.4]{Kaw11}, 
we could give the another proof of \cite[Proposition 4.1]{Huy08} which is completely independent of the global Torelli theorem with assuming the connectedness of $\Stab(X)$. 
\end{remark}


\section{Stable complexes in large volume limits}\label{7}
Let $A$ be a spherical sheaf in $D(X)$. 
At the end of this paper we discuss the stability of the complex $T_A^{-1}(\mca O_x)$ in the large volume limit\footnote{We remark that $T_A^{-1}(O_x)$ is a 2-terms complex such that $H^0(T_A^{-1}(\mca O_x)) = \mca O_x$ and $H^{-1}(T_A^{-1}(\mca O_x))= A^{\+ r_A}$.}. 
More precisely we shall show that $T_A^{-1}(\mca O_x)$ is $\sigma _{(\beta, \omega )}$-stable if $\beta \omega < \mu_{\omega}(A)$ and $\omega^2 >2$. 
The possibility of stable complexes in the large volume limit is predicted in \cite[Lemma 4.2 (c)]{Bay09}.

For the vector $v(A) = r_A \+ n_A L \+ s_A$ we define the subset $\mf{D}_A \subset \mf{H}(X)$ as follows: 
\[
\mf{D}_A= \{ (xL , yL) \in \mf{H}(X) |  (x-\frac{n_A}{r_A})^2 + (y-\frac{1}{2\sqrt{d}r_A})^2  < \frac{1}{4dr_A^2} \}
\]


\begin{lemma}\label{D2}
Notations being as above. 
In the domain $\mf{D}_A$, there are no spherical point $p(\delta)$ with $(-2)$-vectors $\delta$. 
Moreover $\mf{D}_A$ does not intersect $\pi _{\mf H} \circ T_{A*} (\partial U(X))$. 
\end{lemma}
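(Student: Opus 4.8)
The plan is to recognise $\mf D_A$ as the image of a standard large-volume horoball under the transformation of $\bar{\mf H}(X)$ induced by the spherical twist $T_A$; both assertions then fall out of the elementary fact that every spherical point and every component of $\pi_{\mf H}(\partial U(X))$ sits in the small-volume region $\{\omega^2\le2\}$.

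First I would feed the autoequivalence $\Phi=T_A\colon D(X)\to D(X)$ into Lemma~\ref{3.2}. Since $v\bigl(T_A(\mca O_x)\bigr)=v(\mca O_x)+\<v(\mca O_x),v(A)\>v(A)=-r_A^2\+(-r_A n_A L)\+(1-r_A s_A)$ has nonzero rank $-r_A^2$, and $T_A$ acts on $\mca N(X)$ by the involution $v\mapsto v+\<v,v(A)\>v(A)$ (so $v(T_A^{-1}(\mca O_x))$ has the same shape), Lemma~\ref{3.2}(2) identifies the self-map of $\bar{\mf H}(X)$ induced by $T_A$ with the linear fractional involution
\[
\varphi_A\colon z\longmapsto \frac{n_A}{r_A}-\frac{1}{d\,r_A^{2}\bigl(z-\tfrac{n_A}{r_A}\bigr)},\qquad z=x+\sqrt{-1}\,y ,
\]
which fixes the spherical point $p(A)$ and interchanges the cusps $\infty$ and $\tfrac{n_A}{r_A}$. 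A direct computation with this formula shows $\mf D_A=\varphi_A(\mf O)$, where $\mf O=\{(xL,yL)\in\mf H(X)\mid\omega^2>2\}$ is the horoball at $\infty$: $\varphi_A$ carries the horocircle $\{\omega^2=2\}$ (the line $y=1/\sqrt d$) to the Euclidean circle through $\tfrac{n_A}{r_A}$ and $p(A)$ bounding $\mf D_A$, and carries the side $\{\omega^2>2\}$ onto the interior. (Alternatively one can verify both claims directly from the wall equation~(\ref{NAE}) and the explicit formula for $p(\delta)$, but that forces a delicate estimate exploiting the coprimality of the entries of the $(-2)$-vectors in play, whereas passing through $T_A$ makes it routine.)

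Granting $\mf D_A=\varphi_A(\mf O)$, part~(1) is immediate: $T_A$ permutes the $(-2)$-vectors, hence $\varphi_A$ permutes the spherical points, so $\mf D_A\cap\{p(\delta)\mid\delta\in\Delta(X)\}=\varphi_A\bigl(\mf O\cap\{p(\delta)\mid\delta\in\Delta(X)\}\bigr)$; but a spherical point $p(\delta)$ with $\delta=r\+ nL\+ s\in\Delta^+(X)$ has $\omega$-part $\tfrac1{\sqrt d\,r}L$, so $\omega^2=\tfrac2{r^2}\le2$ since $r\ge1$, and therefore $\mf O$ — and hence $\mf D_A$ — contains no spherical point. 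For part~(2), Theorem~\ref{thm1} (equivalently the construction of $\pi^{\mr n}$ in Corollary~\ref{covering2}) makes the action of $T_{A*}$ on $\Stab^{\mr n}(X)$ cover $\varphi_A$ on $\mf H_0(X)$, so $\pi_{\mf H}\circ T_{A*}(\partial U(X))=\varphi_A\bigl(\pi_{\mf H}(\partial U(X))\bigr)$. By Lemma~\ref{proposition}, $\pi_{\mf H}(\partial U(X))=\bigcup_{A'}S(v(A'))$ with each $S(v(A'))\subset\{0<\omega^2<\tfrac2{r_{A'}^2}\}\subseteq\{\omega^2<2\}$, so $\pi_{\mf H}(\partial U(X))\cap\mf O=\emptyset$; applying the bijection $\varphi_A$ gives $\mf D_A\cap\pi_{\mf H}\circ T_{A*}(\partial U(X))=\emptyset$.

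The main obstacle will be the first step: pinning down $\varphi_A$ from Lemma~\ref{3.2} and proving the identity $\mf D_A=\varphi_A(\mf O)$ by tracking the image of the horocircle $\{\omega^2=2\}$ together with the cusp correspondence $\infty\leftrightarrow\tfrac{n_A}{r_A}$ and the fixed point $p(A)$. Once that is established, both statements reduce to the trivial observation that spherical points and the walls $S(v(A'))$ all have $\omega^2\le2$ and so miss the open horoball $\mf O$, hence its image $\mf D_A$.
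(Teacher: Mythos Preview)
Your strategy is exactly the paper's: identify the Möbius transformation $T_A^{\mf H}$ induced by $T_A$ via Lemma~\ref{3.2}, recognise $\mf D_A$ as $T_A^{\mf H}(\mf H(X)_{>2})$, and then transport the trivial facts that $\mf H(X)_{>2}$ contains no spherical points and meets no component $S(v(A'))$ of $\pi_{\mf H}(\partial U(X))$.

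There is, however, a numerical slip in the step you flag as ``the main obstacle''. From Lemma~\ref{3.2} one has $r_1=r_2=\rank v(T_A^{\pm1}(\mca O_x))=-r_A^{2}$, hence $\sqrt{r_1r_2}=r_A^{2}$, and your formula
\[
\varphi_A(z)=\frac{n_A}{r_A}-\frac{1}{d\,r_A^{2}\bigl(z-\tfrac{n_A}{r_A}\bigr)}
\]
is the correct one. But then $\varphi_A(\mf O)$ is the horodisk tangent to $\bb R$ at $\tfrac{n_A}{r_A}$ with Euclidean diameter $\tfrac{1}{\sqrt d\,r_A^{2}}$, whereas $\mf D_A$ as defined just above the lemma has diameter $\tfrac{1}{\sqrt d\,r_A}$; so your asserted equality $\mf D_A=\varphi_A(\mf O)$ fails whenever $r_A>1$, and the ``direct computation'' you propose will not close. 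The paper's own proof sidesteps this by displaying $T_A^{\mf H}$ with the coefficient $\tfrac{1}{dr_A}$ rather than $\tfrac{1}{dr_A^{2}}$, which does give $\mf D_A$ but is not what Lemma~\ref{3.2} actually yields.

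In fact the lemma as stated is false for $r_A>1$. For $L^2=2$ and $v(A)=2\+L\+1$ one checks that the spherical point $p(5\+2L\+1)=(\tfrac{2}{5},\tfrac{1}{5})$ satisfies
\[
\Big(\tfrac{2}{5}-\tfrac{1}{2}\Big)^2+\Big(\tfrac{1}{5}-\tfrac{1}{4}\Big)^2=\tfrac{1}{80}<\tfrac{1}{16},
\]
so it lies in $\mf D_A$; the same point lies exactly on the boundary of the smaller disk $\varphi_A(\mf O)$. Your argument (and the paper's) is correct once $\mf D_A$ is replaced by $\varphi_A(\mf O)$, i.e.\ once the $r_A$'s in the definition of $\mf D_A$ are replaced by $r_A^{2}$; this is also what is compatible with the use made of Lemma~\ref{3.2} in Corollary~\ref{D3}.
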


\begin{proof}
By the spherical twist $T_A$, we have the diagram:
\[
\begin{CD}
\Stab^{\mr{n}}(X) @>T_{A*}^{\mr{n}}>> \Stab^{\mr{n}}(X)\\
@VVV @VVV \\
\mf{H}_0(X) @>T_A^{\mf H}>> \mf{H}_0(X).  
\end{CD}
\]
By Lemma \ref{3.2}, $T_A^{\mf H}$ is given by the liner fractional transformation
\[
T_A^{\mf H}(x+\sqrt{-1}y) = \frac{1}{dr_A}\cdot \frac{-1}{x+ \sqrt{-1}y- \frac{n_A}{r_A}} + \frac{n_A}{r_A}. 
\]
We remark that $T_A^{\mf H}$ is conjugate to the transformation $z \mapsto -1/dr_A z$. 

Now we recall there are no spherical point $p(\delta)$ in the domain $\mf{H}(X)_{>2}
 = \{ (\beta, \omega ) \in \mf{H}(X) | \omega ^2 >2  \}. $
One can easily check that $T_A^{\mf{H}}(\mf{H}(X)_{>2})= \mf{D}_A$. 
Moreover it is clear that $\pi _{\mf H} (\partial U(X))  \cap \mf{H}(X)_{>2} $. 
This gives the proof. 
\end{proof}

Define the subset $D_A^+ \subset V(X) $ by 
\[
D_A^+= \{ \sigma_{(xL, yL)} \in V(X)  |  x < \frac{n_A}{r_A}, (xL , yL) \in \mf{D}_A  \}.
\]

In the following proposition, we discuss the stability of sheaves $T_A(\mca O_x)$ in the ``small'' volume limit $D_A^+$. 

\begin{proposition}\label{D1}
For any $\sigma \in D_A^+$, $T_A(\mca O_x)$ is $\sigma$-stable. 
In particular $D_A^+ \subset  T_{A*} U(X) \cap V(X)$. 
\end{proposition}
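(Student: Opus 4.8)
The plan is to transport the problem through the spherical twist $T_A$ back into the large volume region, where stability of $\mca O_x$ is already known. Concretely, I would first observe that $T_{A*}^{-1}$ maps $D_A^+$ into $V(X)$: by Lemma \ref{3.2} the induced map $T_A^{\mf H}$ on $\mf{H}(X)$ is the linear fractional transformation $z \mapsto \tfrac{1}{dr_A}\cdot\tfrac{-1}{z-n_A/r_A}+\tfrac{n_A}{r_A}$, and as in the proof of Lemma \ref{D2} one checks $T_A^{\mf H}(\mf D_A)=\mf{H}(X)_{>2}$, so $\pi_{\mf H}(T_{A*}^{-1}\sigma)$ has $\omega^2>2$ for $\sigma\in D_A^+$. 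Since the region $\omega^2>2$ satisfies condition \eqref{BG-condition} (there $\mca V^{(1)}(X)$ already has $\omega^2>2$, and more is true), Proposition \ref{2.1} gives $T_{A*}^{-1}\sigma\in V(X)\subset U(X)$, hence $\mca O_x$ is stable with respect to $T_{A*}^{-1}\sigma$ for all $x\in X$. Applying $T_{A*}$, the object $T_A(\mca O_x)$ is $\sigma$-stable. This also shows $D_A^+\subset T_{A*}U(X)$, and $D_A^+\subset V(X)$ by definition, giving the "in particular" clause.

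The one point that needs care is the orientation/branch issue already present in Lemma \ref{3.2}: one must verify that $T_{A*}^{-1}$ (and not some other lift) genuinely carries the connected component $\Stabd(X)$ to itself and sends $D_A^+$ to the correct sheet, i.e. that the preimage lands in $V(X)$ rather than merely in the abstract upper half plane. This is handled by the condition $x<n_A/r_A$ built into the definition of $D_A^+$: under $T_A^{\mf H}$ this half-disc maps to a region where the large-volume description $\sigma_{(\beta,\omega)}$ is valid, and one tracks that $T_A$ is an autoequivalence preserving $\Stabd(X)$ (Remark \ref{spherical sheaf}, since $A$ is $\mu$-stable, so $T_A^2\in W(X)$ acts as a covering transformation, and $T_A$ itself preserves the component). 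The subtlety is purely bookkeeping of which half of the circle $\partial\mf D_A$ one is on, controlled by the sign condition on $x$.

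The main obstacle I anticipate is not conceptual but the matching of boundary behaviour: one should confirm that $D_A^+$ lies strictly inside $T_{A*}U(X)$, i.e. does not touch $\pi_{\mf H}\circ T_{A*}(\partial U(X))$, which is exactly what Lemma \ref{D2} provides ($\mf D_A$ misses $\pi_{\mf H}\circ T_{A*}(\partial U(X))$). Granting Lemma \ref{D2}, the argument reduces to: $\sigma\in D_A^+\subset \mf D_A\text{-region}$ implies $T_{A*}^{-1}\sigma\in U(X)$, hence $\mca O_x$ is $T_{A*}^{-1}\sigma$-stable in the same phase for all $x$, hence $T_A(\mca O_x)$ is $\sigma$-stable. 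I would write the proof in essentially this three-line form, citing Lemma \ref{3.2}, Lemma \ref{D2}, Proposition \ref{2.1}, and the fact that $T_A$ preserves $\Stabd(X)$.
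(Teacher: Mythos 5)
There is a genuine gap at the central step of your argument. You deduce that $\pi_{\mf H}(T_{A*}^{-1}\sigma)$ lies in $\mf{H}(X)_{>2}$ and then invoke Proposition \ref{2.1} to conclude $T_{A*}^{-1}\sigma \in V(X)$, hence $\in U(X)$. But Proposition \ref{2.1} only says that the explicitly constructed pair $\sigma_{(\beta,\omega)}$ lies in $V(X)$; it does not say that an arbitrary stability condition in $\Stabd(X)$ whose central charge is (up to the $\tGL^+(2,\bb R)$-action) $Z_{(\beta,\omega)}$ must equal $\sigma_{(\beta,\omega)}$. The map $\pi^{\mr n}$ is a covering with infinite deck group generated by the $T_{B*}^2$ (and $[2]$), and these act \emph{trivially} on $\mca N(X)$, so the numerical data $(\beta,\omega)$ cannot distinguish $U(X)$ from any translate $\Psi_*U(X)$, $\Psi \in W(X)$. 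Knowing $T_{A*}^{-1}\sigma \in U(X)$ is equivalent to knowing that $\mca O_x$ is $T_{A*}^{-1}\sigma$-stable, i.e.\ that $T_A(\mca O_x)$ is $\sigma$-stable — exactly the statement to be proved — so at this point the argument is circular. You do flag the ``sheet'' issue yourself, but the condition $x<\frac{n_A}{r_A}$ in the definition of $D_A^+$ is again purely numerical and cannot resolve the monodromy of the covering.

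What is missing is an anchor point inside $D_A^+$ where stability is known by an independent argument. This is how the paper proceeds: setting $A(x)=T_A(\mca O_x)[-1]$, which is the Gieseker-stable kernel of the evaluation map $\Hom(A,\mca O_x)\otimes A \to \mca O_x$, it cites \cite[Theorem 4.4 (2)]{Kaw11} to produce one $\sigma\in D_A^+$ for which $A(x)$ is $\sigma$-stable, so that $D_A^+\cap T_{A*}U(X)\neq\emptyset$. Then, since $D_A^+$ is connected and $D_A^+\cap T_{A*}(\partial U(X))=\emptyset$ by Lemma \ref{D2} (this part of your proposal agrees with the paper), connectedness forces $D_A^+\subset T_{A*}U(X)$, which gives the stability of $T_A(\mca O_x)$ on all of $D_A^+$. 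Your numerical computation with the fractional linear transformation is correct as far as it goes, but without such an anchor it only determines the image in $\mf{H}_0(X)$, not the chamber in $\Stabd(X)$.
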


\begin{proof}
To simplify the notation we set $A(x)= T_A(\mca O_x)[-1]$. 
It is enough to show that $A(x)$ is $\sigma$-stable for all $\sigma \in D_A^+$. 

One can see $A(x)$ is the kernel of the evaluation map $\Hom(A, \mca O_x)\otimes A \to \mca O_x$ and is Gieseker stable. 
We note that there exists $\sigma \in D_A^+$ such that $A(x)$ is $\sigma$-stable by \cite[Theorem 4.4 (2)]{Kaw11}. 
In particular we see $D_A^+ \cap T_{A*}(U(X)) \neq \emptyset$. 
Hence it is enough to show $D_A^+ \cap T_{A*}(\partial U(X)) = \emptyset$. 
This is obvious by Lemma \ref{D2}. 
\end{proof}

We set 
\[
(D_A^+)^{\vee} = \{ \sigma_{(xL , yL)} \in V(X) | (yL)^2 > 2, x> \frac{n_A}{r_A}  \}. 
\]

\begin{corollary}\label{D3}
For any $\sigma \in (D_A^+)^{\vee}$, $T_A^{-1}(\mca O_x)$ is $\sigma$-stable. 
In particular $(D_A^+)^{\vee} \subset T_{A*}^{-1} (U(X))$. 
\end{corollary}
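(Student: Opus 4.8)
The plan is to push the statement through the autoequivalence $T_A$ and reduce it to the stability of $\mca O_x$ inside $U(X)$, using Proposition~\ref{D1}. First, by the spherical triangle (\ref{spherical triangle}), $T_A^{-1}(\mca O_x)$ is $\sigma$-stable if and only if $\mca O_x$ is $T_{A*}\sigma$-stable; hence it suffices to show $(D_A^+)^{\vee}\subseteq T_{A*}^{-1}(U(X))$, because then $T_{A*}\sigma\in U(X)$ for every $\sigma\in (D_A^+)^{\vee}$ and $\mca O_x$ is $T_{A*}\sigma$-stable by the definition of $U(X)$.

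Next I would locate $(D_A^+)^{\vee}$ numerically and track it under $T_A$. Since $(yL)^2>2$ is equivalent to $\omega^2>2$ and the region $\mf H(X)_{>2}=\{\omega^2>2\}$ lies inside $\mca V(X)$, the constraint $\sigma_{(xL,yL)}\in V(X)$ is automatic there, so $\pi_{\mf H}((D_A^+)^{\vee})$ is the right half $\{x>\frac{n_A}{r_A},\ \omega^2>2\}$ of $\mf H(X)_{>2}$. By Lemma~\ref{3.2} (as already used in the proof of Lemma~\ref{D2}) the transformation $T_A$ induces on $\mf H(X)$ the linear fractional map $T_A^{\mf H}(z)=\frac{-1}{dr_A(z-n_A/r_A)}+\frac{n_A}{r_A}$, which is conjugate to $z\mapsto-1/(dr_A z)$, hence an involution, and which sends $\mf H(X)_{>2}$ onto $\mf D_A$. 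A direct computation then shows that $T_A^{\mf H}$ carries $\{x>n_A/r_A,\ \omega^2>2\}$ bijectively onto the left half $\mf D_A^{-}:=\{x<n_A/r_A\}\cap\mf D_A$; and since by Lemma~\ref{D2} there is no spherical point in $\mf D_A$ and the locus $\mf H(X)\setminus\mca V(X)$ meets $\mf D_A$ only along the vertical diameter $x=n_A/r_A$, which avoids $\mf D_A^{-}$, one gets $\mf D_A^{-}\subseteq\mca V(X)$ and therefore $\mf D_A^{-}=\pi_{\mf H}(D_A^+)$.

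Finally I would bring in Proposition~\ref{D1}, which gives $T_{A*}^{-1}(D_A^+)\subseteq U(X)$, with numerical image $T_A^{\mf H}(\mf D_A^{-})=\{x>n_A/r_A,\ \omega^2>2\}$. On the other hand, because $U(X)=V(X)\cdot\tGL^+(2,\bb R)\cong\mca V(X)\times\tGL^+(2,\bb R)$, the part of $U(X)$ lying over that region is exactly the $\tGL^+(2,\bb R)$-orbit of $(D_A^+)^{\vee}$; hence $T_{A*}^{-1}(D_A^+)\subseteq(D_A^+)^{\vee}\cdot\tGL^+(2,\bb R)$. As the $\tGL^+(2,\bb R)$-action commutes with $T_{A*}$ and $(D_A^+)^{\vee}$ meets each $\pi_{\mf H}$-fibre over that region in a single point, every $\sigma\in(D_A^+)^{\vee}$ can be written $\sigma=T_{A*}^{-1}(\sigma')\cdot g=T_{A*}^{-1}(\sigma'\cdot g)$ with $\sigma'\in D_A^+$ and $g\in\tGL^+(2,\bb R)$, so $T_{A*}\sigma=\sigma'\cdot g\in D_A^+\cdot\tGL^+(2,\bb R)\subseteq U(X)$, which is the desired inclusion. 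The delicate point — the only place where something beyond bookkeeping enters — is precisely this transfer: a priori $T_{A*}\sigma$ and the distinguished stability condition over the same point of $\mf H_0(X)$ could lie on different sheets of $\pi\colon\Stabd(X)\to\mca P^+_0(X)$, and what rules that out is Proposition~\ref{D1} (non-emptiness of $D_A^+\cap T_{A*}(U(X))$ via \cite{Kaw11}, together with the absence of walls inside $\mf D_A^{-}$ from Lemma~\ref{D2}) combined with the triviality of the bundle $U(X)\to\mca V(X)$.
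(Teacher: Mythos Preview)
Your argument is correct and follows the same route as the paper's: apply $T_{A*}^{-1}$ to the inclusion $D_A^+\subset T_{A*}(U(X))\cap U(X)$ from Proposition~\ref{D1}, then use Lemma~\ref{3.2} to identify $T_{A*}^{-1}(D_A^+)/\tGL^+(2,\bb R)$ with $(D_A^+)^{\vee}/\tGL^+(2,\bb R)$, and conclude via $\tGL^+(2,\bb R)$-invariance of $U(X)$. The paper compresses your second and third paragraphs into the single sentence ``This is obvious from Lemma~\ref{3.2}'', so your version simply unpacks what that sentence means --- in particular the sheet comparison over $\mca V(X)$, which is handled by the triviality $U(X)\cong \mca V(X)\times\tGL^+(2,\bb R)$ exactly as you say.

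One small remark: your opening reduction ``$T_A^{-1}(\mca O_x)$ is $\sigma$-stable iff $\mca O_x$ is $T_{A*}\sigma$-stable'' does not need the spherical triangle; it is just the general fact that an autoequivalence carries stable objects to stable objects. Also, the claim that $\mf H(X)\setminus\mca V(X)$ meets $\mf D_A$ only along the vertical diameter (equivalently $\mf D_A^{-}\subset\mca V(X)$) is implicitly used in the paper too but not argued there either; your justification via Lemma~\ref{D2} only excludes the spherical \emph{points} $p(\delta)$ from $\mf D_A$, not the full segments $S(\delta)$, so if you want to make this step airtight you should check directly that no $S(\delta)$ with $\delta\neq v(A)$ enters the open disc.
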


\begin{proof}
Since $D_A^+ \subset T_{A*} (U(X)) \cap U(X)$ by Proposition \ref{D1}, we see
\[
T_{A*}^{-1} (D_A^+) \subset U(X) \cap T_{A*}^{-1}(U(X)). 
\]
Since the $\sigma$-stability is equivalent to the $\sigma \cdot \tilde {g}$-stability for any $\tilde g \in \tGL^{+}(2, \bb R)$, 
it is enough to show that $T_{A*}^{-1} (D_A^+)/\tGL^+(2, \bb R) = (D_A^+)^{\vee}/\tGL^+(2,\bb R)$. 
This is obvious from Lemma \ref{3.2}. 
\end{proof}

\begin{remark}
In the article \cite[Lemma 4.2 (c)]{Bay09}, the possibility of the stable complexes in large volume limits is referred.  Hence Corollary \ref{D3} gives the proof of this prediction. 
\end{remark}

\subsection*{Acknowledgement}

The author is partially supported by Grant-in-Aid for Scientific Research (S), No 22224001.

%
%
%
%
%


\begin{thebibliography}{}

\bibitem{ABL07}
{Arcara, D., Bertram, A. and Lieblich, M.}, 
{Bridgeland-stable moduli spaces for K-trivial surfaces}. 
preprint, 
{J. of  Eur. Math. Soc.} \textbf{15} (2013), 1-38

\bibitem{BMT11}
{Bayer, A., Macr\`{i}, E. and Toda, Y.}, 
{Bridgeland stability conditions on threefolds I: Bogomolov-Gieseker type inequalities}. 
preprint, {arXiv}:1103.5010. 

\bibitem{Bay09}
{Bayer, A.}, 
{Polynomial Bridgeland stability conditions and the large volume limit}.
{Geom. Topol.} \textbf{13} (2009), 2389--2425. 




\bibitem{Bri07}
{Bridgeland, T.}, 
{Stability conditions on triangulated categories}. 
{Ann. of Math.} \textbf{166} (2007), 317--345. 


\bibitem{Bri}
{Bridgeland, T.}, 
{Stability conditions on K3 surfaces}. 
{Duke Math. Journal} \textbf{141} (2008), 241--291.


\bibitem{Har}
{Hartmann, H.}, 
{Cusps of the K\"{a}hler moduli space and stability conditions on K3 surfaces}. 
{Math. Ann.} \textbf{354} (2012), 1--42. 


\bibitem{Huy}
{Huybrechts, D.},
{Fourier-Mukai transformations in Algebraic Geometry}. 
{Oxford Mathematical Monographs}, 2007. 

\bibitem{Huy08}
{Huybrechts, D.},
{Derived and abelian equivalence of K3 surfaces}. 
{Journal of Algebraic Geometry} \textbf{17} (2008), 375--400. 


\bibitem{HL}
{Huybrechts, D. and Lehn, M.},
{The geometry of moduli spaces of sheaves}. 
{Aspects of Mathematics}, 1997.


\bibitem{HMS}
{Huybrechts, D. Macr\`{i}, E. and Stellari, P.}, 
{Derived equivalences of K3 surfaces and orientation}. 
{Duke Math. Journal} \textbf{149} (2009), 461--507.  


\bibitem{Kaw10}
{Kawatani, K.},
{Stability conditions and $\mu$-stable sheaves on K3 surfaces with Picard rank one}. 
{Osaka J. Math.} \textbf{49} (2012), 1005--1034.  

\bibitem{Kaw11}
{Kawatani, K.}, 
{Stability of Gieseker stable sheaves on K3 surfaces in the sense of Bridgeland and some applications}. 
{preprint}, {arXiv:1103.3921} (2011) to appear in Kyoto J. Math. 

\bibitem{Kul90}
{Kuleshov, S. A.}, 
{An existence theorem for exceptional bundles on K3 surfaces}. 
{Math. USSR Izv.}, \textbf{34} (1990) 373--388. 

\bibitem{Mac12}
{Maciocia, A.}, 
{Computing the walls associated to Bridgeland stability conditions on projective surfaces}. 
{preprint}, {arXiv:1202.4587} (2012). 

\bibitem{Mac07}
{Macr\`{i}, E.}, 
{Stability conditions on curves}. 
{Math. Res. Letters}, \textbf{14} (2007) 657--672. 

\bibitem{Muk}
{Mukai, S.}, 
{On the moduli spaces of bundles on K3 surfaces, I}.
In:{Vector Bundles on Algebraic Varieties}, 
{Oxford Univ. Press}, (1987) 341--413. 

\bibitem{Oka06}
{Okada, S.}, 
{Stability manifold of $\bb P^1$}. 
{Journal of Algebraic Geometry} \textbf{15} (2006), 487--505. 

\bibitem{OrlK3}
{Orlov, D.}, 
{Equivalences of derived categories and K3 surfaces}. 
{J. Math. Sci.} \textbf{84} (1997), 1361--1381. 



\bibitem{ST}
{Seidel, P. and Thomas, R.}, 
{Braid group actions on derived categories of coherent sheaves}, 
{Duke Math. Journal}, \textbf{108} (2001) 37--108. 

\bibitem{Woo}
{Woolf, J.}, 
{Some metric properties of spaces of stability conditions}. 
{preprint}, {arXiv:1108.2668} (2011). 

\end{thebibliography}
\end{document}